\numberwithin{equation}{section}
\newcommand{\1}{\mathbf {1}}
\newcommand{\h}{{\mathfrak h}}
\newcommand{\mfS}{{\mathfrak S}}
\newcommand{\id}{\mathrm{id}}
\newcommand{\la}{\langle}
\newcommand{\ra}{\rangle}
\DeclareMathOperator{\Aut}{Aut}
\newtheorem{theorem}{Theorem}[section]
\newtheorem{proposition}[theorem]{Proposition}
\newtheorem{lemma}[theorem]{Lemma}
\newtheorem{corollary}[theorem]{Corollary}
\theoremstyle{definition}
\theoremstyle{remark}
\newtheorem{remark}[theorem]{Remark}
\newcommand{\tr}{\mathrm{tr}}
\newcommand{\w}{\omega}
\DeclareMathOperator{\Irr}{Irr\,}
\DeclareMathOperator{\qdim}{{\rm qdim}}
\newcommand{\fusion}[3]{{\binom{#3}{#1\;#2}}}
\newcommand{\huto}[1]{\mbox{\boldmath $#1$}}
\newcommand\R{\mathbb{R}}
\newcommand\Z{\mathbb{Z}}
\newcommand\Zpos{\Z_{\geq0}}
\newcommand\Zplus{\Z_{>0}}
\newcommand\Q{\mathbb{Q}}
\newcommand\C{\mathbb{C}}
\newcommand\N{\mathbb{N}}
\newcommand{\NO}{\,{\raise0.25em\hbox{$\mathop{\hphantom {\cdot}}\limits^{_{\circ}}_{^{\circ}}$}}\,}
\begin{document}

\title[Extensions of cyclic orbifold $V_{\sqrt{2}A_{p-1}^d}^{\langle \widehat{\sigma} \rangle}$]
{Extensions of tensor products of $\Z_p$-orbifold models of the lattice vertex operator 
algebra $V_{\sqrt{2}A_{p-1}}$}

\author[T. Abe]{Toshiyuki Abe}
\address{Faculty of Education, 
Ehime University, Matsuyama, Ehime 790-8577, Japan}
\email{abe.toshiyuki.mz@ehime-u.ac.jp}

\author[C.H. Lam]{Ching Hung Lam}
\address{Institute of Mathematics, Academia Sinica, Taipei 115, Taiwan}
\email{chlam@math.sinica.edu.tw}

\author[H. Yamada]{Hiromichi Yamada}
\address{Department of Mathematics, Hitotsubashi University, Kunitachi,
Tokyo 186-8601, Japan, 
Institute of Mathematics, Academia Sinica, Taipei 115, Taiwan}
\email{yamada.h@r.hit-u.ac.jp}

\thanks{T.A. is partially supported by JSPS fellow 15K04823. 
C.L. is partially supported by MoST grant 104-2115-M-001-004-MY3 of Taiwan. }

\subjclass[2010]{17B69, 17B65}

\keywords{orbifold construction, Leech lattice, Moonshine vertex operator algebra}

\begin{abstract}
Let $p$ be an odd prime and let $\widehat{\sigma}$ be an order $p$ automorphism of 
$V_{\sqrt{2}A_{p-1}}$ which is a lift of a $p$-cycle in the Weyl group 
${\rm Weyl}(A_{p-1})\cong \mfS_p$. 
We study a certain extension $V$ of a tensor product of finitely many copies of 
the orbifold model $V_{\sqrt{2}A_{p-1}}^{\la \widehat{\sigma} \ra}$ and give a criterion for $V$ 
that every irreducible $V$-module is a simple current.  
\end{abstract}

\maketitle

\section{Introduction}
For any positive definite even lattice $L$, one can associate a rational, $C_2$-cofinite  
vertex operator algebra (VOA)  $V_L$ of CFT type. 
The representation theory of $V_L$, including a theory of twisted modules, has been well  
developed (see \cite{BakalovKac04}, \cite{Dong93}, \cite{DongLepowsky93}, \cite{DongLepowsky96}, 
\cite{FLM}, \cite{Lepowsky85}, or \cite{LepowskyLi04} and references therein).    
An isometry $\sigma$ of $L$ can be lifted to an automorphism $\widehat{\sigma}$ of 
$V_L$ of the same order or its twice. 
A construction of irreducible $\widehat{\sigma}$-twisted $V_L$-modules is obtained 
in \cite{Lepowsky85} (see \cite{BakalovKac04}, \cite{DongLepowsky96} also).  

In this paper, we calculate the quantum dimensions of irreducible  
$\widehat{\sigma}$-twisted $V_L$-modules when $\sigma$ is fixed-point-free on $L$, 
that is, $\sigma$ has no nontrivial fixed point in $L$.  
Furthermore, we apply the result to the case $L$ is a certain lattice $M$ and the order of 
$\sigma$ is an odd prime. 
We give a necessary and sufficient condition on the lattice $M$ for which 
every irreducible module of the orbifold model $V_{M}^{\la \widehat{\sigma} \ra}$ 
is a simple current. 
There is a more general result by S. M\"{o}ller \cite{Moller16}, where the modular transformations of 
characters are studied. 
Our settings and arguments in this paper are slightly different from \cite{Moller16}. 

We explain the construction of $M$ in detail. 
Let $N=\sqrt{2}A_{p-1}$ be $\sqrt{2}$ times a root lattice of type $A_{p-1}$. 
Then $N^\circ/N \cong \Z_2^{p-2} \times \Z_{2p}$, where $N^\circ$ is the dual lattice of $N$. 
Assume that $p\geq 3$ is an odd integer. 
Then we have $N^\circ/N \cong \Z_2^{p-1} \times \Z_{p}$. 
The inner product of $N^\circ$   
and the minimum values of square norms of elements in the cosets 
of $N$ in $N^\circ$ give rise to a code structure on $N^\circ/N\cong \Z_2^{p-1} \times\Z_{p}$, 
that is, an inner product and a weight function on $\Z_2^{p-1} \times\Z_{p}$.  
Let $k=\Z_2^{p-1}$ and $l=\Z_p$ be the corresponding subcodes in $N^\circ/N$, 
so that $N^\circ/N = k \times l$. 

For $d\in\Zplus$, let $N^d$ (resp. $(N^\circ)^d$) be a direct sum of $d$ copies of 
$N$ (resp. $N^\circ$). 
Then we can extend the code structure on $N^\circ/N = k\times l$ to 
$(N^\circ)^d/N^d = (N^\circ/N)^d = k^d\times l^d$ naturally. 
There is a one to one   
correspondence between $\Z$-submodules $\mathcal{E}$ of $k^d\times l^d$ 
and sublattices $L_{\mathcal{E}}$ of $(N^\circ)^d$ containing $N^d$. 
Some properties of the lattice $L_{\mathcal{E}}$ will be discussed in Section \ref{Sect4}. 
It turns out that the lattice $L_\mathcal{E}$ is  even, integral and unimodular if and only if the 
corresponding $\Z$-submodules $\mathcal{E}$ is even, self-orthogonal and self-dual, 
respectively.  

Let $\sigma$ be an isometry of $N$ of order $p$ which corresponds to a Coxeter element 
of the Weyl group ${\rm Weyl}(A_{p-1})\cong \mfS_p$. 
Then $\sigma$ is fixed-point-free on $N$. 
Moreover, $\sigma$ induces an isometry of $N^\circ$ and acts naturally on $N^\circ/N = k \times l$. 
In fact, $\sigma$ is fixed-point-free on $k$; on the other hand, all elements in $l$ are fixed by 
$\sigma$.   
The isometry $\sigma$ can be extended to a fixed-point-free isometry of $N^d$ of order $p$ 
by diagonal action on each direct summand. 
Then $\sigma$ induces an automorphism of $k^d\times l^d$, which is 
fixed-point-free on $k^d$ and trivial on $l^d$. 
If $\mathcal{E}\subset k^d\times l^d$ is $\sigma$-invariant, 
then $\sigma$ acts on $L_{\mathcal{E}}$ as a fixed-point-free isometry. 

We consider the case $\mathcal{E}=\mathcal{C} \times \mathcal{D}$ for a $\sigma$-invariant 
even $\Z$-submodule $\mathcal{C}\subset k^d$ and 
an even $\Z$-submodule $\mathcal{D}\subset l^d$.   
Set $M=L_{\mathcal{C}\times \mathcal{D}}$.
Then $M$ becomes an even lattice and $\widehat{\sigma}$ defines an automorphism of order $p$ 
of the VOA $V_M$. 
For any $i=1, \ldots, p-1$, we can construct irreducible $\widehat{\sigma}^i$-twisted 
$V_{M}$-modules following \cite{Lepowsky85}.
As one of the main theorems, we show that the quantum dimension of irreducible 
$\widehat{\sigma}^i$-twisted $V_{M}$-module is equal to $|\mathcal{C}^\perp/\mathcal{C}|$. 
This implies that $V_{M}^{\langle \widehat{\sigma} \rangle}$ has a group-like fusion 
if and only if $\mathcal{C}$ is self-dual. 

One of the motivation of this paper lies in a study of the Moonshine VOA $V^\natural$ 
whose full automorphism group is the Monster simple group. 
The VOA $V^\natural$ was constructed as a $\Z_2$-orbifold construction from the Leech 
lattice VOA $V_{\Lambda}$ with respect to a lift of the $-1$-isometry of $\Lambda$ \cite{FLM}. 
Since then, some different constructions have been given. 
For instance, $V^\natural$ was constructed as a framed VOA \cite{Miyamoto04}. 
There is also a $\Z_3$-orbifold construction of  $V^\natural$ from $V_{\Lambda}$  
\cite{ChenLamShimakura16}.  
Using these constructions, several $2$-local and $3$-local subgroups of the Monster simple group
have been described relatively explicitly in \cite{ChenLamShimakura16, Shimakura11}. 

The case $p = 3$ of this paper was studied in \cite{ChenLam16} 
and the representation theory of $V_{M}^{\langle \widehat{\sigma} \rangle}$ with $p = 3$ and 
$d = 12$ plays an important role in \cite{ChenLamShimakura16}. 
Recently, a $\Z_p$-orbifold construction of $V^\natural$ from 
$V_{\Lambda}$ for $p=3,5,7,13$ was obtained \cite{AbeLamYamada17}.    
We hope the results of this paper could be helpful for the study of some $p$-local subgroups 
of the Monster simple group.

This paper is organized as follows.
In Section \ref{Sect2}, we review some notions from the representation theory and 
the orbifold theory of VOAs.
In Section \ref{Sect2.1}, we recall the basics of twisted modules for VOAs. 
We recall the definition and some properties of quantum dimensions in Section \ref{Sect2.2}. 
In Sections \ref{Sect3.1}--\ref{Sect3.3}, 
we review the constructions of lattice VOAs and their twisted modules. 
We also calculate the quantum dimensions of twisted modules of lattice VOAs in Section \ref{Sect3.4}. 
In Section \ref{Sect4}, we study a lattice $N=\sqrt{2}A_{p-1}$ 
for an odd integer $p\geq 3$ and discuss the codes associated with $N$. 
In Section \ref{Sect4.1}, we 
develop a code theory for $k\times l=\Z_2^{p-1}\times \Z_p\cong N^\circ/N$.
In Section \ref{Sect4.2}, we study the action of the isometry $\sigma$ 
on the lattice $L_{\mathcal{C} \times \mathcal{D}}$.
Section \ref{Sect5} is for the orbifold model 
$V_{L_{\mathcal{C} \times \mathcal{D}}}^{\la \widehat{\sigma} \ra}$. 
In Section \ref{Sect5.1}, we consider the lattice VOA associated with the lattice 
$L_{\mathcal{C} \times \mathcal{D}}$ for a $\sigma$-invariant even $\Z$-submodule  
$\mathcal{C} \subset k^d$ and an even $\Z$-submodule $\mathcal{D} \subset l^d$.
We also give a criterion that $V_{L_{\mathcal{C} \times \mathcal{D}}}^{\la \widehat{\sigma} \ra}$ 
has a group-like fusion.  
We determine the group structure of 
$\Irr(V_{L_{\mathcal{C} \times \mathcal{D}}}^{\la \widehat{\sigma} \ra})$ when 
$V_{L_{\mathcal{C}\times \mathcal{D}}}^{\la \widehat{\sigma} \ra}$ has a group-like fusion 
in Section \ref{Sect5.2}.   

In Appendix, we give an extension of a $\sigma$-invariant alternating $\Z$-bilinear map on 
$L_{\mathcal{C} \times \mathcal{D}}$, which is used for the construction of $\sigma$-twisted 
$V_{L_{\mathcal{C} \times \mathcal{D}}}$-modules, to the case 
$L_{\mathcal{C} \times \mathcal{D}}$ is a rational lattice. 

\medskip

\paragraph{Acknowledgment:} 
The authors thank  Masahiko Miyamoto, Hiroki Shimakura and Hiroshi Yamauchi 
for useful comments and discussions.
T. A. is partially supported by JSPS fellow 15K04823. 
C. L. is partially supported by MoST grant 104-2115-M-001-004-MY3 of Taiwan.

\section{Preliminaries}\label{Sect2}

In this section, we recall 
the representation theory and the orbifold theory of VOAs 
(see \cite{CarnahanMiyamoto16}, \cite{DongJiaoXu13}, \cite{DongRenXu15}, \cite{FHL},  
\cite{LepowskyLi04} and \cite{MatsuoNagatomo99} for example).
Throughout this paper, $\xi_s$ denotes a primitive $s$-th root of unity.    
The set of all nonnegative (resp. positive) integers is denoted by $\Zpos$ (resp. $\Zplus$). 

\subsection{Twisted modules for vertex operator algebras}\label{Sect2.1}
Let $(V, Y, \1,\w) $ be a VOA.
An automorphism of $V$ is a linear map $g\in GL(V)$ such that $g(Y(a,z)b)=Y(ga,z)gb$ 
for $a,b\in V$, $g(\w)=\w$ and $g(\1)=\1$. 
The group of all automorphisms of $V$ is denoted by $\Aut(V)$. 
For a finite subgroup $G$ of $\Aut(V)$, the fixed point subspace  
\[
V^G=\{a\in V|g(a)=a\text{ for }g\in G\}
\] 
becomes a subVOA of $V$. 
The VOA $V^G$ is called an {\it orbifold model} of $V$. 

Let $T\in\Zplus$ and $g\in \Aut(V)$ satisfying $g^T=1$. 
Then 
\[
V=\bigoplus_{i=0}^{T-1}V^{(i;g)}, \quad V^{(i;g)}=\{a\in V|g(a)=\xi_T^{-i} a\} 
\] 
 (cf. \cite[Remark 3.1]{DLM2000}). 

A \textit{weak $g$-twisted $V$-module} is a vector space $M$ equipped with a linear map  
\[
Y_M: V\otimes M\rightarrow M((x^{1/T})),\quad 
a\otimes u\mapsto Y_M(a,x)u=\sum_{n \in (1/T)\Z}a_{(n)}z^{-n-1}
\] 
such that $x^{r/T}Y_M(a,x)u\in M((x))$ if $a\in V^{(r;g)}$ and $u\in M$. 
The map $Y_M$ satisfies some additional conditions.  
A weak $g$-twisted $V$-module is said to be \textit{$\frac{1}{T}\Zpos$-gradable} 
if $M$ is decomposed into a direct sum of  homogeneous spaces as 
$M=\bigoplus_{n \in (1/T)\Zpos}M(n)$ such that $a_{(m)}M(n)\subset M(d+n-m-1)$ 
for homogeneous $a\in V_d$ and $m,n\in \frac{1}{T}\Z$, where $M(n)=0$ if $n<0$. 
An {\em ordinary} $g$-twisted $V$-module $M$, or a $g$-twisted $V$-module, 
is a weak $g$-twisted $V$-module which is decomposed as  
\[
M=\bigoplus_{\lambda\in \C}M_\lambda,\quad  M_{\lambda}=\{u\in M\mid L_0 u=\lambda u\} 
\]
such that $\dim M_\lambda<\infty$ for any $\lambda\in\C$ and that $M_{\lambda}=0$ 
if the real part of $\lambda$  is sufficiently small. 
If a $g$-twisted $V$-module $M$ is irreducible, then there exists $\rho(M)\in \C$ such that  
\[
M=\bigoplus_{n\in (1/T)\Zpos}M_{\rho(M)+n},\quad M_{\rho(M)}\neq 0. 
\]
We call the number $\rho(M)$ the {\it lowest conformal weight} of $M$. 

Let $M$ be a weak $g$-twisted $V$-module and $h$ an automorphism of $V$. 
Then we have a weak $hgh^{-1}$-twisted $V$-module $(M\circ h,Y_{M\circ h})$ defined by 
$M\circ h=M$ as vector spaces and $Y_{M\circ h}(a,x)=Y_M(h(a),x)$ for $a\in V$.
For a $g$-twisted $V$-module $M$, the restricted dual 
$M'=\bigoplus_{\lambda\in \C}M_{\lambda}^*$ has a structure of a $g^{-1}$-twisted module.
This module is called a {\em contragredient} module of $M$.
If the contragredient module of $V$ is isomorphic to $V$ itself, $V$ is called {\em self-dual}.     

A VOA $V$ is called {\em $g$-rational} ({\em rational} in the case $g=1$ and $T = 1$) 
if any $\frac{1}{T}\Zpos$-gradable $g$-twisted $V$-module is a direct sum of irreducible 
$\frac{1}{T}\Zpos$-gradable $g$-twisted $V$-modules. 
If $V$ is $g$-rational, then every irreducible $\frac{1}{T}\Zpos$-gradable $g$-twisted $V$-module 
becomes a $g$-twisted $V$-module. 
In  the case $g=1$ and $T=1$, we call a weak, $\N$-gradable or ordinary $g$-twisted $V$-module 
a weak, $\N$-gradable or ordinary $V$-module, respectively. 

\subsection{Simple currents and quantum dimensions}\label{Sect2.2}
For a triple of $V$-modules $(M,N,L)$, the fusion rule $N_{M,N}^{L}$ of type $\fusion{M}{N}{L}$ 
is the dimension of the vector space consisting of all intertwining operators of type 
$\fusion{M}{N}{L}$ (see \cite{FHL}). 
If $V$ is rational and $C_2$-cofinite, then the fusion rule of type $\fusion{M}{N}{L}$ is finite 
for any irreducible $V$-modules $M$, $N$ and $L$. 
In this case, we have a commutative ring $K(V)$, called a {\em fusion ring}, 
which is a free $\Z$-module with a basis indexed by $\Irr(V)=\{[M]\}$, 
where $\Irr(V)$ denotes the set of all equivalence classes $[M]$ of irreducible $V$-modules $M$. 
The multiplication of $K(V)$ is defined by 
\[
[M]\times[N]=\sum_{[L]\in \Irr(V)}N_{M,N}^{L}[L]  
\]  
for irreducible $V$-modules $M$ and $N$. 
It is known that $K(V)$ is associative and has a unit $[V]$ 
if $V$ is simple, rational, $C_2$-cofinite and of CFT type. 

A {\em fusion product} of $M$ and $N$ is a $V$-module $M\boxtimes N$ such that 
$[M\boxtimes N]=[M]\times[N]$. 
Fusion products of irreducible modules  exist and are unique up to equivalence if $V$ is simple, 
rational, $C_2$-cofinite and of CFT type. 
An irreducible $V$-module $M$ is called a {\it simple current} 
if the multiplication by $[M]$ on $K(V)$ induces a permutation of $\Irr(V)$.   
A simple, rational, $C_2$-cofinite, self-dual VOA $V$ of CFT type is said to have a 
{\it group-like fusion} if every irreducible module is a simple current.
If $V$ has a group-like fusion, then the fusion product defines  an abelian group structure on $\Irr(V)$. 
 
For a $g$-twisted $V$-module, the character $Z_M(\tau)$ is defined by 
\[
Z_M(\tau)=\tr_M q^{L_0-c_V/24}=\sum_{\lambda\in\C}\dim M_\lambda q^{\lambda-c_V/24},
\quad q=e^{2\pi \sqrt{-1}\tau}
\] 
for $\tau \in \mathcal{H}$, where $\mathcal{H}$ is the upper half plane 
and $c_V$ is the central charge of $V$.   
If the limit 
\[
\lim_{y\rightarrow 0^+} \frac{Z_M(\sqrt{-1}y)}{Z_V(\sqrt{-1}y)}
\]
converges, then the limit is called the {\it quantum dimension} of $M$ over $V$ and denoted by 
$\qdim_V M$ \cite{DongJiaoXu13}. 

\begin{theorem}\label{qdimthm}
Let $V$ be a simple, rational, $C_2$-cofinite, self-dual VOA of CFT type.
Let $g$ be an automorphism of $V$ of finite order $T$, 
and assume that every irreducible $g^i$-twisted     
$V$-module has a positive lowest conformal weight for $i=0,\ldots, T-1$ except $V$.  
Then the following assertions hold. 
\begin{enumerate}
\item[(1)] {\rm (\cite{CarnahanMiyamoto16, Miyamoto15})} 
$V^{\langle g\rangle}$ is rational and $C_2$-cofinite.

\item[(2)] {\rm (\cite{DongJiaoXu13})} For any irreducible $g^i$-twisted $V$-module $M$, 
$\qdim_VM$ exists and it is a algebraic real number greater than or equal to $1$. 

\item[(3)] {\rm (\cite{DongJiaoXu13})} Extend the map 
$\Irr(V)\rightarrow \R, [M]\mapsto \qdim_V M$ to a map $f:K(V)\rightarrow \R$ by $\Z$-linearity. 
Then $f$ is a ring homomorphism.

\item[(4)] {\rm (\cite{DongJiaoXu13})}  
A $V$-module $M$ is a simple current if and only if $\qdim_V M=1$. 

\item[(5)] {\rm (\cite{DongJiaoXu13, DongMason97})} 
Let $M$ be an irreducible $g^i$-twisted $V$-module. 
If $M\circ g\not\cong M$, then $M$ is irreducible as a $V^{\langle g\rangle}$-module. 
Moreover, $\qdim_{V^{\langle g\rangle}}M=T\qdim_{V}M$. 

\item[(6)] {\rm (\cite{DongJiaoXu13, DongRenXu15})}
Let $M$ be an irreducible $g^i$-twisted $V$-module.  
If $M\circ g\cong M$, then $M$ is completely reducible as a $V^{\langle g\rangle}$-module.  
Moreover, if the number of irreducible components in $M$ is $s$, 
then every irreducible component of $M$ has a quantum dimension $\frac{T}{s}\qdim_{V} M$. 
\end{enumerate}  
\end{theorem}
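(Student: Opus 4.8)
The plan is to assemble the six assertions from the cited literature, so the bulk of the work is to verify that the standing hypotheses here---$V$ simple, rational, $C_2$-cofinite, self-dual, of CFT type; $g\in\Aut(V)$ of finite order $T$; and every irreducible $g^i$-twisted $V$-module other than $V$ having positive lowest conformal weight---match the hypotheses required by each source. First I would record that rationality and $C_2$-cofiniteness of $V$ force $V$ to be $g$-rational, so that for each $i$ there are only finitely many irreducible $g^i$-twisted $V$-modules and each is an ordinary twisted module. The positivity of the lowest conformal weights is then exactly the input used in \cite{Miyamoto15, CarnahanMiyamoto16} to run the orbifold argument, giving part~(1): $C_2$-cofiniteness of $V^{\la g\ra}$ is \cite{Miyamoto15} and rationality is \cite{CarnahanMiyamoto16}. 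One also checks along the way that $V^{\la g\ra}$ is simple, self-dual and of CFT type, so that the fusion-ring and quantum-dimension formalism applies to $V^{\la g\ra}$ as well, which is needed for~(5) and~(6).

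For~(2), I would invoke the modular invariance of trace functions for twisted modules (Zhu, Dong--Li--Mason): the character $Z_M(\tau)$ of an irreducible $g^i$-twisted $V$-module lies in a finite-dimensional space stable under a congruence subgroup, so the $y\to 0^+$ asymptotics of $Z_M(\sqrt{-1}y)/Z_V(\sqrt{-1}y)$ are controlled by the module of minimal effective central charge, which under the positivity hypothesis is $V$ itself. Hence the limit exists and equals a ratio of $S$-matrix entries; applying Perron--Frobenius to the fusion matrix of $M$ shows $\qdim_V M$ is a real algebraic number $\ge 1$. This is \cite{DongJiaoXu13}.

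Parts~(3) and~(4) then follow formally: the fusion matrix of $M\boxtimes N$ is the product of those of $M$ and $N$, so the Perron--Frobenius eigenvalues multiply, giving $\qdim_V(M\boxtimes N)=\qdim_V M\cdot\qdim_V N$; extending $\Z$-linearly yields the ring homomorphism $f\colon K(V)\to\R$. Since $f$ is a ring homomorphism with $f([V])=1$ and $f([M])\ge 1$ for all irreducibles, $M$ is a simple current if and only if its fusion matrix is a permutation matrix, i.e.\ if and only if its Perron--Frobenius eigenvalue is $1$, i.e.\ $\qdim_V M=1$. For~(5) and~(6) I would restrict $M$ to $V^{\la g\ra}$ and appeal to the twisted quantum Galois theory of \cite{DongMason97, DongJiaoXu13, DongRenXu15}: $M|_{V^{\la g\ra}}$ is completely reducible, and its decomposition is governed by the stabilizer of the class $[M]$ under the action $N\mapsto N\circ g$ of $\la g\ra$ on $g^i$-twisted modules. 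If $M\circ g\not\cong M$ the stabilizer is trivial, $M$ stays irreducible over $V^{\la g\ra}$, and comparing $Z_M$ with $Z_V$ over the $T$-element orbit gives $\qdim_{V^{\la g\ra}}M=T\,\qdim_V M$; if $M\circ g\cong M$ then $M$ splits into $s$ inequivalent irreducible $V^{\la g\ra}$-modules with $s\mid T$, all of equal quantum dimension by the symmetry, and the same character computation distributes the factor $T$ as $T/s$ per component.

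The main obstacle is exactly steps~(5) and~(6): the precise counting of the number $s$ of irreducible constituents and the identification of the normalizing factor require the full twisted quantum Galois theory together with $g$-rationality of $V$, and one must be careful that $V^{\la g\ra}$ inherits enough good properties (rational, $C_2$-cofinite, CFT type, self-dual) for the quantum-dimension theory of~(2)--(4) to be used on $V^{\la g\ra}$ in the comparison of characters. Everything else reduces to matching hypotheses to the quoted results.
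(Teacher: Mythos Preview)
The paper does not give a proof of this theorem at all: it is stated purely as a compilation of results, with each item attributed to the cited references and no proof environment following. Your proposal is consistent with that treatment and in fact goes further than the paper does, by sketching why the standing hypotheses match those of \cite{Miyamoto15, CarnahanMiyamoto16, DongJiaoXu13, DongMason97, DongRenXu15} and how the pieces interlock; nothing you wrote conflicts with the paper's handling of the statement.
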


\section{Lattice VOAs and quantum dimensions}\label{Sect3}

In this section, we review the constructions of lattice VOAs and their twisted modules. 
We also calculate their quantum dimensions.  

\subsection{Lattice VOAs and their irreducible modules}\label{Sect3.1}

We first recall the construction of the lattice VOA $V_L$ associated with a positive definite even lattice 
$(L, \la \,\cdot\, , \,\cdot\, \ra)$ (see \cite{Dong93}, \cite{FLM}).  
To construct $V_L$, we consider the free bosonic VOA $M(1)$ associated with a finite dimensional 
vector space $\h=\C\otimes_{\Z} L$ with a $\C$-bilinear form extending the inner product of $L$. 
The VOA $M(1)$ is given by the symmetric algebra $S(\h\otimes t^{-1}\C[t^{-1}])$ as its base space 
and the vacuum vector $\1$ is the unit $1$. 
Its Virasoro vector $\w$ is $\frac{1}{2}\sum_{i=1}^{\dim \h}(h_i\otimes t^{-1})^2$, 
where $\{h_i\}$ is an orthonormal basis of $\h$.    

Let $L^\circ=\{\alpha\in \Q\otimes_{\Z} L\,|\,\langle \alpha,L\rangle\subset \Z\}$ 
be the dual lattice of $L$ and take $s\in \Zplus$ such that $s\langle L^\circ,L^\circ\rangle\subset \Z$. 
We consider a central extension 
\begin{align*}
1\rightarrow \langle\kappa_s\rangle\rightarrow \widehat{L^\circ}\rightarrow L^\circ \rightarrow 1 
\end{align*} 
of $L^\circ$ by a cyclic group $\langle\kappa_s\rangle$ of order $s$.  
We denote the canonical projection of $a\in \widehat{L^\circ}$ to $L$ by $\overline{a}$.  
Such a central extension is determined by an alternating $\Z$-bilinear map    
$c:L^\circ\times L^\circ\rightarrow \Z_s$ subject to
\[
aba^{-1}b^{-1}=\kappa_s^{c(\overline{a},\overline{b})}\qquad \text{ for } a,b\in\widehat{L^\circ}.
\]
Then we have an associative algebra $\C\{L^\circ\}=\C[\widehat{L^\circ}]/(\kappa_s-\xi_s)$. 
For any subset $X\subset L^\circ$, set $\widehat{X}=\{a\in\widehat{L^\circ}|\overline{a}\in X\}$ and 
let $\C\{X\}$ be a subspace of $\C\{L^\circ\}$ spanned by the image of $\widehat{X}$. 
We define $V_X=M(1)\otimes \C\{X\}$ and identify $M(1)=V_{\{0\}}$. 
Then  $V_L=M(1)\otimes \C\{L\}$ is a VOA with the vacuum vector $\1$, and the Virasoro vector 
$\omega$ of $V_L$ coincides with that of $M(1)$.  
For any $\lambda\in L^\circ$, $V_{L+\lambda}$ has a $V_L$-module structure.   

\begin{theorem}\label{thm:rep_lattice_VOA}
Let $L$ be a positive definite even lattice. 
Then the following assertions hold. 
\begin{enumerate}
\item[(1)] {\rm (\cite{Dong93, Zhu96})} $V_L$ is a simple, rational, $C_2$-cofinite VOA 
of CFT type.
\item[(2)] {\rm (\cite{Dong93, DongLepowsky93})} $V_L$ has a group-like fusion and 
$\Irr (V_L)=\{[V_{L+\lambda}]|\lambda\in L^\circ\}\cong L^\circ/L$ with 
$V_{L+\lambda}\boxtimes V_{L+\mu}\cong V_{L+\lambda+\mu}$ for $\lambda,\mu\in L^\circ$.
\end{enumerate}
\end{theorem}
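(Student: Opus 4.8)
The plan is to assemble both parts from the standard structure theory of lattice VOAs, which by this point in the literature is entirely classical; the task is to cite the right results and glue them. First I would recall the decomposition $V_L = M(1)\otimes\C\{L\}$ and observe that $M(1)$ is simple, rational, $C_2$-cofinite and of CFT type; then I would invoke the fact that tensoring with the group-algebra-twisted piece $\C\{L\}$, together with the grading by $L$, produces a simple VOA. Concretely, simplicity follows because any nonzero ideal must contain a homogeneous vector $e_\alpha$ for some $\alpha\in L$ (by the $L$-grading and the action of the Heisenberg), and then repeated application of the vertex operators $Y(e_\alpha,z)$ and $Y(e_{-\alpha},z)$ recovers all of $V_L$. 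For rationality and $C_2$-cofiniteness I would cite Dong \cite{Dong93} directly, and for the CFT-type property note that $V_L$ is $\Zpos$-graded with $(V_L)_0 = \C\1$ since $L$ is positive definite (so the only norm-zero lattice vector is $0$). This gives (1) with essentially no computation — it is pure citation plus the grading remark.

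For (2), the key input is the classification of irreducible $V_L$-modules: every irreducible $V_L$-module is isomorphic to some $V_{L+\lambda}$ with $\lambda\in L^\circ$, and $V_{L+\lambda}\cong V_{L+\mu}$ iff $\lambda-\mu\in L$, so $\Irr(V_L)$ is in bijection with $L^\circ/L$. This is Dong \cite{Dong93}; I would state it as such. The fusion rules $V_{L+\lambda}\boxtimes V_{L+\mu}\cong V_{L+\lambda+\mu}$ are computed in Dong--Lepowsky \cite{DongLepowsky93} (the intertwining operators are built directly from the vertex operators $Y(e_\alpha,z)$ extended to the module $V_{L+\mu}$, and the relevant fusion rule is exactly $1$-dimensional). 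From this the abelian group structure on $\Irr(V_L)$ is immediate: the map $L^\circ/L \to \Irr(V_L)$, $\lambda+L \mapsto [V_{L+\lambda}]$, is then a group isomorphism, and in particular each $[V_{L+\lambda}]$ is invertible in $K(V_L)$ with inverse $[V_{L-\lambda}]$, so every irreducible module is a simple current and $V_L$ has group-like fusion in the sense defined in Section \ref{Sect2.2}. Self-duality of $V_L$ (needed for "group-like fusion" as defined) follows since the contragredient of $V_{L+\lambda}$ is $V_{L-\lambda}$, hence the contragredient of $V_L=V_{L+0}$ is $V_L$ itself.

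There is essentially no serious obstacle here: the statement is a compilation of \cite{Dong93, DongLepowsky93, Zhu96}, and the only thing requiring a word of argument rather than a citation is that the positive-definiteness of $L$ forces CFT type (i.e. $(V_L)_0=\C\1$ and the conformal weights are nonnegative), together with the observation that the abstract bijection $L^\circ/L\cong\Irr(V_L)$ upgrades to a group isomorphism once the fusion rules are known to be those of the group $L^\circ/L$. The mild care-point worth flagging is the dependence on the choice of central extension $\widehat{L^\circ}$ (the $2$-cocycle $c$): one checks that different admissible choices give isomorphic VOAs and module categories, so the statement is independent of this choice — but since the paper has already fixed such a $c$ in the construction, I would simply work with that fixed datum and not belabor the point.
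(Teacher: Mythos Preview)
Your approach matches the paper's: Theorem~\ref{thm:rep_lattice_VOA} is stated there without proof, as a summary of results from \cite{Dong93}, \cite{Zhu96}, and \cite{DongLepowsky93}, and your proposal is essentially a more explicit unpacking of those citations.

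One factual error to flag: you write that $M(1)$ is ``simple, rational, $C_2$-cofinite and of CFT type.'' The Heisenberg VOA $M(1)$ is simple and of CFT type, but it is \emph{neither} rational nor $C_2$-cofinite --- it has a continuum of irreducible modules indexed by $\mathfrak{h}$, and $M(1)/C_2(M(1))$ is infinite-dimensional. The rationality and $C_2$-cofiniteness of $V_L$ are genuinely properties of the lattice extension, not inherited from $M(1)$; fortunately you do not actually use this false claim, since you immediately fall back on citing \cite{Dong93} (and implicitly \cite{Zhu96}) for these properties. Just delete the incorrect clause about $M(1)$ and the rest of your compilation argument stands.
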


\subsection{$\sigma$-twisted $M(1)$-modules}\label{Sect3.2}
We next review a construction of $\sigma$-twisted modules of $M(1)$ 
(see \cite{BakalovKac04}, \cite{DongLepowsky96}, \cite{Lepowsky85} for the details).  
Let $\h$ be a finite dimensional vector space with nondegenerate symmetric 
bilinear form $\langle\,\cdot\,,\cdot\,\rangle$. 
We consider a linear automorphism $\sigma$ of $\h$ of order $p \ge 2$ 
preserving $\langle\,\cdot\,,\cdot\,\rangle$. 
For simplicity, we assume that $\sigma$ is fixed-point-free on $\h$.  
Following \cite[(4.17)]{BakalovKac04}, \cite[Remark 3.1]{DLM2000}, 
we set 
\[
\h^{(i;\sigma)}=\{u\in \h|\sigma(u)=\xi_{p}^{-i}u\}, \quad i=0,1,\ldots,p-1.
\]   
Note that $\h^{(0;\sigma)} = 0$, for $\sigma$ is fixed-point-free on $\h$. 
Define the $\sigma$-twisted affine Lie algebra $\widehat{\h}[\sigma]$ by  
\[
\widehat{\h}[\sigma]=\bigoplus_{i=1}^{p-1}\h^{(i;\sigma)}\otimes t^{i/p}\C[t,t^{-1}] \oplus \C K
\] 
with the commutation relations 
\begin{align*}
[x\otimes t^m,y\otimes t^n]=\langle x,y\rangle m\delta_{m+n,0}K,\quad [K,\widehat{\h}[\sigma]]=0 
\end{align*}
for $x\in \h^{(i;\sigma)}$, $y\in \h^{(j;\sigma)}$, $m\in i/p+\Z$ and $n\in j/p+\Z$. 
Set
\[
r_i=\dim\h^{(i;\sigma)}. 
\]  

We regard $\C$ as a 
$\bigoplus_{i=1}^{p-1}\widehat{\h}^{(i;\sigma)}\otimes t^{i/p}\C[t]\oplus \C K$-module on which 
$K$ acts as $1$ and $\bigoplus_{i=1}^{p-1}\widehat{\h}^{(i;\sigma)}\otimes t^{i/p}\C[t]$ acts as 
$0$. 
We then construct an induced $\widehat{\h}[\sigma]$-module  
\[
M(1)(\sigma) = U(\widehat{\h}[\sigma])\otimes_{U(\bigoplus_{i=1}^{p-1}\widehat{\h}^{(i;\sigma)}\otimes t^{i/p}\C[t]\oplus \C K)}\C. 
\]

We denote  the action of $h\otimes t^n$ on $M(1)(\sigma)$ by $h(n)$ and set
\[
h(z)=\sum_{n\in \Q}h(n) x^{-n-1}.
\] 
Then there is  a unique  $\sigma$-twisted $M(1)$-module structure $(M(1)(\sigma),Y)$ on 
$M(1)(\sigma)$ such that $Y(h(-1)\1,z)=h(z)$ for $h\in \h$. 
By the construction, we have 
\[
{L}_0=\frac{1}{2}\sum_{i=1}^{p-1}\left(\sum_{m\in i/p+\Z}\sum_{s=1}^{r_i}\NO h^{(i)}_s(m) h^{(p-i)}_s(-m) \NO\right)+\rho({M(1)(\sigma)})
\]  
for a basis $\{h_1^{(i)},\ldots, h_{r_i}^{(i)}\}$ of $\h^{(i;\sigma)}$ satisfying   
\[
\langle h^{(i)}_s,h^{(j)}_t\rangle=\delta_{s,t}\delta_{i+j,p}
\] 
for $1\leq i,j\leq  p-1$, $1\leq s\leq r_i$ and $1\leq t\leq r_j$, where 
\begin{align}\label{tw}
\rho({M(1)(\sigma)})=\frac{1}{4p^2}\sum_{i=1}^{p-1}i(p-i)r_i
\end{align}   
is the lowest conformal weight of $M(1)(\sigma)$ and $\NO \cdot\NO$ denotes 
the normal ordered product. 
We note that 
\begin{equation*}
M(1)(\sigma)=S\big( h^{(p-i)}_s(-i/p-n) \big| 1 \le i \le p-1, 1 \le s \le r_i, n\in \Zpos \big)
\end{equation*}    
is a symmetric algebra generated by 
$ h^{(p-i)}_s(-i/p-n)$, $1 \le i \le p-1$, $1 \le s \le r_i$, $n\in \Zpos$    
as a vector space. 
Since the operator $ h^{(p-i)}_s(-i/p-n)$ increases the conformal weights of homogeneous vectors 
by $i/p+n$, we see that the character of $M(1)(\sigma)$ is given by 
\begin{equation}\label{char001}
Z_{M(1)(\sigma)}(\tau)
=\tr_{M(1)(\sigma)}q^{L_0-{c_{M(1)}}/{24}}
=\frac{q^{\rho(M(1)(\sigma))-\dim \h/24}}{\prod_{i=1}^{p-1}\prod_{n=0}^\infty (1-q^{i/p+n})^{r_{p-i}}}.
\end{equation}    

\subsection{$\widehat{\sigma}$-twisted $V_{L}$-modules}\label{Sect3.3}

In this section,   
we recall a construction of $\widehat{\sigma}$-twisted $V_L$-modules 
\cite{DongLepowsky96, Lepowsky85}.  
Let $(L, \langle \,\cdot\,,\,\cdot\,\rangle)$ be a positive definite even lattice. 
We denote by $O(L)$ the group of all isometries of $L$: 
\[
O(L)=\{ g\in GL(\R\otimes_\Z L)\mid g(L) \subset L \text{ and } 
\la x,y\ra =\la gx, gy\ra \text{ for all } x, y\in L\}.
\] 

Let $\sigma\in O(L)$ be of order $p \ge 2$. 
For simplicity, we assume that $\sigma$ is fixed-point-free on $L$. 
Let $s = p$ if $p$ is even and $s = 2p$ if $p$ is odd. 
Moreover, we assume that $\la \sigma^{p/2}(\alpha), \alpha\ra \in \Z$ for $\alpha \in L$ 
if $p$ is even. 
Following \cite[Remark 2.2]{DongLepowsky96}, 
we define two $\sigma$-invariant alternating $\Z$-bilinear maps 
$c$ and $c^{\sigma}$ from $L \times L$ to $\Z_s$ by 
\begin{equation}\label{eq:c_cs}
c(\alpha,\beta) = \frac{s}{2}\langle \alpha, \beta \rangle + s\Z, \qquad
c^\sigma(\alpha,\beta) 
= \frac{s}{p} \sum_{i=1}^{p-1} \langle  i \sigma^i (\alpha), \beta \rangle + s\Z.
\end{equation} 

The radical of $c^\sigma$ in $L$ is defined by 
\[
R_L^\sigma=\{\alpha\in L\,|\,c^\sigma(\alpha,\beta)=0\text{ for }\beta\in L\}. 
\]

We consider two central extensions
\begin{equation*}
1 \rightarrow \langle\kappa_s\rangle \rightarrow \widehat{L} 
\rightarrow L\rightarrow 1, \qquad 
1 \rightarrow \langle\kappa_s\rangle \rightarrow \widehat{L}_\sigma 
\rightarrow L\rightarrow 1
\end{equation*} 
of $L$ by a cyclic group $\langle \kappa_s \rangle$ of order $s$ 
determined by the $\sigma$-invariant alternating $\Z$-bilinear maps $c$ and $c^{\sigma}$,
respectively.  
We also denote by $\overline{a}$ the canonical projection of 
$a \in \widehat{L}$ or $\widehat{L}_\sigma$ to $L$. 

The isometry $\sigma$ of $L$ can be lifted to an automorphism 
$\widehat{\sigma}$ of the group $\widehat{L}$ of order $p$. 
Moreover, $\widehat{\sigma}$ acts on the group $\widehat{L}_\sigma$
as an automorphism (cf. \cite[Remark 2.2]{DongLepowsky96}). 
We have  
$\overline{\widehat{\sigma}(a)}=\sigma(\overline{a})$ for $a \in \widehat{L}$ or 
$\widehat{L}_\sigma$. 
The automorphism $\widehat{\sigma}$ of $\widehat{L}$ induces an automorphism of the VOA $V_L$ 
associated with the lattice $L$. 
We use the same symbol $\widehat{\sigma}$ to denote the automorphism of $V_L$.

Let $T$ be an $\widehat{L}_\sigma$-module satisfying the following conditions:
\begin{align}
\kappa_s&= \xi_s\id_T,\label{cond003}\\ 
at &= \widehat{\sigma}(a) t\quad \text{for }a\in \widehat{L}_\sigma, t\in T. \label{cond004}
\end{align}
Let $M(1)(\sigma)$ be as in Section \ref{Sect3.2} with $\mathfrak{h} = \C \otimes_\Z L$. 
Then the tensor product 
\[
V_L^T=M(1)(\sigma)\otimes T
\] 
has a $\widehat{\sigma}$-twisted $V_L$-module structure $(V_L^T,Y)$.
The $\widehat{\sigma}$-twisted $V_L$-module $V_L^T$ is irreducible if and only if $T$ is irreducible 
as an $\widehat{L}_\sigma$-module, and every irreducible $\widehat{\sigma}$-twisted $V_L$-module 
is isomorphic to $V_L^T$ for some irreducible $\widehat{L}_\sigma$-module $T$ 
satisfying \eqref{cond003} and \eqref{cond004}. 

Irreducible $\widehat{L}_\sigma$-modules satisfying \eqref{cond003} and \eqref{cond004} 
were constructed in \cite[Section 6]{Lepowsky85}.  
Consider a subgroup $\widehat{K}=\{a^{-1}\widehat{\sigma}(a)|a\in \widehat{L}_\sigma\}$.
Then $\widehat{K}$ is a normal subgroup of $\widehat{L}_{\sigma}$ and 
$\langle\kappa_s\rangle\cap \widehat{K}=\{1\}$. 
Let $\widehat{R}_L^\sigma=\{a\in \widehat{L}_\sigma|\overline{a}\in R_L^\sigma\}$.
Then $\widehat{R}_L^\sigma$ is the center of $\widehat{L}_\sigma$ and 
$\langle\kappa_s\rangle\times \widehat{K}$ is a subgroup of $\widehat{R}_L^\sigma$. 
For any irreducible character $\chi:\widehat{R}_L^\sigma\rightarrow \C^\times$ satisfying 
$\chi(\kappa_s)=\xi_s$ and $\chi(\widehat{K})=\{1\}$, take a maximal abelian subgroup 
$\widehat{A}$ of $\widehat{L}_\sigma$ containing $\widehat{R}_L^\sigma$ and an irreducible 
character $\psi:\widehat{A}\rightarrow \C^\times$ extending $\chi$.
Set 
\[
T_\chi=\C[\widehat{L}_\sigma]\otimes_{\C[\widehat{A}]}\C_\psi,
\]
where $\C_\psi$ is a one dimensional $\widehat{A}$-module affording the character $\psi$. 
Then $T_{\chi}$ is an irreducible $\widehat{L}_\sigma$-module satisfying \eqref{cond003} 
and \eqref{cond004}. 
Note that a different choice of $\psi$ extending $\chi$ gives an irreducible 
$\widehat{L}_\sigma$-module equivalent to $T_{\chi}$   
and there is a one to one correspondence between inequivalent irreducible 
$\widehat{L}_\sigma$-modules satisfying \eqref{cond003} and \eqref{cond004} and 
characters $\chi$ on $\widehat{R}_L^\sigma$ satisfying $\chi(\kappa_s)=\xi_s$ and 
$\chi(\widehat{K})=\{1\}$. 

In particular, there are exactly $|R_L^\sigma/(1-\sigma)(L)|$ inequivalent irreducible 
$\widehat{L}_\sigma$-modules satisfying \eqref{cond003} and \eqref{cond004}.  
Since $\dim T_{\chi} = \dim \C[\widehat{L}_\sigma/\widehat{A}]$  
and $\dim \C[\widehat{L}_\sigma/\widehat{A}]=\dim\C[\widehat{A}/\widehat{R}_L^\sigma]$, 
we have  
\begin{equation}\label{dim002}
(\dim T_{\chi})^2=[\widehat{L}_\sigma:\widehat{R}_L^\sigma]=[L:{R}_L^\sigma]. 
\end{equation} 
 
The following lemma will be used in Section \ref{Sect5.1}.  

\begin{lemma}\label{lem:radical_general}  
Let $L$, $\sigma$, $c^\sigma$ and $R_L^\sigma$ be as above. Then we have
$R_L^\sigma = ((1-\sigma)L^\circ) \cap L$. 
\end{lemma}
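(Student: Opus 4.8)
The plan is to unwind the definition of $c^\sigma$ and reduce the radical condition to a statement about the bilinear form on $L^\circ$. First I would rewrite the sum $\sum_{i=1}^{p-1} i\,\sigma^i$ in a more tractable form. Since $\sigma$ is fixed-point-free of order $p$, the operator $1-\sigma$ is invertible on $\mathfrak h = \C\otimes_\Z L$, and one has the algebraic identity $\sum_{i=1}^{p-1} i\,\sigma^i = \tfrac{p}{\,\sigma-1\,}\cdot(\text{something})$; more precisely, using $\sum_{i=0}^{p-1}\sigma^i = 0$ on $\mathfrak h$ (as $\sigma$ has no fixed vectors) together with the standard telescoping $\sum_{i=1}^{p-1} i\,x^i = \tfrac{x}{(1-x)^2}\big((p-1)x^p - p x^{p-1} + 1\big)$ specialized with $\sigma^p = 1$, I would obtain a clean relation of the form $\sum_{i=1}^{p-1} i\,\sigma^i = p\,(1-\sigma)^{-1}$ (up to a sign and a multiple of $\sum_i \sigma^i$, which vanishes). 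Then $c^\sigma(\alpha,\beta) = \tfrac{s}{p}\langle \sum_i i\sigma^i(\alpha),\beta\rangle = s\,\langle (1-\sigma)^{-1}\alpha,\beta\rangle \bmod s\Z$.

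Next I would translate the vanishing condition. Fix $\alpha\in L$ and set $\gamma = (1-\sigma)^{-1}\alpha \in \mathfrak h$. Then $\alpha \in R_L^\sigma$ iff $s\langle\gamma,\beta\rangle \in s\Z$ for all $\beta\in L$, i.e. iff $\langle\gamma,\beta\rangle\in\Z$ for all $\beta\in L$, i.e. iff $\gamma\in L^\circ$. So $\alpha\in R_L^\sigma$ iff $(1-\sigma)^{-1}\alpha\in L^\circ$, equivalently $\alpha\in(1-\sigma)L^\circ$; combined with $\alpha\in L$ this gives $\alpha\in \big((1-\sigma)L^\circ\big)\cap L$. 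Conversely, if $\alpha = (1-\sigma)\delta$ with $\delta\in L^\circ$ and $\alpha\in L$, then $(1-\sigma)^{-1}\alpha = \delta\in L^\circ$, so the same computation shows $c^\sigma(\alpha,\beta)=0$ for all $\beta\in L$, hence $\alpha\in R_L^\sigma$. This yields both inclusions.

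The one genuinely delicate point — and the step I expect to be the main obstacle — is justifying the passage from ``$\tfrac{s}{p}\sum_i i\sigma^i$'' to ``$s(1-\sigma)^{-1}$'' at the level of $\Z_s$-valued forms rather than $\C$-valued ones: I need $\tfrac{s}{p}\sum_{i=1}^{p-1} i\langle\sigma^i\alpha,\beta\rangle$ and $s\langle(1-\sigma)^{-1}\alpha,\beta\rangle$ to agree \emph{as elements of $\Z_s$}, not merely in $\C$. This requires checking that the identity $\sum_{i=1}^{p-1} i\,\sigma^i = p(1-\sigma)^{-1}$ holds over $\Z[\sigma]/(\sum_{i=0}^{p-1}\sigma^i)$, or at least that $p(1-\sigma)^{-1}$ makes sense as an element of $\Z[\sigma]$ modulo the fixed-point-free relation — equivalently, that $(1-\sigma)$ divides $p$ in the relevant quotient ring, which follows from $\prod_{i=1}^{p-1}(1-\sigma^i)$ being (up to a unit) the constant term of the cyclotomic-type factor and equal to $p$ on the fixed-point-free part. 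Once this integrality is in place the argument closes immediately.

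Alternatively, one can avoid inverting $1-\sigma$ altogether: observe that for $\alpha\in L$, writing $\alpha = (1-\sigma)\gamma$ with $\gamma\in\mathfrak h$, the element $\gamma$ lies in $L^\circ$ iff $\langle\gamma,L\rangle\subset\Z$, and then relate $\langle\gamma,\beta\rangle$ directly to $c^\sigma(\alpha,\beta)$ via the identity $\tfrac1p\sum_{i=1}^{p-1} i\,\sigma^i(1-\sigma) = \tfrac1p\sum_{i=1}^{p-1} i(\sigma^i-\sigma^{i+1})$, which telescopes (using $\sigma^p=1$ and $\sum_{i=0}^{p-1}\sigma^i=0$ on $\mathfrak h$) to $-1 + \tfrac1p\sum_{i=0}^{p-1}\sigma^i = -1$, i.e. to $-\id$. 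This bypasses the division and directly exhibits $c^\sigma(\alpha,\beta) = -s\langle\gamma,\beta\rangle \bmod s\Z$ for $\alpha = (1-\sigma)\gamma$, from which the lemma is immediate in both directions. I would likely present this telescoping form as the main computation, since it keeps everything manifestly $\Z$-integral.
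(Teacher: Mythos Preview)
Your proposal is correct and takes essentially the same approach as the paper. The telescoping identity you settle on in the final paragraph, $\tfrac{1}{p}\sum_{i=1}^{p-1} i\,\sigma^i(1-\sigma) = -\id$ on $\mathfrak h$, is exactly the paper's key computation (written there as $(1-\sigma)\sum_{i=1}^{p-1} i\sigma^i = -p$), and both arguments then reduce ``$\alpha\in R_L^\sigma$'' to ``$(1-\sigma)^{-1}\alpha\in L^\circ$'' and read off the two inclusions; the paper simply goes straight to this telescoping form rather than first passing through $(1-\sigma)^{-1}$ and worrying about integrality.
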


\begin{proof}
The isometry $\sigma$ can be extended linearly to a fixed-point-free isometry of $L^\circ$ of order $p$. 
We have
\begin{equation}\label{eq:1-sigma}
(1-\sigma)\sum_{i=1}^{p-1} i \sigma^i 
= \sigma + \sigma^2 + \cdots + \sigma^{p-1} - (p-1)\sigma^p 
= -p
\end{equation}
as operators on $L^\circ$. 
Let $\alpha\in L$. 
Since $c^\sigma(\alpha, \beta) = 0$ is equivqlent to 
$\langle \frac{1}{p} \sum_{i=1}^{p-1} i \sigma^i (\alpha), \beta \rangle \in \Z$ for any $\beta\in L$, 
we have $\alpha \in R_L^\sigma$ if and only if 
$\frac{1}{p} \sum_{i=1}^{p-1} i \sigma^i (\alpha) \in L^\circ$. 
Hence if $\alpha \in R_L^\sigma$, then $-\alpha \in (1-\sigma)L^\circ$ by \eqref{eq:1-sigma}, 
and so $R_L^\sigma \subset ((1-\sigma)L^\circ) \cap L$. 
On the other hand, it follows from \eqref{eq:1-sigma} that 
$\sum_{i=1}^{p-1} i \sigma^i (1-\sigma)(\lambda) = -p\lambda$ for $\lambda \in L^\circ$. 
Thus $R_L^\sigma \supset ((1-\sigma)L^\circ) \cap L$ and the assertion holds.
\end{proof}

\subsection{Quantum dimensions of $\widehat{\sigma}$-twisted modules of $V_L$}\label{Sect3.4}

In this section,  
let $p \ge 2$ be an integer and $\sigma$ an isometry of a positive definite even lattice $L$ 
of order $p$. 
Let $\h=\C\otimes_{\Z} L$.  
Recall the numbers $r_i=\dim \h^{(i;\sigma)}$ for $i=0,1,\ldots, p-1$. 
Following \cite[page 331]{FLM},  
we first prove the following lemma.
The isometry $\sigma$ is not necessarily fixed-point-free for the lemma.  

\begin{lemma}\label{lemma10004}
For any $0\leq i,j\leq p-1$ and positive divisor $k$ of $p$, 
if $\xi_p^i$ and $\xi_p^j$ are both primitive $k$-th root of unity, then $r_i=r_j$.      
\end{lemma}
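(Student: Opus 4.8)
The plan is to exploit the fact that $\sigma$ is a real isometry of a lattice, so its characteristic polynomial on $\h=\C\otimes_\Z L$ has integer (in fact rational) coefficients. First I would record the elementary observation that $\h$ decomposes as $\h=\bigoplus_{i=0}^{p-1}\h^{(i;\sigma)}$ with $\h^{(i;\sigma)}$ the $\xi_p^{-i}$-eigenspace of $\sigma$, so that $r_i=\dim\h^{(i;\sigma)}$ is precisely the multiplicity of $\xi_p^{-i}$ (equivalently, after reindexing, of $\xi_p^{i}$) as an eigenvalue of $\sigma$ acting on $\h$. Since $\sigma$ preserves $L$, it acts on $L$ by an integer matrix with respect to a $\Z$-basis; hence its characteristic polynomial $\chi_\sigma(x)\in\Z[x]$ has integer coefficients, and in particular it is fixed by the action of $\Gal(\Q(\xi_p)/\Q)$ on its roots.

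Next I would invoke the structure of that Galois group. If $\xi_p^i$ and $\xi_p^j$ are both primitive $k$-th roots of unity for a divisor $k\mid p$, then they are roots of the same cyclotomic polynomial $\Phi_k(x)$, and since $\Gal(\Q(\xi_k)/\Q)$ acts transitively on the primitive $k$-th roots of unity, there is a field automorphism $\tau$ of $\Q(\xi_p)$ (extending one of $\Q(\xi_k)$) with $\tau(\xi_p^i)=\xi_p^j$. Because $\chi_\sigma(x)$ has rational coefficients, $\tau$ permutes its roots while preserving multiplicities; therefore the multiplicity of $\xi_p^i$ in $\chi_\sigma$ equals that of $\xi_p^j$, i.e.\ $r_i=r_j$. (One could phrase the same argument without Galois theory: the minimal polynomial of $\sigma$ over $\Q$ is a product of distinct cyclotomic polynomials $\Phi_d$, and on the $\Phi_k$-isotypic component of $\h$ all primitive $k$-th roots of unity occur with the same multiplicity, namely that component's dimension divided by $\varphi(k)$.)

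I expect the only genuine point requiring care — the ``main obstacle,'' though it is mild — is the bookkeeping of primitive roots versus eigenvalue indexing: one must be careful that the hypothesis ``$\xi_p^i$ is a primitive $k$-th root of unity'' means $\gcd(i,p)=p/k$, and that the eigenvalue indexing $\h^{(i;\sigma)}=\{u\mid\sigma u=\xi_p^{-i}u\}$ matches up consistently, but since $i\mapsto -i$ is a bijection on residues it does not affect the conclusion. Everything else is standard: the passage from ``$\sigma$ acts on a lattice'' to ``$\chi_\sigma\in\Z[x]$'' and the transitivity of the Galois action on primitive $k$-th roots of unity. This is essentially the argument sketched on page~331 of \cite{FLM}, and no further input is needed.
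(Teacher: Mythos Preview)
Your proposal is correct and follows essentially the same line as the paper's proof: both observe that since $\sigma$ preserves the lattice $L$, its characteristic polynomial on $\h$ has integer coefficients, and hence all primitive $k$-th roots of unity occur as eigenvalues with the same multiplicity (the paper states this via the factorization $\det(x-\sigma)=\prod_{k\mid p}\Phi_k(x)^{n_k}$, which is exactly your alternative phrasing). Your Galois-theoretic formulation and the indexing remark are harmless elaborations of the same idea.
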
 

\begin{proof}
By taking a basis of $\h$ consisting of elements in $L$, we see that the characteristic polynomial 
$\det(x-\sigma)$ of $\sigma$ on $\h$ has integer coefficients.
Thus all primitive $k$-th roots of unity occur with the same multiplicity, say $n_k$. 
Namely, we have 
\begin{align}\label{eqn800}
\det(x-\sigma)
= \prod_{i=0}^{p-1}(x-\xi_p^{i})^{r_i}
= \prod_{k|p}\Phi_k(x)^{n_k},
\end{align}  
where $\Phi_k(x)$ is the $k$-th cyclotomic polynomial normalized to be monic. 
If $\xi_p^i$ is a $k$-th root of unity,  then $r_i=n_k$.  
\end{proof}

We continue to use the numbers $n_k$ in \eqref{eqn800} for any divisor $k$ of $p$. 
We note that   
\[
x^k-1=\prod_{d|k}\Phi_d(x).
\]
By Poisson's summation formula, we have 
\[
\Phi_k(x)=\prod_{d|k}(x^d-1)^{\mu(k/d)}
\]
where $\mu(k)\in\Z$ is the M{\"{o}}bius function. 
In particular, for any divisor $d$ of $p$, there exists a unique integer $m_d$ such that  
\begin{equation}\label{eqn746}
\det(x-\sigma)=\prod_{d|p}(x^d-1)^{m_d}
\end{equation} 
by \eqref{eqn800}.  
Comparing the degrees of the polynomials on both sides, we have 
\begin{equation}\label{eqn6000}
\sum_{d|p}d m_d=\ell, 
\end{equation}
where $\ell={\rm rank}\, L$. 
Since $x^d-1=\prod_{j=0}^{d-1}(x-\xi_p^{jp/d})$, \eqref{eqn746} implies that   
\begin{equation*}
\prod_{i=0}^{p-1}(x-\xi_p^{i})^{r_i} 
= \prod_{d|p}\prod_{j=0}^{d-1}(x-\xi_p^{jp/d})^{m_d}.
\end{equation*}  

Since $0\leq \frac{jp}{d}<p$ for $j=0,1,\ldots, d-1$, it follows that  
\begin{equation}\label{dim003}
r_i=\sum_{d|p,p|d i}m_d
\end{equation}
for $i=0,1,\ldots, p-1$. 
In particular, if $\sigma$ is fixed-point-free,  then $r_0=0$ and 
\begin{equation}\label{eqn6001}
\sum_{d|p}m_d=0.  
\end{equation}
In this case, we have 
\begin{equation*}
\det(x-\sigma)
=\prod_{i=1}^{p-1}(x-\xi_p^{i})^{r_i}
=\prod_{d|p}\prod_{j=1}^{d-1}(x-\xi_p^{jp/d})^{m_d}.
\end{equation*}  

\begin{remark}\label{remark1011}
If $p$ is a prime and $\sigma$ is fixed-point-free on $L$, then 
\begin{equation*}
\det(x-\sigma)=\Phi_p(x)^{\ell/(p-1)}=(x-1)^{-\ell/(p-1)}(x^{p}-1)^{\ell/(p-1)}.
\end{equation*}
Hence $m_p=-m_1=\frac{\ell}{p-1}$.   
\end{remark}

Now we consider some functions   
\[
a_{c}(\tau)=\prod_{n=0}^\infty(1-q^{c+n}),\quad q=e^{2\pi \sqrt{-1}\tau}
\]
on the upper half plane $\mathcal{H}$ for $c\in \Q_{>0}$. 
For example,   
\[
a_1(\tau)=\prod_{n=1}^\infty(1-q^{n})=q^{-{1}/{24}}\eta(\tau),
\]
where $\eta(\tau)$ is the Dedekind eta function.   

Suppose $\sigma$ is fixed-point-free on $L$. 
Then by \eqref{dim003} and \eqref{eqn6001},  we have  
\begin{align*}
\prod_{j=1}^{p-1}a_{j/p}(\tau)^{r_j}
&=\prod_{j=1}^{p-1}\prod_{d|p,p|dj}a_{j/p}(\tau)^{m_d}\\    
&=\prod_{d|p}\prod_{j=1}^{d-1}a_{j/d}(\tau)^{m_d}\\
&=\prod_{d|p}\prod_{j=1}^{d-1}\prod_{n=0}^\infty(1-q^{j/d+n})^{m_d}\\
&=\frac{\prod_{d|p}\prod_{n=1}^\infty(1-q^{n/d})^{m_d}}{\prod_{d|p}\prod_{n=1}^\infty (1-q^n)^{m_d}}\\
&=\frac{\prod_{d|p}q^{-{m_d}/{24d}}\eta\left({\tau}/{d}\right)^{m_d}}{a_1(\tau)^{\sum_{d|p}m_d}}\\
&=\prod_{d|p}q^{-{m_d}/{24d}}\eta\left({\tau}/{d}\right)^{m_d}.  
\end{align*}
Consequently, if $\sigma$ is fixed-point-free on $L$, 
then \eqref{char001} implies that   
\begin{align*}
Z_{M(1)(\sigma)}(\tau)
&=q^{\rho(M(1)(\sigma))-{\ell}/{24}}\prod_{d|p}\frac{q^{{m_d}/{24d}}}{\eta(\tau /d)^{m_d}}.
\end{align*}     

We recall the character of $V_{L}$:
\begin{align*}
Z_{V_L}(\tau)=\frac{\Theta_L(\tau)}{\eta(\tau)^\ell},
\end{align*}
where $\Theta_L(\tau)$ is the lattice theta function 
\[
\Theta_L(\tau)=\sum_{\alpha\in L}q^{\frac{\langle \alpha,\alpha\rangle}{2}}.
\]
We also consider the ratio 
\begin{equation}\label{eq:Heisenberg_part}
\frac{Z_{M(1)(\sigma)}(\sqrt{-1}y)}{Z_{V_L}(\sqrt{-1}y)} 
= e^{-2\pi y\left(\rho(M(1)(\sigma))-{\ell}/{24}+\sum_{d|p} {m_d}/{24d}\right)}
\times \frac{\eta(\sqrt{-1}y)^\ell}{\prod_{d|p}\eta(\tau/d)^{m_d}}\frac{1}{\Theta_L(\sqrt{-1} y)}.
\end{equation}

\begin{lemma}\label{lemma8834}
If $\sigma$ is fixed-point-free on $L$, then    
\begin{equation*}
\lim_{y\rightarrow 0^+}\frac{\eta(\sqrt{-1} y)^\ell}{\prod_{d|p}\eta(\tau/d)^{m_d}}
\frac{1}{\Theta_L( \sqrt{-1} y)}
=\frac{v}{\sqrt{\prod_{d|p} d^{m_d}}}, 
\end{equation*}
where $v={\rm vol}(\R^\ell/L)$ is the volume of $L$ in $\R^\ell$.
\end{lemma}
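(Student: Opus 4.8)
The plan is to transport the expression to the cusp at $\sqrt{-1}\infty$ by means of the modular transformation of the Dedekind eta function together with the Jacobi (Poisson summation) transformation of the lattice theta function, and then to extract the limit from the constant terms of the resulting $q$-series. Write $\tau=\sqrt{-1}y$; the computation will rely on the two numerical identities $\sum_{d\mid p}m_d=0$ and $\sum_{d\mid p}d\,m_d=\ell$ established in \eqref{eqn6001} and \eqref{eqn6000}.

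First I would use $\eta(-1/\tau)=\sqrt{-\sqrt{-1}\,\tau}\,\eta(\tau)$. Applied with $\tau=\sqrt{-1}y/d$ for each divisor $d$ of $p$ it gives $\eta(\sqrt{-1}y/d)=(d/y)^{1/2}\eta(\sqrt{-1}d/y)$, and with $d=1$ it gives $\eta(\sqrt{-1}y)^{\ell}=y^{-\ell/2}\eta(\sqrt{-1}/y)^{\ell}$, so that
\[
\frac{\eta(\sqrt{-1}y)^{\ell}}{\prod_{d\mid p}\eta(\sqrt{-1}y/d)^{m_d}}
=\frac{y^{-\ell/2+\frac12\sum_{d\mid p}m_d}}{\prod_{d\mid p}d^{m_d/2}}\cdot
\frac{\eta(\sqrt{-1}/y)^{\ell}}{\prod_{d\mid p}\eta(\sqrt{-1}d/y)^{m_d}},
\]
where the exponent of $y$ is just $-\ell/2$ because $\sum_{d\mid p}m_d=0$. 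For the theta function I would invoke Poisson summation in the form $\Theta_L(\sqrt{-1}y)=\sum_{\alpha\in L}e^{-\pi y\langle\alpha,\alpha\rangle}=(v\,y^{\ell/2})^{-1}\sum_{\beta\in L^{\circ}}e^{-\pi\langle\beta,\beta\rangle/y}=(v\,y^{\ell/2})^{-1}\Theta_{L^{\circ}}(\sqrt{-1}/y)$, with $v={\rm vol}(\R^{\ell}/L)$ and $\Theta_{L^{\circ}}(\tau)=\sum_{\beta\in L^{\circ}}q^{\langle\beta,\beta\rangle/2}$ a (possibly fractional-power) $q$-series. Dividing, the factor $v\,y^{\ell/2}$ cancels the $y^{-\ell/2}$ above, leaving
\[
\frac{\eta(\sqrt{-1}y)^{\ell}}{\prod_{d\mid p}\eta(\sqrt{-1}y/d)^{m_d}}\cdot\frac{1}{\Theta_L(\sqrt{-1}y)}
=\frac{v}{\prod_{d\mid p}d^{m_d/2}}\cdot
\frac{\eta(\sqrt{-1}/y)^{\ell}}{\prod_{d\mid p}\eta(\sqrt{-1}d/y)^{m_d}\,\Theta_{L^{\circ}}(\sqrt{-1}/y)}.
\]

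To finish I would let $y\to 0^{+}$, so $\sqrt{-1}/y\to\sqrt{-1}\infty$. Since $\eta(\tau)=e^{\pi\sqrt{-1}\tau/12}\prod_{n\geq1}(1-e^{2\pi\sqrt{-1}n\tau})$, one has $\eta(\sqrt{-1}d/y)=e^{-\pi d/(12y)}(1+o(1))$ for each $d$, and $\Theta_{L^{\circ}}(\sqrt{-1}/y)=1+o(1)$ because every summand with $\beta\neq0$ tends to $0$; hence the last fraction equals $e^{-\pi(\ell-\sum_{d\mid p}d\,m_d)/(12y)}(1+o(1))$, whose exponent vanishes by $\sum_{d\mid p}d\,m_d=\ell$, so it tends to $1$. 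This yields the claimed limit $v/\prod_{d\mid p}d^{m_d/2}=v/\sqrt{\prod_{d\mid p}d^{m_d}}$. There is no deep obstacle here: the only real care needed is bookkeeping — keeping the volume constant $v$ correct in the theta transformation and checking that the powers of $y$, and then of $e^{-2\pi/y}$, cancel exactly, which is precisely the point where the two identities $\sum_{d\mid p}m_d=0$ and $\sum_{d\mid p}d\,m_d=\ell$ enter.
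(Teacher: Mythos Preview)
Your proof is correct and follows essentially the same approach as the paper: apply the modular transformation $\eta(-1/\tau)=\sqrt{-\sqrt{-1}\,\tau}\,\eta(\tau)$ to each $\eta(\sqrt{-1}y/d)$, apply the Jacobi/Poisson inversion $\Theta_L(\sqrt{-1}y)=v^{-1}y^{-\ell/2}\Theta_{L^\circ}(\sqrt{-1}/y)$ to the theta function, and then use the identities $\sum_{d\mid p}m_d=0$ and $\sum_{d\mid p}d\,m_d=\ell$ from \eqref{eqn6001} and \eqref{eqn6000} to cancel the powers of $y$ and the exponential factors before taking $y\to 0^+$. The only cosmetic difference is that you transform the numerator $\eta(\sqrt{-1}y)^\ell$ explicitly whereas the paper carries it along in expanded form, but the computation is the same.
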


\begin{proof}
It is well known that the theta function satisfies the following $S$-transformation formula 
(cf. \cite{Ebeling13}):
\[
\Theta_L(\sqrt{-1}y)=y^{-\ell/2}v^{-1}\Theta_{L^\circ}(\sqrt{-1}/y). 
\]
Since $\eta(-\tau^{-1})=\sqrt{-\sqrt{-1}\tau}\eta(\tau)$, by setting $\tau=\sqrt{-1}d/y$, we have  
\[
\eta(\sqrt{-1}y/d)
=\sqrt{d/y}\eta(\sqrt{-1}d/y)
=\sqrt{d/y}e^{-2\pi d /y}\prod_{n=1}^{\infty}(1-e^{-2\pi n d/ y}). 
\]
Therefore, we have 
\begin{align*}
&\frac{\eta(\sqrt{-1}y)^\ell}{\prod_{d|p}\eta(\sqrt{-1} y /d)^{m_d}}\frac{1}{\Theta_L(\sqrt{-1}y)}\\
&=\frac{e^{-2\pi\ell/y}v\prod_{n=1}^{\infty}(1-e^{-2\pi n/y})^\ell}{\prod_{d|p}\sqrt{y}^{-m_d}\sqrt{d}^{m_d}e^{-2\pi d m_d/y}\prod_{n=1}^{\infty}(1-e^{-2\pi n d/y})^{m_d}  \Theta_{L^\circ}(\sqrt{-1}/y)}\\
&=\frac{e^{-2\pi\ell/y} v\prod_{n=1}^{\infty}(1-e^{-2\pi n/y})^{\ell}}{\sqrt{y}^{-\sum_{d|p}m_d }e^{-2\pi (\sum_{d|p}d m_d)/y}\sqrt{\prod_{d|p}d^{m_d}}\prod_{n=1}^{\infty}(1-e^{-2\pi n d/y})^{m_d}\Theta_{L^\circ}(\sqrt{-1}/y) }.
\end{align*}    

Since $\sum_{d|p}dm_d=\ell$ and $\sum_{d|p}m_d=0$ 
by \eqref{eqn6000} and \eqref{eqn6001}, we have 
\begin{align*}
&\frac{\eta(\sqrt{-1}y)^\ell}{\prod_{d|p}\eta(\sqrt{-1}y/d)^{m_d}}\frac{1}{\Theta(\sqrt{-1}y)}\\
&= \frac{v\prod_{n=1}^{\infty}(1-e^{-2\pi n/y})^{\ell}}{\sqrt{\prod_{d|p}d^{m_d}}\Theta_{L^\circ}(\sqrt{-1}/y)\prod_{n=1}^{\infty}(1-e^{-2\pi n d/y})^{m_d} }\\
&\rightarrow \frac{v}{\sqrt{\prod_{d|p}d^{m_d}} }
\end{align*}    
as $y\rightarrow 0^+$. 
\end{proof}


\begin{theorem}\label{theoremqdimtw}
Let $L$ be a positive definite even lattice of rank $\ell$ and $p\geq 2 $ an integer.  
Let $\sigma$ be an isometry of $ L$ of order $p$ and assume that $\sigma$ is 
fixed-point-free on  $L$. 
Let $\widehat{\sigma} \in \Aut(V_L)$ be a lift of $\sigma$, 
and $V_L^T$ a $\widehat{\sigma}$-twisted $V_L$-module associated with an 
$\widehat{L}_\sigma$-module $T$ satisfying \eqref{cond003} and \eqref{cond004}.
Then the quantum dimension $\qdim_{V_L}V_L^T$ exists and 
\[
\qdim_{V_L}V_L^T=\frac{v\dim T}{\sqrt{\prod_{d|p}d^{m_d}}},
\]
where $v$ is the volume of $L$ in $\R^\ell$ and $m_d$ are integers given by \eqref{eqn746}.  
\end{theorem}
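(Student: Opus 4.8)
The plan is to compute the limit $\lim_{y\to 0^+} Z_{V_L^T}(\sqrt{-1}y)/Z_{V_L}(\sqrt{-1}y)$ directly, using the factorization $V_L^T = M(1)(\sigma)\otimes T$ as a vector space. Since $T$ is a finite-dimensional module on which the grading is concentrated in a single lowest conformal weight $\rho(V_L^T)$, the character of $V_L^T$ is simply $Z_{V_L^T}(\tau) = (\dim T)\, q^{\rho(V_L^T)-\rho(M(1)(\sigma))} Z_{M(1)(\sigma)}(\tau)$; more precisely, the $T$-factor contributes $(\dim T) q^{\text{shift}}$ where the exponent shift is a fixed rational number. Thus
\[
\frac{Z_{V_L^T}(\sqrt{-1}y)}{Z_{V_L}(\sqrt{-1}y)}
= (\dim T)\, e^{-2\pi y(\text{shift})}\cdot \frac{Z_{M(1)(\sigma)}(\sqrt{-1}y)}{Z_{V_L}(\sqrt{-1}y)}.
\]

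Next I would invoke the computation of $Z_{M(1)(\sigma)}(\tau)$ carried out just before the theorem, together with the explicit expression \eqref{eq:Heisenberg_part} for the ratio $Z_{M(1)(\sigma)}(\sqrt{-1}y)/Z_{V_L}(\sqrt{-1}y)$. That ratio splits into an exponential prefactor $e^{-2\pi y(\cdots)}$, which tends to $1$ as $y\to 0^+$, times the product of eta-quotients and the inverse theta function, whose limit is exactly the content of Lemma \ref{lemma8834}. Combining these, all exponential prefactors (the one from $T$, the one from $\rho(M(1)(\sigma))$, and the one in \eqref{eq:Heisenberg_part}) go to $1$, and what survives is $(\dim T)$ times $v/\sqrt{\prod_{d|p}d^{m_d}}$, which is the claimed formula. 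In particular the limit exists, so $\qdim_{V_L}V_L^T$ is well-defined.

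The one point requiring a little care — and the only real obstacle — is verifying that the exponential prefactor coming from the lowest conformal weight of $V_L^T$ and the one already absorbed into \eqref{eq:Heisenberg_part} combine to give an overall factor that tends to $1$ rather than to $0$ or $\infty$. This amounts to checking that the total exponent of $q$ (equivalently of $e^{-2\pi y}$) in $Z_{V_L^T}(\sqrt{-1}y)$, after dividing by $Z_{V_L}(\sqrt{-1}y)$ and after the $S$-transformation used in Lemma \ref{lemma8834}, vanishes in the limit; since every such prefactor is of the form $e^{-2\pi y \cdot (\text{const})}$ with a finite constant, each individually tends to $1$, so this is automatic and no cancellation among constants is actually needed. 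Thus the proof is essentially bookkeeping: assemble \eqref{char001}, the displayed formula for $Z_{M(1)(\sigma)}(\tau)$, \eqref{eq:Heisenberg_part}, and Lemma \ref{lemma8834}, and read off the answer.
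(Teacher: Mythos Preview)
Your proposal is correct and follows essentially the same approach as the paper: write $Z_{V_L^T}=(\dim T)\cdot Z_{M(1)(\sigma)}$ (up to a $q$-power), invoke \eqref{eq:Heisenberg_part}, observe that the exponential prefactor tends to $1$, and apply Lemma~\ref{lemma8834}. One small simplification you could make: since $\sigma$ is fixed-point-free, $\mathfrak{h}^{(0;\sigma)}=0$ and $L_0$ acts on $1\otimes T$ by the scalar $\rho(M(1)(\sigma))$, so the ``shift'' $\rho(V_L^T)-\rho(M(1)(\sigma))$ is actually zero and $Z_{V_L^T}(\tau)=(\dim T)\,Z_{M(1)(\sigma)}(\tau)$ exactly, which is what the paper uses.
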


\begin{proof}
The term $e^{-2\pi y\left(\rho(M(1)(\sigma))-{\ell}/{24}+\sum_{d|p}{m_d}/{24d}\right)}$ 
in \eqref{eq:Heisenberg_part} tends to $1$ as $y \to 0^+$. 
Since $V_L^T = M(1)(\sigma) \otimes T$, 
we have 
\[
\qdim_{V_L} V_L^T
=\lim_{y\rightarrow 0^+}\frac{Z_{M(1)(\sigma)}(\sqrt{-1}y)\dim T}{Z_{V_L}(\sqrt{-1}y)}
=\frac{v\dim T}{\sqrt{\prod_{d|p}d^{m_d}}}
\]
by \eqref{eq:Heisenberg_part} and Lemma \ref{lemma8834}. 
\end{proof}  

As a collorary, we have 

\begin{corollary}\label{cor973}
Let $L$, $p$, $\sigma$ and $T$ be as in Theorem \ref{theoremqdimtw}.
Assume that $p$ is a prime and that $T$ is irreducible. 
Then 
\begin{equation*}
(\qdim_{V_L} V_L^T)^2
=p^{-\ell/{(p-1)}}{|L^\circ/L||L/R_L^\sigma|}
=p^{-\ell/{(p-1)}}{|L^\circ/R_L^\sigma|}.  
\end{equation*}   
\end{corollary}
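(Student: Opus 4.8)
The plan is to combine Theorem~\ref{theoremqdimtw} with the two elementary identities for the invariants $m_d$ in the prime case, together with the dimension formula \eqref{dim002} and Lemma~\ref{lem:radical_general}. Concretely, since $p$ is prime, Remark~\ref{remark1011} gives $m_p = -m_1 = \ell/(p-1)$ and $m_d = 0$ for every other divisor $d$ of $p$ (the only divisors being $1$ and $p$). Hence
\[
\prod_{d|p} d^{m_d} = 1^{m_1} \cdot p^{m_p} = p^{\ell/(p-1)}.
\]
Plugging this into the formula of Theorem~\ref{theoremqdimtw} yields
\[
\qdim_{V_L} V_L^T = \frac{v \dim T}{p^{\ell/(2(p-1))}},
\]
so squaring gives $(\qdim_{V_L} V_L^T)^2 = v^2 (\dim T)^2 / p^{\ell/(p-1)}$.

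The next step is to identify the two remaining factors $v^2$ and $(\dim T)^2$ in lattice-theoretic terms. For the volume, the standard fact $v = \mathrm{vol}(\R^\ell/L) = |L^\circ/L|^{1/2}$ (equivalently $\det L = |L^\circ/L|$) gives $v^2 = |L^\circ/L|$. For the dimension of $T$, since $T$ is assumed irreducible, \eqref{dim002} gives $(\dim T)^2 = [L : R_L^\sigma] = |L/R_L^\sigma|$. Substituting both,
\[
(\qdim_{V_L} V_L^T)^2 = p^{-\ell/(p-1)}\, |L^\circ/L|\, |L/R_L^\sigma|,
\]
which is the first claimed equality.

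Finally, the second equality $|L^\circ/L|\,|L/R_L^\sigma| = |L^\circ/R_L^\sigma|$ is just multiplicativity of indices for the chain $R_L^\sigma \subseteq L \subseteq L^\circ$: one has $|L^\circ/R_L^\sigma| = |L^\circ/L|\cdot|L/R_L^\sigma|$, which is valid because $R_L^\sigma \subseteq L$ (indeed $R_L^\sigma = ((1-\sigma)L^\circ)\cap L$ by Lemma~\ref{lem:radical_general}, so in particular it is a finite-index subgroup of $L$, as $\sigma$ is fixed-point-free so $1-\sigma$ is invertible over $\Q$). This completes the proof.

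I expect no serious obstacle here: the corollary is essentially a specialization-and-bookkeeping exercise. The only points that require a moment's care are recalling the normalization $v^2 = |L^\circ/L|$ for an integral lattice and making sure the index chain is genuinely a chain of subgroups — both of which are immediate from the material already set up (Lemma~\ref{lem:radical_general} for the inclusion $R_L^\sigma \subseteq L$, and Remark~\ref{remark1011} for the values of $m_d$).
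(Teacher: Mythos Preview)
Your proof is correct and follows essentially the same approach as the paper's own proof: both combine Theorem~\ref{theoremqdimtw} with Remark~\ref{remark1011} (to evaluate $\prod_{d|p}d^{m_d}=p^{\ell/(p-1)}$), the identity $v^2=|L^\circ/L|$, and \eqref{dim002} for $(\dim T)^2=|L/R_L^\sigma|$. One minor remark: the inclusion $R_L^\sigma\subseteq L$ is immediate from the definition of $R_L^\sigma$ as the radical of $c^\sigma$ in $L$, so invoking Lemma~\ref{lem:radical_general} for this is not needed.
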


\begin{proof}
It is well known that $v^2=|L^\circ/L|$ for an integral lattice $L$.
By \eqref{dim002}, $(\dim T)^2=|L/R_L^\sigma|$. 
Since $m_p=\ell/{(p-1)}$ by Remark \ref{remark1011}, the assertion follows from 
Theorem \ref{theoremqdimtw}.  
\end{proof}

\begin{remark}
Under the assumption in Corollary \ref{cor973}, we have $L/(1-\sigma)(L)\cong \Z_p^{\ell/(p-1)}$ 
by \cite[Lemma A.1]{GriessLam11} and we can easily check that 
\[
\sum(\qdim_{V_L} V_L^T)^2=|L^\circ/L|,
\]
where $T$ runs through all inequivalent irreducible $\widehat{L}_\sigma$-modules 
satisfying \eqref{cond003} and \eqref{cond004} on the left hand side.  
The number on the right hand side is the global dimension of $V_L$ (see \cite{DongRenXu15}).
\end{remark}

\section{Extensions of a direct sum of the lattice $\sqrt{2}A_{p-1}$}\label{Sect4}

In this section, we describe some lattices between a direct sum of the lattice 
$\sqrt{2}A_{p-1}$ and its dual lattice by using certain codes, 
where $p \ge 3$ is an odd integer.  

\subsection{Codes associated with the lattice $\sqrt{2}A_{p-1}$}\label{Sect4.1}

First, we review some properties of the lattice $\sqrt{2}A_{p-1}$ 
for an odd integer $p \ge 3$, which will be used to develop a theory of code associated with 
the lattice. 

Let $N=\bigoplus_{i=1}^{p-1}\Z \beta_i$ be a positive definite even lattice 
with a base $\{\beta_i\}$ satisfying   
\begin{equation}\label{eq:bibj}
\langle \beta_i,\beta_j\rangle=
\begin{cases}4&\text{if }i=j,\\-2&\text{if }|i-j|=1,\\0 &\text{otherwise.}
\end{cases}
\end{equation}   
The lattice $N$ is usually written as $\sqrt{2} A_{p-1}$ and realized in the Euclidean space 
$\R^{p}$ by setting $\beta_i=\varepsilon_i-\varepsilon_{i+1}$ for $i=1,\ldots, p-1$, where  
\begin{equation}\label{unit001}
\varepsilon_i=(0, \ldots, 0, \sqrt{2}, 0, \ldots, 0)
\end{equation}
and  $\sqrt{2}$ is in the $i$-th entry.   
We set $\varepsilon_0=\varepsilon_p$, $\beta_0=\varepsilon_p-\varepsilon_1$ 
and sometimes regard the indices of $\beta$ and $\varepsilon$ as elements in $\Z_p$.
It is clear that $\beta_0=-\sum_{i=1}^{p-1}\beta_i$. 
  
Set  
\begin{equation}\label{gamma001}
\gamma
=\frac{1}{p}\sum_{i=1}^{p-1} i\beta_i
=\frac{1}{p}(\varepsilon_1 + \cdots + \varepsilon_p) - \varepsilon_p
\end{equation}  
One has 
\begin{equation}\label{inner002}
\langle \gamma,\beta_i\rangle=0, \ 1 \le i \le p-2, 
\quad \langle \gamma,\beta_{p-1}\rangle=-\langle \gamma,\beta_{0}\rangle=2,
\quad \langle \gamma,\gamma\rangle=\frac{2(p-1)}{p}. 
\end{equation}   
Hence $\gamma\in N^\circ$, and $N+\frac{1}{2}\beta_i$ $(i=1,\ldots, p-1)$ and $N+\gamma$ 
generate a subgroup of $N^\circ/N$ isomorphic to $\Z_2^{p-1}\times\Z_p$. 
On the other hand, 
\begin{equation*}
|N^\circ /N|=\det([\langle \beta_i,\beta_j\rangle]_{1\leq i,j \leq p-1})=2^{p-1}\det A=2^{p-1}p,
\end{equation*}  
where $A$ is the Cartan matrix of type $A_{p-1}$:
\[
A=\begin{pmatrix}
2&-1& & \\-1&2&-1& \\ &\ddots&\ddots&\ddots \\& &-1&2&-1 \\& & &-1&2
\end{pmatrix}. 
\]
Thus $N+\frac{1}{2}\beta_i$ $(i=1,\ldots, p-1)$ and $N+\gamma$   
actually form a set of generators of $N^\circ/N$.  

For $u=(u_1,\ldots, u_{p-1})\in \Z^{p-1}$ and $a\in\Z$, set   
\begin{equation}\label{eq:beta_u_a-1}
\beta_{u,a}=\frac{1}{2}\sum_{i=1}^{p-1}u_i\beta_i+a\gamma.  
\end{equation}  
We also set 
\[
L(\overline{u},\overline{a})=N+\beta_{u,a} 
\]
for $\overline{u}=(\overline{u_1},\ldots,\overline{u_{p-1}})\in\Z_2^{p-1}$ 
with $\overline{u_i}=u_i+2\Z\in \Z_2$ and $\overline{a}=a+p\Z\in \Z_p$. 
Then we have an isomorphism
\begin{equation*}
\Z_2^{p-1}\times \Z_{p} \to N^\circ/N; \quad 
(\overline{u},\overline{a}) \mapsto L(\overline{u},\overline{a})
\end{equation*}
of  $\Z$-modules.  

Observe that for $u,v\in\Z^{p-1}$ and $a,b\in\Z$, 
\begin{equation}\label{inner001}
\langle\beta_{u,a},\beta_{v,b}\rangle
= \frac{1}{2}uAv^t + a v_{p-1} + u_{p-1} b + \frac{2(p-1)}{p}ab   
\end{equation}
by \eqref{eq:bibj}, \eqref{inner002} and \eqref{eq:beta_u_a-1}.  
In particular,
\begin{equation}\label{eq:two_inner_prod_values}
\langle\beta_{u,0},\beta_{v,0}\rangle
= \frac{1}{2}uAv^t, \quad 
\langle\beta_{0,a},\beta_{0,b}\rangle
= ab \langle \gamma, \gamma \rangle = \frac{2(p-1)}{p}ab.      
\end{equation}

We see from \eqref{inner001} that
\begin{align}
\langle\beta_{u,a},\beta_{v,b}\rangle
&\equiv \langle\beta_{u,0},\beta_{v,0}\rangle - \frac{2}{p}ab \pmod{\Z},
\label{eq:inner_prod_mod_int}\\
\langle\beta_{u,a},\beta_{u,a}\rangle
&\equiv \langle\beta_{u,0},\beta_{u,0}\rangle - \frac{2}{p}a^2 \pmod{2\Z}.
\label{eq:inner_prod_mod_2}
\end{align}

Since $uAv^t$ is an even integer if $u$ or $v$ lie in $(2\Z)^{p-1}$, 
we have an inner product on $\Z_2^{p-1}$ defined by
\begin{equation}\label{inner003}
\overline{u}\cdot_{A}\overline{v}
=uAv^t + 2\Z
=2\langle\beta_{u,0},\beta_{v,0}\rangle+2\Z \in\Z_2. 
\end{equation}    
That is, $\overline{u}\cdot_{A}\overline{v}=\overline{u}\overline{A}\overline{v}^t$, where 
\[
\overline{A}=\begin{pmatrix}
\overline{0}&\overline{1}& && & \\
\overline{1}&\overline{0}&\overline{1}&&  \\
 &\ddots&\ddots&\ddots& & \\
 & &\overline{1}&\overline{0}&\overline{1}\\
 & & &\overline{1}&\overline{0}
\end{pmatrix}.
\] 
Since $\det A=p\equiv 1$ (mod  $2\Z$), this inner product is nondegenerate. 

We also consider an inner product on $\Z_{p}$ defined by 
\begin{equation}\label{eq:inner_prod_in_Zp}
\overline{a}\cdot_{-2}\overline{b} = -2ab+p\Z 
= p \langle \beta_{0,a}, \beta_{0,b} \rangle + p\Z \in \Z_p.    
\end{equation}  

Note that $\langle \alpha, \alpha \rangle \in \Z$ for $\alpha \in \frac{1}{2}N$. 
Hence $\langle\beta_{u,0},\beta_{u,0}\rangle \in \Z$ for $u \in \Z^{p-1}$. 
Note also that for $\alpha, \beta \in N^\circ$ with $\alpha - \beta \in N$, we have 
$\langle \alpha, \alpha \rangle \equiv \langle \beta, \beta \rangle \pmod{2\Z}$.
We  introduce a weight function $w$ on $\Z_2^{p-1}\times \Z_p$ by  
\begin{equation}\label{eq:weight_kl}  
w(\overline{u},\overline{a})
= \min\{\langle x,x\rangle|\,x\in L(\overline{u},\overline{a})\} \in \frac{1}{p}\Z
\end{equation}
for $(\overline{u},\overline{a})\in \Z_2^{p-1}\times \Z_p$. 
Then
\begin{equation}\label{eq:w_equiv_mod_2}
w(\overline{u},\overline{a})
\equiv \langle x, x \rangle \pmod{2\Z} \quad \text{for } x \in L(\overline{u},\overline{a}).
\end{equation}


In view of \eqref{eq:w_equiv_mod_2}, we define a quadratic form $q : \Z_2^{p-1} \to \Z_2$ by 
\begin{equation}\label{eq:q_on_k}
q(\overline{u}) = w(\overline{u},\overline{0}) + 2\Z = \langle \beta_{u,0}, \beta_{u,0} \rangle + 2\Z
\end{equation}
for $u \in \Z^{p-1}$. 
Then 
\begin{equation}\label{eq:q_on_k_2}
q(\overline{u}) = \frac{1}{2} u A u^t + 2\Z
\end{equation}
by \eqref{eq:two_inner_prod_values} and 
\begin{equation}\label{eq:q_on_k_inner}
q(\overline{u} + \overline{v}) - q(\overline{u}) - q(\overline{v}) = \overline{u} \cdot_{A} \overline{v} 
\quad \text{for } \overline{u}, \overline{v} \in \Z_2^{p-1}.
\end{equation}

We have a $\Z_2$-code $\Z_2^{p-1}$ with an inner product $\cdot_{A}$ and a weight function 
$w(\,\cdot\,,\overline{0})$ and a $\Z_p$-code $\Z_{p}$ with an inner product $\cdot_{-2}$ and 
a weight function $w(\overline{0},\,\cdot\,)$. 
We denote the codes $\Z_2^{p-1}$ and $\Z_p$ by $k,l$, respectively.

Let $d\in\Zplus$. 
We consider nondegenerate inner products on $k^d$, $l^d$ and $k^d\times l^d$ defined by  
\begin{equation}\label{eq:inner_prod_kdld}  
\begin{split}
\overline{\huto{u}} \cdot \overline{\huto{v}} 
= \sum_{i=1}^{d}\overline{{u}_i}\cdot_A\overline{{v}_i}  \in \Z_{2}, \quad 
\overline{\huto{a}} \cdot \overline{\huto{b}}
= \sum_{i=1}^{d}\overline{{a}_i}\cdot_{-2}\overline{{b}_i}  \in \Z_{p},\\
(\overline{\huto{u}},\overline{\huto{a}})\cdot (\overline{\huto{v}},\overline{\huto{b}})
=\sum_{i=1}^{d}(\overline{{u}_i}\cdot_A\overline{{v}_i},
\overline{{a}_i}\cdot_{-2}\overline{{b}_i})\in \Z_{2}\times\Z_p
\end{split}
\end{equation}
for $\overline{\huto{u}}=(\overline{u_1},\ldots,\overline{u_d})$, 
$\overline{\huto{v}}=(\overline{v_1},\ldots,\overline{v_d})\in k^d$ and 
$\overline{\huto{a}}=(\overline{a_1},\ldots,\overline{a_d})$, 
$\overline{\huto{b}}=(\overline{b_1},\ldots,\overline{b_d})\in l^d$.

We also define a weight function $w$ on $ \in k^d\times l^d$ by 
\begin{equation}\label{eq:weight_kdld}  
w(\overline{\huto{u}},\overline{\huto{a}})=\sum_{i=1}^{d}w(\overline{u_i},\overline{a_i})
\end{equation}
and a quadratic form $q : k^d \to \Z_2$ by
\begin{equation}\label{eq:q_on_kd}
q(\overline{\huto{u}}) = w(\overline{\huto{u}},\overline{\huto{0}}) + 2\Z 
= \langle \beta(\huto{u}, \huto{0}), \beta(\huto{u}, \huto{0}) \rangle + 2\Z
\end{equation}
for $\overline{\huto{u}}\in k^d$ and $\overline{\huto{a}}\in l^d$. 
It follows from \eqref{eq:q_on_k_inner} that
\begin{equation}\label{eq:q_on_kd_inner}
q(\overline{\huto{u}} + \overline{\huto{v}}) - q(\overline{\huto{u}}) - q(\overline{\huto{v}}) 
= \overline{\huto{u}} \cdot \overline{\huto{v}} 
\quad \text{for } \overline{\huto{u}}, \overline{\huto{v}} \in k^d.
\end{equation}



For any subset $\mathcal{E}\subset k^d\times l^d$, we set 
\[
\mathcal{E}^\perp=\{(\overline{\huto{u}},\overline{\huto{a}})\in k^d\times l^d|
(\overline{\huto{u}},\overline{\huto{a}})\cdot\mathcal{E}=\{0\}\}.
\]    
Similarly, we define $\mathcal{C}^\perp$ (resp. $\mathcal{D}^\perp$) for a subset 
$\mathcal{C} \subset k^d$ (resp. $\mathcal{D} \subset l^d$). 
We define self-orthogonality and self-duality in a usual manner.
Since $2$ and $p$ are mutually prime, we have the following lemma. 

\begin{lemma}\label{lem2-3}
For any  $\Z$-submodule $\mathcal{E}\subset k^d\times l^d$, there are  $\Z$-submodules 
$\mathcal{C}\subset k^d$ and $\mathcal{D}\subset l^d$ such that 
$\mathcal{E}=\mathcal{C}\times\mathcal{D}$. 
\end{lemma}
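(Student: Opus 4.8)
The plan is to exploit the coprimality of $2$ and $p$ to split $\mathcal{E}$ into its ``$2$-part'' and its ``$p$-part''. First I would observe that $k^d\times l^d = \Z_2^{d(p-1)}\times \Z_p^d$ is a finite abelian group whose order is $2^{d(p-1)}p^d$, so it has a canonical primary decomposition: the $2$-torsion subgroup is exactly $k^d\times\{0\}$ and the $p$-torsion subgroup is exactly $\{0\}\times l^d$. Since $\gcd(2,p)=1$, multiplication by $p$ is an automorphism of $k^d$ and multiplication by $2$ is an automorphism of $l^d$; equivalently, the idempotents $e_2,e_p\in\Z$ with $e_2\equiv 1\pmod{2^{d(p-1)}}$, $e_2\equiv 0\pmod{p^d}$ (and $e_p=1-e_2$) act on $k^d\times l^d$ as the two projections $\pi_k:(\overline{\huto{u}},\overline{\huto{a}})\mapsto(\overline{\huto{u}},\overline{\huto{0}})$ and $\pi_l:(\overline{\huto{u}},\overline{\huto{a}})\mapsto(\overline{\huto{0}},\overline{\huto{a}})$.

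Next, the key point is that any $\Z$-submodule $\mathcal{E}$ is automatically stable under these projections, because $\pi_k = $ multiplication by the integer $e_2$ and $\pi_l=$ multiplication by $e_p$, and a $\Z$-submodule is by definition closed under multiplication by integers. Hence $\pi_k(\mathcal{E})\subset\mathcal{E}$ and $\pi_l(\mathcal{E})\subset\mathcal{E}$, and since $\pi_k+\pi_l=\id$ on $k^d\times l^d$ we get $\mathcal{E}=\pi_k(\mathcal{E})\oplus\pi_l(\mathcal{E})$ as an internal direct sum. Setting $\mathcal{C}=\pi_k(\mathcal{E})\subset k^d$ (identifying $k^d$ with $k^d\times\{0\}$) and $\mathcal{D}=\pi_l(\mathcal{E})\subset l^d$, we obtain $\mathcal{E}=\mathcal{C}\times\mathcal{D}$. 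One should check the trivial facts that $\mathcal{C}$ and $\mathcal{D}$ are themselves $\Z$-submodules (immediate, since images of submodules under $\Z$-linear maps are submodules) and that $\mathcal{C}\times\{0\}$ and $\{0\}\times\mathcal{D}$ really are the $2$-primary and $p$-primary components of $\mathcal{E}$.

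This argument is essentially routine once the primary decomposition is invoked, so I do not anticipate a genuine obstacle; the only thing to be a little careful about is the bookkeeping identifying $k^d$ with the $2$-torsion of $k^d\times l^d$ and $l^d$ with the $p$-torsion, and confirming that the projections $\pi_k,\pi_l$ coincide with multiplication by suitable integers so that submodule-stability is automatic. An even shorter phrasing: an element $x=(\overline{\huto{u}},\overline{\huto{a}})\in\mathcal{E}$ satisfies $2^{d(p-1)}x=(\overline{\huto{0}},2^{d(p-1)}\overline{\huto{a}})\in\mathcal{E}$ and, since $2^{d(p-1)}$ is invertible mod $p$, this shows $(\overline{\huto{0}},\overline{\huto{a}})\in\mathcal{E}$; subtracting gives $(\overline{\huto{u}},\overline{\huto{0}})\in\mathcal{E}$ as well. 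Thus $\mathcal{E}$ contains the $k^d$- and $l^d$-components of each of its elements, which is exactly the assertion $\mathcal{E}=\mathcal{C}\times\mathcal{D}$ with $\mathcal{C}$, $\mathcal{D}$ the respective projections.
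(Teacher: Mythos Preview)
Your argument is correct and is essentially the same approach as the paper's: both exploit the coprimality of $2$ and $p$ to project $\mathcal{E}$ onto its $k^d$- and $l^d$-components via multiplication by suitable integers. The paper phrases it even more tersely, taking $\mathcal{C}=p\mathcal{E}$ and $\mathcal{D}=2\mathcal{E}$ directly (since $p$ acts as the identity on $k^d$ and as zero on $l^d$, while $2$ kills $k^d$ and is an automorphism of $l^d$), which is a slight streamlining of your idempotent/CRT formulation.
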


In fact, $\mathcal{C}=p\mathcal{E}$ and $\mathcal{D}=2\mathcal{E}$.  
The following proposition is clear from Lemma \ref{lem2-3} and the definition 
\eqref{eq:inner_prod_kdld} of the inner products.  

\begin{proposition}\label{dual001}
Let $\mathcal{C}\subset k^d$ and $\mathcal{D}\subset l^d$ be  $\Z$-submodules.

$(1)$ $(\mathcal{C}\times\mathcal{D})^\perp=\mathcal{C}^\perp\times \mathcal{D}^\perp$.

$(2)$ $\mathcal{C}\times\mathcal{D}$ is self-dual in $k^d \times l^d$ if and only if 
$\mathcal{C}$ and $\mathcal{D}$ are self-dual in $k^d$ and $l^d$, respectively.  
\end{proposition}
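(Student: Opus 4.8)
The plan is to deduce everything from Lemma~\ref{lem2-3} together with the product structure of the inner product on $k^d \times l^d$. First I would prove part (1). Given $\Z$-submodules $\mathcal{C} \subset k^d$ and $\mathcal{D} \subset l^d$, I would show directly that $(\mathcal{C} \times \mathcal{D})^\perp = \mathcal{C}^\perp \times \mathcal{D}^\perp$. The inclusion $\supseteq$ is immediate: if $(\overline{\huto{u}},\overline{\huto{a}}) \in \mathcal{C}^\perp \times \mathcal{D}^\perp$, then for any $(\overline{\huto{v}},\overline{\huto{b}}) \in \mathcal{C} \times \mathcal{D}$, formula \eqref{eq:inner_prod_kdld} gives $(\overline{\huto{u}},\overline{\huto{a}}) \cdot (\overline{\huto{v}},\overline{\huto{b}}) = (\overline{\huto{u}} \cdot \overline{\huto{v}}, \overline{\huto{a}} \cdot \overline{\huto{b}}) = (0,0)$ since $\overline{\huto{u}} \cdot \overline{\huto{v}} = 0$ in $\Z_2$ (as $\overline{\huto{v}} \in \mathcal{C}$, $\overline{\huto{u}} \in \mathcal{C}^\perp$) and $\overline{\huto{a}} \cdot \overline{\huto{b}} = 0$ in $\Z_p$. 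For the reverse inclusion, I would take $(\overline{\huto{u}},\overline{\huto{a}}) \in (\mathcal{C} \times \mathcal{D})^\perp$ and test it against elements of the form $(\overline{\huto{v}}, \overline{\huto{0}})$ with $\overline{\huto{v}} \in \mathcal{C}$ — these lie in $\mathcal{C} \times \mathcal{D}$ since $\overline{\huto{0}} \in \mathcal{D}$ — forcing $\overline{\huto{u}} \cdot \overline{\huto{v}} = 0$, hence $\overline{\huto{u}} \in \mathcal{C}^\perp$; symmetrically, testing against $(\overline{\huto{0}}, \overline{\huto{b}})$ with $\overline{\huto{b}} \in \mathcal{D}$ gives $\overline{\huto{a}} \in \mathcal{D}^\perp$.

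For part (2), I would argue as follows. Suppose $\mathcal{C}$ and $\mathcal{D}$ are self-dual, i.e.\ $\mathcal{C}^\perp = \mathcal{C}$ and $\mathcal{D}^\perp = \mathcal{D}$. Then by part (1), $(\mathcal{C} \times \mathcal{D})^\perp = \mathcal{C}^\perp \times \mathcal{D}^\perp = \mathcal{C} \times \mathcal{D}$, so $\mathcal{C} \times \mathcal{D}$ is self-dual. Conversely, suppose $\mathcal{C} \times \mathcal{D}$ is self-dual. By Lemma~\ref{lem2-3} applied to the $\Z$-submodule $(\mathcal{C} \times \mathcal{D})^\perp \subset k^d \times l^d$, and by part (1), we have $\mathcal{C}^\perp \times \mathcal{D}^\perp = (\mathcal{C} \times \mathcal{D})^\perp = \mathcal{C} \times \mathcal{D}$. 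Projecting onto the two factors — or simply using that a product decomposition $X \times Y = X' \times Y'$ of subsets of $k^d \times l^d$ with all four being $\Z$-submodules forces $X = X'$ and $Y = Y'$ (intersect with $k^d \times \{0\}$ and $\{0\} \times l^d$ respectively) — yields $\mathcal{C}^\perp = \mathcal{C}$ and $\mathcal{D}^\perp = \mathcal{D}$.

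I do not anticipate a serious obstacle here; the statement is essentially bookkeeping once Lemma~\ref{lem2-3} is available. The only point requiring a moment of care is the uniqueness of the product decomposition in the converse direction of (2): one must observe that since $\mathcal{C}^\perp \subset k^d$ and $\mathcal{D}^\perp \subset l^d$ while $\mathcal{C} \subset k^d$ and $\mathcal{D} \subset l^d$, the equality $\mathcal{C}^\perp \times \mathcal{D}^\perp = \mathcal{C} \times \mathcal{D}$ inside $k^d \times l^d$ can be restricted to each coordinate subspace $k^d \times \{\overline{\huto{0}}\}$ and $\{\overline{\huto{0}}\} \times l^d$ to recover the factor-wise equalities. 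Alternatively one can simply note $\mathcal{C} = p(\mathcal{C} \times \mathcal{D})$ and $\mathcal{C}^\perp = p(\mathcal{C}^\perp \times \mathcal{D}^\perp)$ (as in the remark following Lemma~\ref{lem2-3}), so the two are equal. This makes the whole proof a couple of lines, which matches the paper's assertion that the proposition ``is clear.''
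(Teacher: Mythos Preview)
Your proposal is correct and matches the paper's approach exactly: the paper gives no detailed proof, stating only that the proposition ``is clear from Lemma~\ref{lem2-3} and the definition \eqref{eq:inner_prod_kdld} of the inner products,'' and your write-up simply unpacks that remark. The one redundancy is invoking Lemma~\ref{lem2-3} in the converse of (2), since part (1) already hands you the product form of $(\mathcal{C}\times\mathcal{D})^\perp$; but this is harmless.
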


For $\huto{u}=(u_1,\ldots,u_d)\in(\Z^{p-1})^d$ and $\huto{a}=(a_1,\ldots,a_d)\in\Z^d$, we set
\[
\beta({\huto{u},\huto{a}})=(\beta_{u_1,a_1},\ldots, \beta_{u_d,a_d})\in (N^\circ)^d
\]
and 
\[
L{(\overline{\huto{u}},\overline{\huto{a}})}=N^{d}+\beta(\huto{u},\huto{a})\subset (N^{\circ})^d,
\] 
where $(N^\circ)^d$ is the orthogonal sum of $d$ copies of $N^\circ$. 
We then have a sublattice  
\[
L_{\mathcal{E}}=\bigcup_{(\overline{\huto{u}},
\overline{\huto{a}})\in\mathcal{E}}L(\overline{\huto{u}},\overline{\huto{a}})
\]
of $(N^\circ)^d$ containing $N^d$ for a $\Z$-submodule $\mathcal{E} \subset k^d \times l^d$. 
Note that $L_{\mathcal{E}}/N^d\cong \mathcal{E}$ as $\Z$-modules. 

\begin{proposition}\label{dual002}
$(L_{\mathcal{E}})^\circ=L_{\mathcal{E}^\perp}$ for any $\Z$-submodule 
$\mathcal{E}\subset k^d\times l^d$.
\end{proposition}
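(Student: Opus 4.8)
The plan is to prove the two inclusions $(L_{\mathcal{E}})^\circ \subset L_{\mathcal{E}^\perp}$ and $L_{\mathcal{E}^\perp} \subset (L_{\mathcal{E}})^\circ$ separately, after first reducing to a statement about the finite quadratic/bilinear data on $N^\circ/N$. First I would record the general principle: since $N^d \subset L_{\mathcal{E}} \subset (N^\circ)^d$ and $N^d$ is even (hence $(N^d)^\circ = (N^\circ)^d$ and $N^d \subset (N^d)^\circ$), taking duals reverses inclusions, so $N^d \subset (L_{\mathcal{E}})^\circ \subset (N^\circ)^d$; therefore $(L_{\mathcal{E}})^\circ$ is itself of the form $L_{\mathcal{F}}$ for a unique $\Z$-submodule $\mathcal{F} \subset k^d \times l^d$, namely $\mathcal{F} = (L_{\mathcal{E}})^\circ / N^d$. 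So the claim reduces to showing $\mathcal{F} = \mathcal{E}^\perp$ as subsets of $k^d\times l^d \cong (N^\circ)^d/N^d$.

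Next I would make the link between the lattice pairing on $(N^\circ)^d$ (mod $\Z$) and the code inner product on $k^d\times l^d$ explicit. By \eqref{eq:inner_prod_mod_int} and the definitions \eqref{inner003}, \eqref{eq:inner_prod_in_Zp}, \eqref{eq:inner_prod_kdld}, for $\beta(\huto{u},\huto{a})$ and $\beta(\huto{v},\huto{b})$ representing classes $(\overline{\huto{u}},\overline{\huto{a}})$ and $(\overline{\huto{v}},\overline{\huto{b}})$ one has
\[
\langle \beta(\huto{u},\huto{a}), \beta(\huto{v},\huto{b}) \rangle
\equiv \tfrac12\bigl(\overline{\huto{u}}\cdot\overline{\huto{v}}\bigr) - \tfrac1p\bigl(\overline{\huto{u}}',\overline{\huto{v}}'\text{-part}\bigr) \pmod{\Z},
\]
i.e. the fractional part of the inner product is determined by the pair of code inner products $\overline{\huto{u}}\cdot\overline{\huto{v}} \in \Z_2$ and $\overline{\huto{a}}\cdot\overline{\huto{b}} \in \Z_p$, and it vanishes mod $\Z$ precisely when \emph{both} code inner products vanish (here the coprimality of $2$ and $p$, already invoked for Lemma \ref{lem2-3}, is what lets us separate the $\frac12\Z$ and $\frac1p\Z$ contributions). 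Consequently, an element $x = N^d + \beta(\huto{v},\huto{b})$ of $(N^\circ)^d$ lies in $(L_{\mathcal{E}})^\circ$ if and only if $\langle x, y\rangle \in \Z$ for every $y = N^d + \beta(\huto{u},\huto{a})$ with $(\overline{\huto{u}},\overline{\huto{a}}) \in \mathcal{E}$ — using that $N^d$ is even and integral so the condition only depends on the cosets — which by the displayed congruence is exactly the condition $(\overline{\huto{v}},\overline{\huto{b}}) \cdot (\overline{\huto{u}},\overline{\huto{a}}) = 0$ in $\Z_2\times\Z_p$ for all $(\overline{\huto{u}},\overline{\huto{a}})\in\mathcal{E}$, that is, $(\overline{\huto{v}},\overline{\huto{b}}) \in \mathcal{E}^\perp$. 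This gives $\mathcal{F} = \mathcal{E}^\perp$ and hence both inclusions at once.

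I expect the only real subtlety — the ``main obstacle'' — to be the bookkeeping in that reduction: one must be careful that the condition $\langle x, L_{\mathcal E}\rangle \subset \Z$ really does descend to a condition on cosets mod $N^d$ (this uses $\langle N^d, (N^\circ)^d\rangle \subset \Z$, i.e. integrality of $N^d$, together with $\langle N^d, N^d\rangle \subset 2\Z$ being irrelevant here), and that checking the pairing against the union $\bigcup_{(\overline{\huto{u}},\overline{\huto{a}})\in\mathcal{E}} L(\overline{\huto{u}},\overline{\huto{a}})$ is the same as checking it against one representative $\beta(\huto{u},\huto{a})$ per class in $\mathcal{E}$ plus all of $N^d$. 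Everything else is a direct application of the congruence \eqref{eq:inner_prod_mod_int} and the $2$-versus-$p$ separation; no deep input beyond Lemma \ref{lem2-3} and the elementary fact that dualization reverses inclusions and fixes $N^d$. As a sanity check one can verify cardinalities: $|(L_{\mathcal{E}})^\circ/N^d| = |(N^\circ)^d/N^d| / |\mathcal{E}| = |k^d\times l^d|/|\mathcal{E}| = |\mathcal{E}^\perp|$, consistent with $(L_{\mathcal E})^\circ = L_{\mathcal{E}^\perp}$.
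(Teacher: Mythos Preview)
Your proposal is correct and follows essentially the same approach as the paper: both arguments compute $\langle \beta(\huto{u},\huto{a}),\beta(\huto{v},\huto{b})\rangle \pmod{\Z}$ via \eqref{eq:inner_prod_mod_int}, observe that the fractional part is $\tfrac{w}{2}+\tfrac{c}{p}$ for integer lifts $w,c$ of the two code pairings, use coprimality of $2$ and $p$ to conclude this is integral iff both code pairings vanish, and note that $\langle (N^\circ)^d, N^d\rangle \subset \Z$ so the condition depends only on cosets. Your preliminary framing (first noting $(L_{\mathcal E})^\circ = L_{\mathcal F}$ for some $\mathcal F$) and the cardinality check are not in the paper but are harmless additions; the only cosmetic point is that your displayed congruence should, like the paper's \eqref{eq:lattice_inner_general}, use integer representatives rather than writing $\tfrac12(\overline{\huto{u}}\cdot\overline{\huto{v}})$ for an element of $\Z_2$.
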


\begin{proof}
Let $\huto{u}=(u_1,\ldots, u_d)$, $\huto{v}=(v_1,\ldots,v_d)\in (\Z^{p-1})^d$ 
and $\huto{a}=(a_1,\ldots,a_d)$, $\huto{b}=(b_1,\ldots,b_d)\in \Z^d$. 
Then
\begin{equation*}
\langle \beta_{u_i,a_i},\beta_{v_i,b_i}\rangle 
\equiv \langle \beta_{u_i,0},\beta_{v_i,0}\rangle - \frac{2}{p}a_i b_i \pmod{\Z}
\end{equation*}
by \eqref{eq:inner_prod_mod_int}. 
Take $w,c\in \Z$ satisfying $w+2\Z=\overline{\huto{u}}\cdot \overline{\huto{v}}$ and 
$c+p\Z=\overline{\huto{a}}\cdot \overline{\huto{b}}$. 
Then we have 
\begin{equation}\label{eq:lattice_inner_general}
\langle \beta(\huto{u},\huto{a}),\beta(\huto{v},\huto{b})\rangle
=\sum_{i=1}^{d}\langle \beta_{u_i,a_i},\beta_{v_i,b_i}\rangle \equiv \frac{w}{2}+\frac{c}{p} 
\pmod{\Z}. 
\end{equation}  
Thus $\langle \beta(\huto{u},\huto{a}),\beta(\huto{v},\huto{b})\rangle\in \Z$ if and only if 
$w\in 2\Z$ and $c\in p\Z$, which is equivalent to 
$\overline{\huto{u}}\cdot\overline{\huto{v}}=\overline{0}$ and 
$\overline{\huto{a}}\cdot\overline{\huto{b}}=\overline{0}$. 

Since $\langle \beta(\huto{u},\huto{a}),N^d\rangle\subset\Z$, we have 
$\langle L(\overline{\huto{u}},\overline{\huto{a}}),L(\overline{\huto{v}},\overline{\huto{b}})\rangle 
\subset  \langle \beta(\huto{u},\huto{a}),\beta(\huto{v},\huto{b})\rangle+\Z$. 
Therefore, 
$L{(\overline{\huto{u}},\overline{\huto{a}})} \subset (L_{\mathcal{E}})^\circ$ if and only if 
$(\overline{\huto{u}},\overline{\huto{a}})\cdot (\overline{\huto{v}},\overline{\huto{b}})=0$ for any 
$(\overline{\huto{v}},\overline{\huto{b}})\in\mathcal{E}$, that is,  
$(\overline{\huto{u}},\overline{\huto{a}})\in\mathcal{E}^\perp$. 
Consequently, we see that 
$L{(\overline{\huto{u}},\overline{\huto{a}})}\subset (L_{\mathcal{E}})^\circ$ if and only if 
$L{(\overline{\huto{u}},\overline{\huto{a}})}\subset L_{\mathcal{E}^\perp }$. 
This completes the proof. 
\end{proof}

The following corollary is clear from Proposition \ref{dual002}. 

\begin{corollary}\label{corollary004}
Let $\mathcal{E}$ be a $\Z$-submodule of $k^d\times l^d$. 

$(1)$ $L_{\mathcal{E}}$ is integral if and only if $\mathcal{E}$ is self-orthogonal. 

$(2)$ $L_{\mathcal{E}}$ is unimodular if and only if $\mathcal{E}$ is self-dual.  
\end{corollary}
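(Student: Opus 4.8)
The plan is to derive Corollary~\ref{corollary004} directly from Proposition~\ref{dual002}, using the standard characterizations of integral and unimodular lattices in terms of the dual lattice. Recall that a positive definite lattice $K$ is integral precisely when $K \subseteq K^\circ$, and it is unimodular precisely when $K = K^\circ$. Since $\mathcal{E}$ is self-orthogonal by definition means $\mathcal{E} \subseteq \mathcal{E}^\perp$, and self-dual means $\mathcal{E} = \mathcal{E}^\perp$, the two parts of the corollary reduce to translating containment and equality of lattices into containment and equality of the corresponding codes.

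For part $(1)$: by Proposition~\ref{dual002} we have $(L_{\mathcal{E}})^\circ = L_{\mathcal{E}^\perp}$, so $L_{\mathcal{E}}$ is integral if and only if $L_{\mathcal{E}} \subseteq L_{\mathcal{E}^\perp}$. Now I would invoke the order-preserving bijection between $\Z$-submodules $\mathcal{E} \subseteq k^d \times l^d$ and sublattices of $(N^\circ)^d$ containing $N^d$, under which $L_{\mathcal{E}}/N^d \cong \mathcal{E}$; this bijection respects inclusion, so $L_{\mathcal{E}} \subseteq L_{\mathcal{E}^\perp}$ is equivalent to $\mathcal{E} \subseteq \mathcal{E}^\perp$, i.e.\ to $\mathcal{E}$ being self-orthogonal. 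For part $(2)$: likewise $L_{\mathcal{E}}$ is unimodular if and only if $L_{\mathcal{E}} = L_{\mathcal{E}^\perp}$, which by the same bijection is equivalent to $\mathcal{E} = \mathcal{E}^\perp$, i.e.\ to $\mathcal{E}$ being self-dual.

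There is essentially no obstacle here; the substance of the corollary is already contained in Proposition~\ref{dual002}, and what remains is only the observation that the assignment $\mathcal{E} \mapsto L_{\mathcal{E}}$ is an inclusion-preserving bijection (with inclusion-preserving inverse) between subcodes and intermediate lattices. The one point worth spelling out carefully is that "integral" should be read as "integral and even" in the context of this paper, but $L_{\mathcal{E}}$ is automatically even since it lies in $(N^\circ)^d$ and, for $x \in L(\overline{\huto u},\overline{\huto a})$, one has $\langle x,x\rangle \equiv w(\overline{\huto u},\overline{\huto a}) \pmod{2\Z}$; since evenness of $L_{\mathcal{E}}$ is governed by the weight function rather than by self-orthogonality, I would note that Corollary~\ref{corollary004}$(1)$ concerns integrality of the bilinear form only, matching the statement of Proposition~\ref{dual002}. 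Thus the proof is a two-line deduction once the bijection is recalled, and I would simply write: "This is immediate from Proposition~\ref{dual002} together with the inclusion-preserving correspondence $\mathcal{E} \leftrightarrow L_{\mathcal{E}}$."
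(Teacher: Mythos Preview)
Your proof is correct and matches the paper's approach exactly: the paper simply states that the corollary is clear from Proposition~\ref{dual002}, and you have spelled out precisely why. One small caution: your aside that ``$L_{\mathcal{E}}$ is automatically even since it lies in $(N^\circ)^d$'' is false (evenness is treated separately in Proposition~\ref{prop002}), but you correctly conclude that the corollary concerns integrality only, so this does not affect the argument.
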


For $\huto{u}=(u_1,\ldots, u_d) \in (\Z^{p-1})^d$ and $\huto{a}=(a_1,\ldots,a_d) \in \Z^d$, 
we have
\begin{equation}\label{eq:w_d_equiv_mod_2}
w(\overline{\huto{u}}, \overline{\huto{a}}) 
\equiv \langle x, x \rangle \pmod{2\Z} \quad 
\text{for } x \in L(\overline{\huto{u}},\overline{\huto{a}})
\end{equation}
by \eqref{eq:w_equiv_mod_2}. We also have
\begin{equation}\label{eq:inner_d_prod_mod_2}
\langle \beta(\huto{u},\huto{a}),\beta(\huto{u},\huto{a}) \rangle
\equiv \sum_{i=1}^d \langle \beta_{u_i,0},\beta_{u_i,0} \rangle 
- \frac{2}{p} \sum_{i=1}^d a_i^2 \pmod{2\Z}
\end{equation}
by \eqref{eq:inner_prod_mod_2}. 
Since $\langle \beta_{u_i,0},\beta_{u_i,0} \rangle \in \Z$ and $p \ge 3$ is an odd integer, 
\eqref{eq:inner_d_prod_mod_2} implies that 
$\langle \beta(\huto{u},\huto{a}),\beta(\huto{u},\huto{a}) \rangle \in 2\Z$ if and only if 
$\sum_{i=1}^d \langle \beta_{u_i,0},\beta_{u_i,0} \rangle \in 2\Z$ and 
$\sum_{i=1}^d a_i^2 \in p\Z$.
Note that 
\begin{equation*}
\langle \beta(\huto{u},\huto{0}),\beta(\huto{u},\huto{0}) \rangle 
= \sum_{i=1}^d \langle \beta_{u_i,0},\beta_{u_i,0} \rangle, \quad
\langle \beta(\huto{0},\huto{a}),\beta(\huto{0},\huto{a}) \rangle 
= -\frac{2}{p} \sum_{i=1}^d a_i^2.
\end{equation*}
Thus $\langle \beta(\huto{u},\huto{a}),\beta(\huto{u},\huto{a}) \rangle \in 2\Z$ if and only if 
$\langle \beta(\huto{u},\huto{0}),\beta(\huto{u},\huto{0}) \rangle \in 2\Z$ and 
$\langle \beta(\huto{0},\huto{a}),\beta(\huto{0},\huto{a}) \rangle \in 2\Z$. 
Note that 
$\langle \beta(\huto{u},\huto{0}),\beta(\huto{u},\huto{0}) \rangle \in 2\Z$ 
if and only if $q(\overline{\huto{u}}) = 0$
and that   
$\langle \beta(\huto{0},\huto{a}),\beta(\huto{0},\huto{a}) \rangle \in 2\Z$ 
if and only if $\overline{\huto{a}} \cdot \overline{\huto{a}} = 0$. 

A $\Z$-submodule $\mathcal{E} \subset k^d \times l^d$ 
(resp. $\mathcal{C} \subset k^d$, $\mathcal{D} \subset l^d$)  
is said to be {\em even} 
if $w(\overline{\huto{u}},\overline{\huto{a}})\in 2\Z$ for any 
$(\overline{\huto{u}}, \overline{\huto{a}}) \in\mathcal{E}$ 
(resp. 
$w(\overline{\huto{u}},\overline{\huto{0}})\in 2\Z$ for any $\overline{\huto{u}} \in \mathcal{C}$, 
$w(\overline{\huto{0}},\overline{\huto{a}})\in 2\Z$ for any $\overline{\huto{a}}\in\mathcal{D}$). 
Note that $\mathcal{E}$ (resp. $\mathcal{C}$, $\mathcal{D}$) is even if and only if 
the corresponding lattice $L_{\mathcal{E}}$ (resp. $L_{\mathcal{C} \times \{0\}}$, 
$L_{\{0\} \times \mathcal{D}}$) 
is even by \eqref{eq:w_d_equiv_mod_2}. 

By the above arguments, we have the following proposition.

\begin{proposition}\label{prop002}
Let $\mathcal{C}$ and $\mathcal{D}$ be  $\Z$-submodules of $k^d$ and $l^d$, respectively. 

$(1)$ $L_{\mathcal{C}\times\mathcal{D}}$ is even if and only if 
both $\mathcal{C}$ and $\mathcal{D}$ are even. 

$(2)$ $\mathcal{C}$ is even if and only if $\mathcal{C}$ is totally isotropic with respect to the 
quadratic form $q$.   

$(3)$ $\mathcal{D}$ is even if and only if $D$ is self-orthogonal.
\end{proposition}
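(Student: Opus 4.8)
The plan is to reduce each of the three statements to the characterizations of evenness that were worked out just before the proposition. For part (1), I would use the fact that, by Lemma~\ref{lem2-3}, any element of $\mathcal{C}\times\mathcal{D}$ is of the form $(\overline{\huto{u}},\overline{\huto{a}})$ with $\overline{\huto{u}}\in\mathcal{C}$ and $\overline{\huto{a}}\in\mathcal{D}$, together with the displayed equivalence derived from \eqref{eq:inner_d_prod_mod_2}: for $x\in L(\overline{\huto{u}},\overline{\huto{a}})$ one has $\langle x,x\rangle\in 2\Z$ if and only if $q(\overline{\huto{u}})=0$ and $\overline{\huto{a}}\cdot\overline{\huto{a}}=0$ (this is exactly the splitting of the mod-$2\Z$ congruence into its ``$k$-part'' and ``$l$-part'', which works because $p$ is odd). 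Combined with \eqref{eq:w_d_equiv_mod_2}, $w(\overline{\huto{u}},\overline{\huto{a}})\in 2\Z$ is equivalent to $q(\overline{\huto{u}})=0$ and $\overline{\huto{a}}\cdot\overline{\huto{a}}=0$. If both $\mathcal{C}$ and $\mathcal{D}$ are even, each of these holds for every $\overline{\huto{u}}\in\mathcal{C}$ and $\overline{\huto{a}}\in\mathcal{D}$, so $L_{\mathcal{C}\times\mathcal{D}}$ is even. Conversely, taking $\overline{\huto{a}}=\overline{\huto{0}}$ shows $q(\overline{\huto{u}})=0$ for all $\overline{\huto{u}}\in\mathcal{C}$, i.e. $\mathcal{C}$ is even, and taking $\overline{\huto{u}}=\overline{\huto{0}}$ gives $\mathcal{D}$ even.

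For part (2), I would simply unwind definitions: $\mathcal{C}$ even means $w(\overline{\huto{u}},\overline{\huto{0}})\in 2\Z$ for all $\overline{\huto{u}}\in\mathcal{C}$, which by \eqref{eq:q_on_kd} is precisely $q(\overline{\huto{u}})=0$ for all $\overline{\huto{u}}\in\mathcal{C}$ — that is, $\mathcal{C}$ is totally isotropic for the quadratic form $q$. One should note the associated bilinear form of $q$ is $\cdot$ by \eqref{eq:q_on_kd_inner}, so ``totally isotropic with respect to $q$'' is the natural notion; nothing more is needed here.

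For part (3), I would again unwind: $\mathcal{D}$ even means $w(\overline{\huto{0}},\overline{\huto{a}})\in 2\Z$ for all $\overline{\huto{a}}\in\mathcal{D}$, and from $\langle\beta(\huto{0},\huto{a}),\beta(\huto{0},\huto{a})\rangle=-\frac{2}{p}\sum_i a_i^2$ this is $\sum_i a_i^2\in p\Z$. Now $\overline{\huto{a}}\cdot\overline{\huto{a}}=\sum_i(-2a_i^2)+p\Z=-2\sum_i a_i^2+p\Z$, and since $2$ is invertible mod $p$, $\overline{\huto{a}}\cdot\overline{\huto{a}}=0$ in $\Z_p$ if and only if $\sum_i a_i^2\in p\Z$. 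Hence $\mathcal{D}$ even is equivalent to $\overline{\huto{a}}\cdot\overline{\huto{a}}=0$ for all $\overline{\huto{a}}\in\mathcal{D}$. The final step is the identity, valid over the field $\Z_p$, that a submodule $\mathcal{D}$ is self-orthogonal (i.e. $\mathcal{D}\subset\mathcal{D}^\perp$) if and only if the diagonal $\overline{\huto{a}}\cdot\overline{\huto{a}}=0$ vanishes on $\mathcal{D}$: one direction is immediate, and for the other one polarizes, $\overline{\huto{a}}\cdot\overline{\huto{b}}=\frac{1}{2}\big((\overline{\huto{a}}+\overline{\huto{b}})\cdot(\overline{\huto{a}}+\overline{\huto{b}})-\overline{\huto{a}}\cdot\overline{\huto{a}}-\overline{\huto{b}}\cdot\overline{\huto{b}}\big)$, using again that $2$ is a unit mod $p$.

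I do not anticipate a serious obstacle: the whole proposition is a bookkeeping consequence of the congruences \eqref{eq:w_d_equiv_mod_2}, \eqref{eq:inner_d_prod_mod_2} and the definitions \eqref{eq:q_on_kd}, \eqref{eq:q_on_kd_inner}, with the only genuine input being that $p$ is odd (so $2$ is invertible mod $p$, allowing both the clean splitting of the mod-$2\Z$ congruence into $k$- and $l$-parts in~(1) and the polarization argument in~(3)). The mildest care needed is in~(3) to phrase self-orthogonality over $\Z_p$ correctly and to invoke the polarization identity; everything else is direct substitution.
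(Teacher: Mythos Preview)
Your proposal is correct and matches the paper's approach: the paper gives no separate proof but simply writes ``By the above arguments, we have the following proposition,'' referring to the congruences \eqref{eq:w_d_equiv_mod_2}, \eqref{eq:inner_d_prod_mod_2} and the two observations immediately preceding the statement, and your write-up is precisely an explicit unpacking of those arguments. The only step you spell out that the paper leaves implicit is the polarization identity in part~(3) to pass from vanishing of $\overline{\huto{a}}\cdot\overline{\huto{a}}$ on $\mathcal{D}$ to self-orthogonality of $\mathcal{D}$; this is standard and exactly what is needed.
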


The following proposition also holds. 

\begin{proposition}\label{prop233}
Let $\mathcal{C}$ and $\mathcal{D}$ be self-orthogonal $\Z$-submodules of $k^d$ and $l^d$, 
respectively. 
Then for any $\overline{\huto{u}},\overline{\huto{v}}\in\mathcal{C}$ and 
$\overline{\huto{a}},\overline{\huto{b}}\in \mathcal{D}$, 
\begin{equation*}
w(\overline{\huto{u}}+\overline{\huto{v}},\overline{\huto{a}}+\overline{\huto{b}}) \equiv 
w(\overline{\huto{u}},\overline{\huto{a}})
+ w(\overline{\huto{v}},\overline{\huto{b}}) \pmod{2\Z}. 
\end{equation*}
\end{proposition}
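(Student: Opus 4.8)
The plan is to realize each weight value as the square norm of a lattice vector via \eqref{eq:w_d_equiv_mod_2} and expand the norm of a sum. First I would fix integral representatives $\huto{u},\huto{v}\in(\Z^{p-1})^d$ and $\huto{a},\huto{b}\in\Z^d$ of $\overline{\huto{u}},\overline{\huto{v}},\overline{\huto{a}},\overline{\huto{b}}$, and choose vectors $x\in L(\overline{\huto{u}},\overline{\huto{a}})$ and $y\in L(\overline{\huto{v}},\overline{\huto{b}})$. By the linearity of $\beta(\,\cdot\,,\,\cdot\,)$ in $\huto{u}$ and $\huto{a}$ coming from \eqref{eq:beta_u_a-1}, we have $x+y\in L(\overline{\huto{u}}+\overline{\huto{v}},\overline{\huto{a}}+\overline{\huto{b}})$, so \eqref{eq:w_d_equiv_mod_2} gives
\[
w(\overline{\huto{u}}+\overline{\huto{v}},\overline{\huto{a}}+\overline{\huto{b}})
\equiv \langle x+y,x+y\rangle
= \langle x,x\rangle + 2\langle x,y\rangle + \langle y,y\rangle \pmod{2\Z},
\]
while \eqref{eq:w_d_equiv_mod_2} applied to $x$ and to $y$ identifies the outer two terms with $w(\overline{\huto{u}},\overline{\huto{a}})$ and $w(\overline{\huto{v}},\overline{\huto{b}})$ modulo $2\Z$. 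Thus the proposition reduces to proving that the cross term satisfies $2\langle x,y\rangle\in 2\Z$, i.e. $\langle x,y\rangle\in\Z$.

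For the cross term I would reuse the computation already made in the proof of Proposition \ref{dual002}. Since $\langle\beta(\huto{u},\huto{a}),N^d\rangle\subset\Z$ and $\langle\beta(\huto{v},\huto{b}),N^d\rangle\subset\Z$, one has $\langle x,y\rangle\equiv\langle\beta(\huto{u},\huto{a}),\beta(\huto{v},\huto{b})\rangle\pmod{\Z}$, and \eqref{eq:lattice_inner_general} shows this is $\equiv \frac{w_0}{2}+\frac{c_0}{p}\pmod{\Z}$, where $w_0,c_0\in\Z$ are chosen with $w_0+2\Z=\overline{\huto{u}}\cdot\overline{\huto{v}}$ and $c_0+p\Z=\overline{\huto{a}}\cdot\overline{\huto{b}}$.

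The last step invokes self-orthogonality: as $\overline{\huto{u}},\overline{\huto{v}}\in\mathcal{C}\subset\mathcal{C}^\perp$ we get $\overline{\huto{u}}\cdot\overline{\huto{v}}=\overline{0}$, hence $w_0\in2\Z$; similarly $\overline{\huto{a}},\overline{\huto{b}}\in\mathcal{D}\subset\mathcal{D}^\perp$ forces $\overline{\huto{a}}\cdot\overline{\huto{b}}=\overline{0}$, hence $c_0\in p\Z$. Therefore $\langle x,y\rangle\in\Z$, which completes the argument. There is no real obstacle here; the only point that needs attention is that the cross term $2\langle x,y\rangle$ must lie in $2\Z$ and not merely in $\Z$, and it is precisely this that requires \emph{both} self-orthogonality hypotheses (and explains why the additivity of $w$ modulo $2\Z$ can fail without them).
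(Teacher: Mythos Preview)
Your proof is correct and follows essentially the same idea as the paper's: both expand the square norm of a sum using \eqref{eq:w_d_equiv_mod_2} and eliminate the cross terms by showing they lie in $2\Z$ via self-orthogonality. The only cosmetic difference is packaging: the paper first notes (via Propositions~\ref{dual001} and~\ref{dual002}) that $L_{\mathcal{C}\times\mathcal{D}}$ is integral and then splits $\beta(\huto{u}+\huto{v},\huto{a}+\huto{b})$ into four summands before recombining, whereas you keep two summands $x,y$ and re-derive the integrality of $\langle x,y\rangle$ directly from \eqref{eq:lattice_inner_general}.
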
 

\begin{proof}
By Propositions \ref{dual001}, \ref{dual002} and our assumption, the lattice 
$L_{\mathcal{C}\times\mathcal{D}}$ is integral. 
Then we see that 
\begin{align*}
w(\overline{\huto{u}}+\overline{\huto{v}},\overline{\huto{a}}+\overline{\huto{b}})
&\equiv \langle\beta(\huto{u}+\huto{v},\huto{a}+\huto{b}),
\beta(\huto{u}+\huto{v},\huto{a}+\huto{b})\rangle\\ 
&\equiv \langle\beta(\huto{u},\huto{0}),\beta(\huto{u},\huto{0})\rangle
+ \langle\beta(\huto{v},\huto{0}),\beta(\huto{v},\huto{0})\rangle\\
&\quad + \langle\beta(\huto{0},\huto{a}),\beta(\huto{0},\huto{a})\rangle
+ \langle\beta(\huto{0},\huto{b}),\beta(\huto{0},\huto{b})\rangle\\
&\equiv \langle\beta(\huto{u},\huto{a}),\beta(\huto{u},\huto{a})\rangle
+ \langle\beta(\huto{v},\huto{b}),\beta(\huto{v},\huto{b})\rangle\\
&\equiv w(\overline{\huto{u}},\overline{\huto{a}})
+w(\overline{\huto{v}},\overline{\huto{b}}) \pmod{2\Z} 
\end{align*}
for $\huto{u},\huto{v}\in(\Z^{p-1})^d$, $\huto{a},\huto{b}\in\Z^d$ 
with $\overline{\huto{u}},\overline{\huto{v}}\in\mathcal{C}$ 
and $\overline{\huto{a}},\overline{\huto{b}}\in \mathcal{D}$. 
\end{proof}

\subsection{The isometry $\sigma$ of $N$}\label{Sect4.2}

Let $p \ge 3$ be an odd integer. 
For $i=1,\ldots, p-1$, let $r_i$ be the isometry of $\R^p$ induced by the transposition of 
$i$-th and $(i+1)$-th entries of every vector in $\R^p$. 
Recall that $\beta_i = \varepsilon_i - \varepsilon_{i+1}$ with $\varepsilon_i$ given in \eqref{unit001}. 
Thus $r_i$ is a reflection 
$r_i(x) = x - \frac{2\langle x, \beta_i \rangle}{\langle \beta_i, \beta_i \rangle}\beta_i$ 
with respect to $\beta_i$ and $r_1,\ldots, r_{p-1}$ generate a symmetric group $\mfS_{p}$ 
of degree $p$. 
Since each $r_i$ preserves $N$, we have a group homomorphism $\mfS_p \rightarrow O(N)$ 
via the restriction.
In fact, the group homomorphism induced by the restriction is injective, for 
\[
\R^p=\langle N\rangle_{\R}\oplus \langle(1,\ldots,1)\rangle_{\R}. 
\] 
Hence $\mfS_p$ is a subgroup of $O(N)$. 

The $-1$-isometry $\theta$ on $\R^p$ also gives an isometry of $N$ and 
$O(N) = \mfS_p \times \langle\theta\rangle$.
Since $O(N^\circ)=O(N)$, $O(N)$ acts naturally on $k\times l\cong N^\circ/N$.  

By \eqref{gamma001} and \eqref{inner002}, we have
\begin{equation*}
r_i(\gamma) =
\begin{cases}
\gamma & \text{if } 1 \le i \le p- 2,\\
\gamma - \beta_{p-1} & \text{if } i = p- 1.
\end{cases}
\end{equation*}
Hence $\mfS_p$ acts on $l$ trivially. 
On the other hand, $\theta$ acts on $l$ as a multiplication by $-1$ and acts on $k$ trivially. 

We consider an isometry  
\[
\sigma=r_1\cdots r_{p-1}=(1\,2\,\cdots\,p)\in \mfS_p
\]
of $\R^p$ of order $p$. 
The restriction of $\sigma$ to $N$ gives a fixed-point-free isometry of $N$. 
In fact, $\sigma$ corresponds to a Coxeter element of the Weyl group of type $A_{p-1}$. 
We have $\sigma(\beta_i)=\beta_{i+1}$ for $i=0,1,\ldots, p-1$ and 
$\sigma(\gamma)=\gamma+\beta_0$.
We extend $\sigma$ to isometries of $N^d$ and $(N^\circ)^d$ by diagonal action 
on each direct summand. 
Then $\sigma$ induces an automorphism, which is denoted by the same symbol $\sigma$, 
of $(N^\circ)^d/N^d = k^d\times l^d$ 
preserving inner products and weights. 
We also note that the action of $\sigma$ on $k^d$ is fixed-point-free and 
the action on $l^d$ is the identity.

If $\mathcal{E}$ is a $\sigma$-invariant $\Z$-submodule of $k^d\times l^d$, 
then $\sigma(L_{\mathcal{E}})=L_{\mathcal{E}}$.
Hence the following lemma holds.  
   
\begin{lemma}\label{lem3-6}  
Let $d\in\Zplus$ and $\mathcal{E}$ a $\Z$-submodule of $k^d\times l^d$.
If $\mathcal{E}$ is $\sigma$-invariant, then $\sigma$ gives an isometry of 
$L_{\mathcal{E}}$ of order $p$. 
\end{lemma}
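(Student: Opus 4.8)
The plan is to reduce the statement to two facts established just above it: that $\sigma$ acts on $(N^\circ)^d$ as an isometry of order $p$ which is fixed-point-free on $N^d$, and that the induced action on $(N^\circ)^d/N^d = k^d\times l^d$ is precisely the automorphism $\sigma$ of $k^d\times l^d$ from Section~\ref{Sect4.2}.

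First I would observe that, by its very construction, $L_{\mathcal{E}}$ is the preimage in $(N^\circ)^d$ of $\mathcal{E}$ under the quotient map $(N^\circ)^d\to (N^\circ)^d/N^d\cong k^d\times l^d$; equivalently, $L_{\mathcal{E}}/N^d$ corresponds to $\mathcal{E}$, as already noted. Since $\sigma$ preserves $N^d$, it descends to the quotient, and there it acts as the given automorphism $\sigma$ of $k^d\times l^d$. Hence $\sigma(L_{\mathcal{E}})$ is the preimage of $\sigma(\mathcal{E})$, and the hypothesis that $\mathcal{E}$ is $\sigma$-invariant yields $\sigma(L_{\mathcal{E}})=L_{\mathcal{E}}$.

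Next, because $\sigma$ was defined as the diagonal extension to $(N^\circ)^d$ of the Coxeter element $\sigma\in\mfS_p\subset O(N)=O(N^\circ)$, it preserves the bilinear form of $(N^\circ)^d$; restricting to the invariant sublattice $L_{\mathcal{E}}$ then shows that $\sigma|_{L_{\mathcal{E}}}$ is an isometry of $L_{\mathcal{E}}$. For the order, note that $\sigma^p=\id$ on all of $(N^\circ)^d$, so the order of $\sigma|_{L_{\mathcal{E}}}$ divides $p$; conversely $N^d\subseteq L_{\mathcal{E}}$ and $\sigma$ is fixed-point-free of order exactly $p$ on $N^d$, so any power of $\sigma$ that acts trivially on $L_{\mathcal{E}}$ acts trivially on $N^d$ and is therefore a multiple of $p$. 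Thus $\sigma|_{L_{\mathcal{E}}}$ has order exactly $p$. There is no real obstacle here: the one point deserving an explicit line is the identification of $\sigma(L_{\mathcal{E}})$ with the lattice attached to $\sigma(\mathcal{E})$, which is immediate from the compatibility of the $\sigma$-action on $(N^\circ)^d$ with the $\sigma$-action on $k^d\times l^d$.
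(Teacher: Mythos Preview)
Your argument is correct and is essentially the same as the paper's: the paper simply observes that $\sigma(L_{\mathcal{E}})=L_{\mathcal{E}}$ whenever $\mathcal{E}$ is $\sigma$-invariant and then states the lemma without further proof, leaving the isometry and order-$p$ claims as immediate from the construction. You have just made explicit the details (preimage description of $L_{\mathcal{E}}$, restriction of the isometry, and the order argument via $N^d\subseteq L_{\mathcal{E}}$) that the paper leaves to the reader.
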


Since $\mfS_p$ acts on $l$ trivially, a $\Z$-submodule $\mathcal{C}\times \mathcal{D}$ of 
$k^d\times l^d$ is $\sigma$-invariant if and only if  $\mathcal{C}$ is $\sigma$-invariant.    
In particular, we have a chain of $\sigma$-invariant lattices   
\[
L_{\mathcal{C}\times \{0\}}\subset L_{\mathcal{C} \times \mathcal{D}}
\subset L_{\mathcal{C}^\perp \times l^d}
\]
when $\mathcal{C}$ is $\sigma$-invariant and self-orthogonal. 
Furthermore, if $\mathcal{C}$ is self-dual, then 
$(L_{\mathcal{C}\times\{0\}})^\circ=L_{\mathcal{C}\times l^d}$ and 
$L_{\mathcal{C}\times\{0\}}+\sigma(\lambda)=L_{\mathcal{C}\times\{0\}}+\lambda$ 
for $\lambda\in (L_{\mathcal{C}\times\{0\}})^\circ$.  
 
%

\section{A construction of VOAs having group-like fusions}\label{Sect5}

In this section, we calculate the quantum dimensions of all irreducible $\widehat{\sigma}^s$-twisted 
modules of $V_{L_{\mathcal{C}\times \mathcal{D}}}$, $1\leq s\leq p-1$, 
where $p$ is an odd prime, 
$\mathcal{C}$ is a $\sigma$-invariant even $\Z$-submodule of $k^d$ and 
$\mathcal{D}$ is an even $\Z$-submodule of $l^d$. 
We show that the orbifold model 
$V_{L_{\mathcal{C}\times \mathcal{D}}}^{\langle \widehat{\sigma} \rangle}$ 
has a group-like fusion if and only if $\mathcal{C}$ is self-dual. 
For such a $\mathcal{C}$, we also discuss the fusion ring of  
$V_{L_{\mathcal{C}\times \mathcal{D}}}^{\langle \widehat{\sigma} \rangle}$.

\subsection{Quantum dimensions of $\widehat{\sigma}^s$-twisted 
$V_{L_{\mathcal{C}\times\mathcal{D}}}$-modules}\label{Sect5.1}

Let $p$ be an odd prime,  
$d \in \Zplus$, 
and $\sigma = r_1\cdots r_{p-1} \in \mfS_p$ as in Section \ref{Sect4.2}. 
Recall that the isometry $\sigma$ induces an automorphism $\sigma$ 
of $k^d\times l^d$ preserving inner products and weights. The automorphism $\sigma$ is 
fixed-point-free on $k^d$ and trivial on $l^d$. 

For $\huto{u}\in(\Z^{p-1})^d$ and $\huto{a}\in\Z^d$, the isometry $\sigma$ transforms the coset 
$L{(\overline{\huto{u}},\overline{\huto{a}})}=N^{d}+\beta(\huto{u},\huto{a})$ of $N^d$ in 
$(N^\circ)^d$ as
\begin{equation*}
\sigma(L{(\overline{\huto{u}},\overline{\huto{a}})}) = 
L{(\sigma(\overline{\huto{u}}),\overline{\huto{a}})}.
\end{equation*}
Therefore, we have
\begin{equation}\label{eq:1-sigma_act_on_coset}
(1-\sigma^s)(L{(\overline{\huto{u}},\overline{\huto{a}})}) \subset   
L{(\overline{\huto{u}} - \sigma^s(\overline{\huto{u}}),\overline{\huto{0}})}
\end{equation}
for $1 \le s \le p-1$. 

Let $\mathcal{C}$ and $\mathcal{D}$ be even $\Z$-submodules of $k^d$ and $l^d$, 
respectively. 
Then $L_{\mathcal{C}\times \mathcal{D}}$ is a positive definite even lattice of rank $d(p-1)$ 
by Proposition \ref{prop002} and its dual lattice is 
$(L_{\mathcal{C}\times \mathcal{D}})^\circ = L_{\mathcal{C}^\perp\times \mathcal{D}^\perp}$ 
by Propositions \ref{dual001} and \ref{dual002}.  
Note that $\mathcal{C} \subset \mathcal{C}^\perp$ and $\mathcal{D} \subset \mathcal{D}^\perp$.

Assume that $\mathcal{C}$ is $\sigma$-invariant. 
Then the lattice $L_{\mathcal{C}\times \mathcal{D}}$ is $\sigma$-invariant 
and $\sigma$ is fixed-point-free of order $p$ on $L_{\mathcal{C}\times \mathcal{D}}$.  
We fix $1 \le s \le p-1$. 
Since $p$ is a prime, $\sigma^s$ is a fixed-point-free isometry of 
$L_{\mathcal{C}\times \mathcal{D}}$ of order $p$. 
Consider a $\sigma^s$-invariant alternating $\Z$-bilinear map 
$c^{\sigma^s} : 
L_{\mathcal{C}\times \mathcal{D}} \times L_{\mathcal{C}\times \mathcal{D}} \to \Z_{2p}$ 
defined by 
\begin{equation}\label{eq:c-sigma-s}
c^{\sigma^s}(\alpha, \beta) = \sum_{i=1}^{p-1} 2 \langle  i \sigma^{si} (\alpha), \beta \rangle + 2p\Z
\quad \text{for } \alpha, \beta \in L_{\mathcal{C} \times \mathcal{D}}
\end{equation}
(cf. \eqref{eq:c_cs}).   
The radical  
$R_{L_{\mathcal{C}\times \mathcal{D}}}^{\sigma^s}
= \{ \alpha \in L_{\mathcal{C}\times \mathcal{D}} | c^{\sigma^s}(\alpha, \beta) 
= 0 \text{ for } \beta \in L_{\mathcal{C}\times \mathcal{D}}\}$ 
of $c^{\sigma^s}$ is 
\begin{equation}\label{eq:radical_of_c_sigma}
R_{L_{\mathcal{C}\times \mathcal{D}}}^{\sigma^s} 
= \big( (1-\sigma^s)((L_{\mathcal{C}\times \mathcal{D}})^\circ)\big) 
\cap L_{\mathcal{C}\times \mathcal{D}}
\end{equation}
by Lemma \ref{lem:radical_general}.

\begin{lemma}\label{lem:radical_data}  
$(1)$ $R_{L_{\mathcal{C}\times \mathcal{D}}}^{\sigma^s} 
= (1 - \sigma^s) L_{\mathcal{C}\times \mathcal{D}^\perp}$.

$(2)$ $R_{L_{\mathcal{C}\times \mathcal{D}}}^{\sigma^s}
/ (1-\sigma^s)L_{\mathcal{C}\times \mathcal{D}} \cong \mathcal{D}^\perp/\mathcal{D}$ 
as $\Z$-modules. 

$(3)$ $(1-\sigma^s)((L_{\mathcal{C}\times \mathcal{D}})^\circ)
/R_{L_{\mathcal{C}\times \mathcal{D}}}^{\sigma^s} \cong \mathcal{C}^\perp/\mathcal{C}$ 
as $\Z$-modules.
\end{lemma}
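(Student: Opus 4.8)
The plan is to prove $(1)$ by a double inclusion and then obtain $(2)$ and $(3)$ as formal consequences. Throughout I will use two facts. First, $1-\sigma^s$ is injective on $(N^\circ)^d$: since $\sigma$ is fixed-point-free of order $p$ on $N$, it has no nonzero fixed vector in $\Q\otimes_\Z N$, and because $p$ is prime $\langle\sigma^s\rangle=\langle\sigma\rangle$, so $\sigma^s$ also has no nonzero fixed vector in $(\Q\otimes_\Z N)^d\supseteq (N^\circ)^d$. Second, $1-\sigma^s$ is injective on the finite group $k^d$, hence bijective on it (it is fixed-point-free on $k^d$, as noted in Section~\ref{Sect4.2}), and it restricts to a bijection of the $\sigma$-invariant submodule $\mathcal{C}$; consequently $(1-\sigma^s)\overline{\huto{u}}\in\mathcal{C}$ implies $\overline{\huto{u}}\in\mathcal{C}$. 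I will also use the coset decomposition $L_\mathcal{E}=\bigcup_{(\overline{\huto{u}},\overline{\huto{a}})\in\mathcal{E}}L(\overline{\huto{u}},\overline{\huto{a}})$ into pairwise disjoint cosets of $N^d$, the identification $L_\mathcal{E}/N^d\cong\mathcal{E}$, the equality $(L_{\mathcal{C}\times\mathcal{D}})^\circ=L_{\mathcal{C}^\perp\times\mathcal{D}^\perp}$, and \eqref{eq:1-sigma_act_on_coset}.

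For $(1)$, by \eqref{eq:radical_of_c_sigma} and $(L_{\mathcal{C}\times\mathcal{D}})^\circ=L_{\mathcal{C}^\perp\times\mathcal{D}^\perp}$ we have $R_{L_{\mathcal{C}\times\mathcal{D}}}^{\sigma^s}=\big((1-\sigma^s)L_{\mathcal{C}^\perp\times\mathcal{D}^\perp}\big)\cap L_{\mathcal{C}\times\mathcal{D}}$. For the inclusion ``$\supseteq$'': $L_{\mathcal{C}\times\mathcal{D}^\perp}\subseteq L_{\mathcal{C}^\perp\times\mathcal{D}^\perp}$ because $\mathcal{C}\subseteq\mathcal{C}^\perp$, so $(1-\sigma^s)L_{\mathcal{C}\times\mathcal{D}^\perp}\subseteq (1-\sigma^s)L_{\mathcal{C}^\perp\times\mathcal{D}^\perp}$; and for $x$ in a coset $L(\overline{\huto{u}},\overline{\huto{a}})$ with $\overline{\huto{u}}\in\mathcal{C}$, $\overline{\huto{a}}\in\mathcal{D}^\perp$, \eqref{eq:1-sigma_act_on_coset} puts $(1-\sigma^s)x$ into $L\big((1-\sigma^s)\overline{\huto{u}},\overline{\huto{0}}\big)\subseteq L_{\mathcal{C}\times\mathcal{D}}$, since $(1-\sigma^s)\overline{\huto{u}}\in\mathcal{C}$ as $\mathcal{C}$ is $\sigma$-invariant. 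For ``$\subseteq$'': take $z\in L_{\mathcal{C}^\perp\times\mathcal{D}^\perp}$ lying in a coset $L(\overline{\huto{u}},\overline{\huto{a}})$ with $\overline{\huto{u}}\in\mathcal{C}^\perp$, $\overline{\huto{a}}\in\mathcal{D}^\perp$, and suppose $(1-\sigma^s)z\in L_{\mathcal{C}\times\mathcal{D}}$. By \eqref{eq:1-sigma_act_on_coset}, $(1-\sigma^s)z$ lies in the coset $L\big((1-\sigma^s)\overline{\huto{u}},\overline{\huto{0}}\big)$; since it also lies in $L_{\mathcal{C}\times\mathcal{D}}$ and the cosets of $N^d$ are pairwise disjoint, $\big((1-\sigma^s)\overline{\huto{u}},\overline{\huto{0}}\big)\in\mathcal{C}\times\mathcal{D}$, i.e.\ $(1-\sigma^s)\overline{\huto{u}}\in\mathcal{C}$, hence $\overline{\huto{u}}\in\mathcal{C}$ by the second fact. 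Thus $z\in L_{\mathcal{C}\times\mathcal{D}^\perp}$, which proves $(1)$.

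For $(2)$ and $(3)$, apply the injective operator $1-\sigma^s$ to the chain $L_{\mathcal{C}\times\mathcal{D}}\subseteq L_{\mathcal{C}\times\mathcal{D}^\perp}\subseteq L_{\mathcal{C}^\perp\times\mathcal{D}^\perp}$; using the third isomorphism theorem and $L_\mathcal{E}/N^d\cong\mathcal{E}$ this yields
\[
(1-\sigma^s)L_{\mathcal{C}\times\mathcal{D}^\perp}\,/\,(1-\sigma^s)L_{\mathcal{C}\times\mathcal{D}}\;\cong\;L_{\mathcal{C}\times\mathcal{D}^\perp}/L_{\mathcal{C}\times\mathcal{D}}\;\cong\;(\mathcal{C}\times\mathcal{D}^\perp)/(\mathcal{C}\times\mathcal{D})\;\cong\;\mathcal{D}^\perp/\mathcal{D}
\]
and
\[
(1-\sigma^s)L_{\mathcal{C}^\perp\times\mathcal{D}^\perp}\,/\,(1-\sigma^s)L_{\mathcal{C}\times\mathcal{D}^\perp}\;\cong\;L_{\mathcal{C}^\perp\times\mathcal{D}^\perp}/L_{\mathcal{C}\times\mathcal{D}^\perp}\;\cong\;(\mathcal{C}^\perp\times\mathcal{D}^\perp)/(\mathcal{C}\times\mathcal{D}^\perp)\;\cong\;\mathcal{C}^\perp/\mathcal{C}.
\]
Since $R_{L_{\mathcal{C}\times\mathcal{D}}}^{\sigma^s}=(1-\sigma^s)L_{\mathcal{C}\times\mathcal{D}^\perp}$ by $(1)$ and $(1-\sigma^s)((L_{\mathcal{C}\times\mathcal{D}})^\circ)=(1-\sigma^s)L_{\mathcal{C}^\perp\times\mathcal{D}^\perp}$, these are exactly $(2)$ and $(3)$.

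The main obstacle is the ``$\subseteq$'' direction of $(1)$: one must recover, from the single containment $(1-\sigma^s)z\in L_{\mathcal{C}\times\mathcal{D}}$, that $z$ itself lies in $L_{\mathcal{C}\times\mathcal{D}^\perp}$. This succeeds precisely because $1-\sigma^s$ annihilates the $l^d$-component (so no constraint on $\mathcal{D}^\perp$ arises, matching the asymmetry of the statement) while transporting the $k^d$-component faithfully, so the condition on the image pins down the $k^d$-component of $z$ to lie in $\mathcal{C}$; everything else is bookkeeping with the coset decomposition and the identification $L_\mathcal{E}/N^d\cong\mathcal{E}$.
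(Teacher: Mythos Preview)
Your proof is correct and follows essentially the same approach as the paper: both arguments use \eqref{eq:radical_of_c_sigma} together with \eqref{eq:1-sigma_act_on_coset} and the bijectivity of $1-\sigma^s$ on $k^d$ (and on the $\sigma$-invariant submodule $\mathcal{C}$) to establish the double inclusion in $(1)$, and then deduce $(2)$ and $(3)$ from the injectivity of $1-\sigma^s$ on $(N^\circ)^d$ applied to the chain $L_{\mathcal{C}\times\mathcal{D}}\subseteq L_{\mathcal{C}\times\mathcal{D}^\perp}\subseteq L_{\mathcal{C}^\perp\times\mathcal{D}^\perp}$. Your write-up is somewhat more explicit about the coset bookkeeping and the use of the third isomorphism theorem, but the underlying ideas are identical.
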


\begin{proof}  
Since $(L_{\mathcal{C}\times \mathcal{D}})^\circ$ contains 
$L_{\mathcal{C}\times \mathcal{D}^\perp}$, we have 
$R_{L_{\mathcal{C}\times \mathcal{D}}}^{\sigma^s} \supset 
(1 - \sigma^s) L_{\mathcal{C}\times \mathcal{D}^\perp}$ 
by \eqref{eq:1-sigma_act_on_coset} and \eqref{eq:radical_of_c_sigma}.
On the other hand, since $\sigma^s$ is fixed-point-free on $k^d$, the $\Z$-module 
homomorphism $1 - \sigma^s : k^d \to k^d$ is injective, and in fact it is an isomorphism 
of a vector space over $\Z_2$. 
Let $\overline{\huto{u}} \in \mathcal{C}^\perp$. 
Since $\mathcal{C}$ is $\sigma^s$-invariant, we have 
$(1 - \sigma^s) \overline{\huto{u}} \in \mathcal{C}$ if and only if 
$\overline{\huto{u}} \in \mathcal{C}$. 
Thus 
$R_{L_{\mathcal{C}\times \mathcal{D}}}^{\sigma^s} 
\subset (1 - \sigma^s) L_{\mathcal{C}\times \mathcal{D}^\perp}$. 
Hence we obtain (1).

Since $\sigma^s$ is a fixed-point-free isometry of $(N^\circ)^d$, the $\Z$-module homomorphism 
$1 - \sigma^s : (N^\circ)^d \to (1 - \sigma^s) ((N^\circ)^d)$ is an isomorphism. 
Restricting the isomorphism to $L_{\mathcal{C} \times \mathcal{D}^\perp}$ and 
$L_{\mathcal{C} \times \mathcal{D}}$, respectively, we have
\begin{equation*}
 (1 - \sigma^s)L_{\mathcal{C}^ \times \mathcal{D}^\perp} 
/  (1 - \sigma^s)L_{\mathcal{C} \times \mathcal{D}} 
\cong L_{\mathcal{C} \times \mathcal{D}^\perp} / L_{\mathcal{C} \times \mathcal{D}} 
\cong \mathcal{D}^\perp/\mathcal{D}
\end{equation*}
as $\Z$-modules.  
Similarly, 
\begin{equation*}
 (1 - \sigma^s)L_{\mathcal{C}^\perp \times \mathcal{D}^\perp} 
/  (1 - \sigma^s)L_{\mathcal{C} \times \mathcal{D}^\perp} 
\cong L_{\mathcal{C}^\perp \times \mathcal{D}^\perp} 
/ L_{\mathcal{C} \times \mathcal{D}^\perp} 
\cong \mathcal{C}^\perp / \mathcal{C}
\end{equation*}
as $\Z$-modules. Thus (2) and (3) hold. 
\end{proof}

\begin{remark}   
Since $(1-\sigma)(\gamma) = -\beta_0$ and $(1-\sigma)(\beta_i) = \beta_i - \beta_{i+1}$, we have 
$(1-\sigma)L_{\mathcal{C} \times l^d} = L_{\mathcal{C} \times \{0\}} \supset  N^d$.  
Now, $1 - \sigma^s = (1 + \sigma + \cdots + \sigma^{s-1})(1 - \sigma)$ for $2 \le s \le p-1$. 
Since $p$ is an odd prime, $1 + \sigma + \cdots + \sigma^{s-1}$ maps the standard 
simple roots of $A_{p-1}$ to a set of simple roots, and so 
it maps $N^d$ onto $N^d$. 
Hence $(1-\sigma^s)L_{\mathcal{C} \times l^d} = L_{\mathcal{C} \times \{0\}}$.
However, this is not the case if $p$ is not a prime or $\mathcal{D} \ne \{0\}$. 
\end{remark}

Combining Lemma \ref{lem:radical_data} with Corollary \ref{cor973},   
we obtain the following theorem.

\begin{theorem}\label{thm:qdim_irred_twisted_mod}  
Let $p$ be an odd prime, $d \in \Zplus$  
and $\sigma = r_1\cdots r_{p-1} \in \mfS_p$ as in Section \ref{Sect4.2}. 
Let $\mathcal{C}$ and $\mathcal{D}$ be even $\Z$-submodules of $k^d$ and $l^d$, 
respectively. 
Assume that $\mathcal{C}$ is $\sigma$-invariant. 
For $1 \le s \le p-1$, let 
$c^{\sigma^s} : L_{\mathcal{C}\times \mathcal{D}} 
\times L_{\mathcal{C}\times \mathcal{D}} \to \Z_{2p}$  
be a $\sigma^s$-invariant alternating $\Z$-bilinear map defined by \eqref{eq:c-sigma-s}.   
Let $V_{L_{\mathcal{C}\times \mathcal{D}}}^T$ be an irreducible $\widehat{\sigma}^s$-twisted 
$V_{L_{\mathcal{C}\times \mathcal{D}}}$-module associated with an irreducible module $T$ 
satisfying \eqref{cond003} and \eqref{cond004} 
for a central extension $\widehat{L}_{\mathcal{C}\times \mathcal{D}, \sigma^s}$ 
of $L_{\mathcal{C}\times \mathcal{D}}$ by a cyclic group 
$\langle \kappa_{2p} \rangle$ of order $2p$    
determined by $c^{\sigma^s}$. 
Then the quantum dimension 
$\qdim_{V_{L_{\mathcal{C}\times \mathcal{D}}}}V_{L_{\mathcal{C}\times \mathcal{D}}}^T$ 
exists and 
\begin{equation*}
(\qdim_{V_{L_{\mathcal{C}\times \mathcal{D}}}}V_{L_{\mathcal{C}\times \mathcal{D}}}^T)^2
= |\mathcal{C}^\perp / \mathcal{C}|.
\end{equation*}

In particular, 
$\qdim_{V_{L_{\mathcal{C}\times \mathcal{D}}}}V_{L_{\mathcal{C}\times \mathcal{D}}}^T = 1$ 
if and only if $\mathcal{C} = \mathcal{C}^\perp$, that is, $\mathcal{C}$ is self-dual.
\end{theorem}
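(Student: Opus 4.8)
The plan is to reduce the computation of $\qdim_{V_{L_{\mathcal{C}\times\mathcal{D}}}} V_{L_{\mathcal{C}\times\mathcal{D}}}^T$ to Corollary \ref{cor973} and then to extract the quantity $|\mathcal{C}^\perp/\mathcal{C}|$ from the chain of lattices in Lemma \ref{lem:radical_data}. First I would record the basic numerical data for $L = L_{\mathcal{C}\times\mathcal{D}}$: its rank is $\ell = d(p-1)$ by Proposition \ref{prop002}, and since $\sigma^s$ is a fixed-point-free isometry of order $p$ on the rank-$d(p-1)$ lattice $L$ (here $p$ prime is essential, so that $\sigma^s$ has the same order $p$ as $\sigma$), Remark \ref{remark1011} applies and gives $p^{-\ell/(p-1)} = p^{-d}$. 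Thus Corollary \ref{cor973} yields
\begin{equation*}
(\qdim_{V_L} V_L^T)^2 = p^{-d}\,|L^\circ/L|\,|L/R_L^{\sigma^s}|,
\end{equation*}
and it remains to show the right-hand side equals $|\mathcal{C}^\perp/\mathcal{C}|$.

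The second step is to evaluate $|L^\circ/L|$. By Propositions \ref{dual001} and \ref{dual002}, $L^\circ = L_{\mathcal{C}^\perp\times\mathcal{D}^\perp}$, so $|L^\circ/L| = |\mathcal{C}^\perp\times\mathcal{D}^\perp|/|\mathcal{C}\times\mathcal{D}| = |\mathcal{C}^\perp/\mathcal{C}|\cdot|\mathcal{D}^\perp/\mathcal{D}|$. The third and main step is to compute $|L/R_L^{\sigma^s}|$ using the three-term data of Lemma \ref{lem:radical_data}. That lemma tells us $R_L^{\sigma^s} = (1-\sigma^s)L_{\mathcal{C}\times\mathcal{D}^\perp}$, and gives two isomorphisms: $R_L^{\sigma^s}/(1-\sigma^s)L \cong \mathcal{D}^\perp/\mathcal{D}$ and $(1-\sigma^s)(L^\circ)/R_L^{\sigma^s} \cong \mathcal{C}^\perp/\mathcal{C}$. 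I would combine these by first noting that $1-\sigma^s$ is injective on $(N^\circ)^d$ (fixed-point-freeness), so $|L/(1-\sigma^s)L| = |(1-\sigma^s)(L^\circ)/(1-\sigma^s)L|$ after applying the isomorphism $1-\sigma^s$; equivalently, $L$ and $(1-\sigma^s)(L^\circ)$ are both full-rank sublattices of the ambient space and I can chase indices directly. Concretely, from the chain $(1-\sigma^s)L \subset R_L^{\sigma^s} \subset (1-\sigma^s)(L^\circ)$ we get $[(1-\sigma^s)(L^\circ):(1-\sigma^s)L] = |\mathcal{C}^\perp/\mathcal{C}|\cdot|\mathcal{D}^\perp/\mathcal{D}|$. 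Since $1-\sigma^s$ is injective, $[L^\circ:L] = [(1-\sigma^s)(L^\circ):(1-\sigma^s)L] = |\mathcal{C}^\perp/\mathcal{C}|\cdot|\mathcal{D}^\perp/\mathcal{D}|$, which merely reconfirms step two. What I actually need is $[L:R_L^{\sigma^s}]$: since $R_L^{\sigma^s} = (1-\sigma^s)L_{\mathcal{C}\times\mathcal{D}^\perp}$ and $1-\sigma^s$ is injective, $[L^\circ : L_{\mathcal{C}\times\mathcal{D}^\perp}] = [(1-\sigma^s)(L^\circ):R_L^{\sigma^s}] = |\mathcal{C}^\perp/\mathcal{C}|$ by part (3) of the lemma, while $[L_{\mathcal{C}\times\mathcal{D}^\perp}:L] = |\mathcal{D}^\perp/\mathcal{D}|$. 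To get at $[L:R_L^{\sigma^s}]$ itself I use that $[L:(1-\sigma^s)L]$ is a pure $p$-power, in fact $p^d$ (by \cite[Lemma A.1]{GriessLam11} applied to each $A_{p-1}$-summand, or equivalently $L/(1-\sigma^s)L \cong \Z_p^{d}$ as in the Remark after Corollary \ref{cor973}), and then $[L:R_L^{\sigma^s}] = [L:(1-\sigma^s)L]/[R_L^{\sigma^s}:(1-\sigma^s)L] = p^d/|\mathcal{D}^\perp/\mathcal{D}|$ by part (2).

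Assembling: $p^{-d}\,|L^\circ/L|\,|L/R_L^{\sigma^s}| = p^{-d}\cdot|\mathcal{C}^\perp/\mathcal{C}|\,|\mathcal{D}^\perp/\mathcal{D}|\cdot p^d/|\mathcal{D}^\perp/\mathcal{D}| = |\mathcal{C}^\perp/\mathcal{C}|$, which is the claimed formula; the final ``in particular'' clause is then immediate since $\qdim$ is a positive real and $|\mathcal{C}^\perp/\mathcal{C}| = 1$ exactly when $\mathcal{C} = \mathcal{C}^\perp$. The main obstacle I anticipate is the bookkeeping in the third step: correctly identifying that $[L:(1-\sigma^s)L] = p^d$ independently of $\mathcal{D}$ and $s$ (this is where the primality of $p$ and the structure of the $A_{p-1}$-summands enter, via the remark that $1+\sigma+\cdots+\sigma^{s-1}$ permutes a set of simple roots), and threading the injective map $1-\sigma^s$ through the various sublattices without sign or index errors. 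Everything else is a direct application of results already established in the excerpt.
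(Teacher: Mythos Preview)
Your proof is correct and uses essentially the same ingredients as the paper: Corollary \ref{cor973}, Lemma \ref{lem:radical_data}, and the fact that $[L:(1-\sigma^s)L] = p^d$ from \cite{GriessLam11}. The only difference is bookkeeping: the paper applies \cite{GriessLam11} to $L^\circ$ rather than $L$ and computes $|L^\circ/R_L^{\sigma^s}|$ directly via the chain $L^\circ \supset (1-\sigma^s)L^\circ \supset R_L^{\sigma^s}$ (using only part (3) of Lemma \ref{lem:radical_data}), which is slightly cleaner than your split into $|L^\circ/L|\cdot|L/R_L^{\sigma^s}|$ but otherwise identical in spirit.
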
 

\begin{proof}
Since $\sigma^s$ is a fixed-point-free isometry of the rank $d(p-1)$ lattice 
$L_{\mathcal{C}\times \mathcal{D}}$ of order $p$, 
we have
\begin{equation}\label{eq:coker_1-sigma_s}
| (L_{\mathcal{C}\times \mathcal{D}})^\circ 
/ (1 - \sigma^s)((L_{\mathcal{C}\times \mathcal{D}})^\circ) | = p^d
\end{equation}
by \cite[Lemma A.1]{GriessLam11}. Then 
\begin{align*}
|(L_{\mathcal{C}\times \mathcal{D}})^\circ /R_{L_{\mathcal{C}\times \mathcal{D}}}^{\sigma^s}|
&= | (L_{\mathcal{C}\times \mathcal{D}})^\circ / (1 - \sigma^s)((L_{\mathcal{C}\times \mathcal{D}})^\circ) | 
\cdot 
|(1 - \sigma^s)((L_{\mathcal{C}\times \mathcal{D}})^\circ)/R_{L_{\mathcal{C}\times \mathcal{D}}}^{\sigma^s}|\\
&= p^d |\mathcal{C}^\perp/\mathcal{C}|
\end{align*}
by Lemma \ref{lem:radical_data}. 
Therefore, the assertion holds by Corollary \ref{cor973}.
\end{proof}

Note that the dimension of the top level of $V_{L_{\mathcal{C}\times \mathcal{D}}}^T$ is
\begin{equation*}
\dim T = \sqrt{|L_{\mathcal{C} \times \mathcal{D}} 
/ R_{L_{\mathcal{C} \times \mathcal{D}}}^{\sigma^s} |}
= \sqrt{p^d/| \mathcal{D}^\perp/\mathcal{D} |} = |\mathcal{D}|.
\end{equation*}

Next, we study the quantum dimensions of the irreducible modules of the orbifold model 
$V_{L_{\mathcal{C}\times \mathcal{D}}}^{\langle \widehat{\sigma} \rangle}$ 
for a lift $\widehat{\sigma}$ of the isometry $\sigma$.  

\begin{theorem}\label{simplecurrent}  
Let $p$ be an odd prime, $d \in \Zplus$  
and $\sigma = r_1\cdots r_{p-1} \in \mfS_p$ as in Section \ref{Sect4.2}. 
Let $\mathcal{C}$ and $\mathcal{D}$ be even $\Z$-submodules of $k^d$ and $l^d$, 
respectively. 
Assume that $\mathcal{C}$ is $\sigma$-invariant. 
Let $\widehat{\sigma} \in \Aut (V_{L_{\mathcal{C}\times \mathcal{D}}})$ be a lift of $\sigma$ 
of order $p$. 
Then the following conditions are equivalent.

$(1)$ $V_{L_{\mathcal{C}\times \mathcal{D}}}^{\langle \widehat{\sigma} \rangle}$ 
has a group-like fusion.

$(2)$ $\mathcal{C}=\mathcal{C}^\perp$.
\end{theorem}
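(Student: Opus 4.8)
The plan is to prove the two implications separately; the implication $(1)\Rightarrow(2)$ is the short one, which I would argue contrapositively. Assume $\mathcal{C}\subsetneq\mathcal{C}^\perp$ (recall $\mathcal{C}\subseteq\mathcal{C}^\perp$ since $\mathcal{C}$ is even, by Proposition~\ref{prop002}(2)). Both $\mathcal{C}$ and $\mathcal{C}^\perp$ are $\sigma$-invariant, and $1-\sigma$ is invertible on $k^d$ because $\sigma$ is fixed-point-free there; hence $1-\sigma$ is invertible on the nonzero $\Z_2$-space $\mathcal{C}^\perp/\mathcal{C}$, so $\sigma$ acts on it without nonzero fixed vectors, in particular nontrivially, and therefore $\sigma$ acts nontrivially on $L^\circ/L\cong\mathcal{C}^\perp/\mathcal{C}\times\mathcal{D}^\perp/\mathcal{D}$, where $L=L_{\mathcal{C}\times\mathcal{D}}$ and $L^\circ=L_{\mathcal{C}^\perp\times\mathcal{D}^\perp}$ by Propositions~\ref{dual001} and \ref{dual002}. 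Theorem~\ref{thm:rep_lattice_VOA}(2) identifies $\Irr(V_L)$ with $L^\circ/L$, on which $\widehat{\sigma}$ acts through $\sigma$, so picking $\lambda\in L^\circ$ with $\sigma(\lambda+L)\neq\lambda+L$ we get $V_{L+\lambda}\circ\widehat{\sigma}\not\cong V_{L+\lambda}$, while $\qdim_{V_L}V_{L+\lambda}=1$ because $V_L$ has group-like fusion (Theorems~\ref{thm:rep_lattice_VOA}(2) and \ref{qdimthm}(4)). Then Theorem~\ref{qdimthm}(5) shows $V_{L+\lambda}$ is an irreducible $V_L^{\langle\widehat{\sigma}\rangle}$-module with $\qdim_{V_L^{\langle\widehat{\sigma}\rangle}}V_{L+\lambda}=p>1$, so by Theorem~\ref{qdimthm}(4) it is not a simple current, and $V_L^{\langle\widehat{\sigma}\rangle}$ does not have group-like fusion. (The hypotheses of Theorem~\ref{qdimthm} are met: every irreducible $\widehat{\sigma}^i$-twisted $V_L$-module other than $V_L$ has positive lowest conformal weight --- immediate for $i=0$, and for $1\le i\le p-1$ because $\rho(M(1)(\sigma^i))>0$ as $\sigma^i$ is fixed-point-free.)

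For $(2)\Rightarrow(1)$, assume $\mathcal{C}=\mathcal{C}^\perp$ and write $V=V_{L_{\mathcal{C}\times\mathcal{D}}}$, $g=\widehat{\sigma}$. Then $V^{\langle g\rangle}$ is simple, rational, $C_2$-cofinite, self-dual and of CFT type (rationality and $C_2$-cofiniteness by Theorem~\ref{qdimthm}(1), the other properties inherited from $V$), so the notion of group-like fusion applies. By orbifold theory, every irreducible $V^{\langle g\rangle}$-module occurs as a direct summand of some irreducible $g^i$-twisted $V$-module with $0\le i\le p-1$, so by Theorem~\ref{qdimthm}(4) it suffices to show each such summand has quantum dimension $1$ over $V^{\langle g\rangle}$. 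I would establish, for an arbitrary irreducible $g^i$-twisted $V$-module $M$: (A) $\qdim_V M=1$; (B) $M\circ g\cong M$; (C) $M$ is a direct sum of exactly $p$ irreducible $V^{\langle g\rangle}$-modules. Given these, Theorem~\ref{qdimthm}(6) yields quantum dimension $\tfrac{p}{p}\qdim_V M=1$ for every summand, finishing the proof.

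Fact (A) holds because $V=V_L$ has group-like fusion when $i=0$, and because $(\qdim_V M)^2=|\mathcal{C}^\perp/\mathcal{C}|=1$ by Theorem~\ref{thm:qdim_irred_twisted_mod} when $i\neq0$. For (B) with $i=0$, $g$ permutes $\Irr(V)\cong L^\circ/L\cong\mathcal{D}^\perp/\mathcal{D}$ (using $\mathcal{C}=\mathcal{C}^\perp$) through $\sigma$, which acts trivially on $l^d$, hence on $\mathcal{D}^\perp/\mathcal{D}$. For (B) with $i\neq0$, the inequivalent irreducible $g^i$-twisted $V$-modules are in bijection with $R_L^{\sigma^i}/(1-\sigma^i)L$ (Section~\ref{Sect3.3}) compatibly with the lift $\widehat{\sigma}$, which acts on the latter through $\sigma$; the map $1-\sigma^i$ commutes with $\sigma$ and, by Lemmas~\ref{lem:radical_general} and \ref{lem:radical_data}, induces a $\sigma$-equivariant isomorphism $L_{\mathcal{C}\times\mathcal{D}^\perp}/L_{\mathcal{C}\times\mathcal{D}}\to R_L^{\sigma^i}/(1-\sigma^i)L$ whose source is $\mathcal{D}^\perp/\mathcal{D}$ with trivial $\sigma$-action, so $g$ fixes every irreducible $g^i$-twisted $V$-module. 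Finally, for (C): by (B) and the primality of $p$ we get $M\circ g^j\cong M$ for all $j$, so $M$ is $\langle g\rangle$-stable in the strongest sense, and since $\langle g\rangle$ is cyclic there is no cohomological obstruction, so $M$ decomposes into $|\langle g\rangle|=p$ irreducible $V^{\langle g\rangle}$-modules. (Alternatively, the number $s$ of summands divides $p$ and satisfies $s>1$ since $g$ does not act as a scalar on $M$ --- on the factor $M(1)(\sigma^i)$ it multiplies homogeneous generators by the various eigenvalues of $\sigma$ on $\mathfrak{h}$, which are not all equal --- hence $s=p$.)

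The step I expect to demand the most care is (B) for $i\neq0$: one has to follow how $\widehat{\sigma}$ permutes the irreducible $\widehat{\sigma}^i$-twisted modules through the central extension $\widehat{L}_{\sigma^i}$ and the characters of its center, and verify this permutation is governed by the $\sigma$-action on $R_L^{\sigma^i}/(1-\sigma^i)L$; Lemmas~\ref{lem:radical_general} and \ref{lem:radical_data} are precisely what reduce that action to the trivial $\sigma$-module $\mathcal{D}^\perp/\mathcal{D}$. Everything else is bookkeeping with parts (4)--(6) of Theorem~\ref{qdimthm}.
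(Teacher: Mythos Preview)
Your proposal is correct and follows essentially the same route as the paper: produce a non-$\widehat{\sigma}$-stable untwisted module when $\mathcal{C}\subsetneq\mathcal{C}^\perp$ to get quantum dimension $p$ over the orbifold, and in the self-dual case combine Theorem~\ref{thm:qdim_irred_twisted_mod} with parts (4)--(6) of Theorem~\ref{qdimthm}. The only notable difference is that you spell out steps (B) and (C) for the twisted sector---$\widehat{\sigma}$-stability of each irreducible $\widehat{\sigma}^s$-twisted module via the $\sigma$-equivariant identification $R_L^{\sigma^s}/(1-\sigma^s)L\cong\mathcal{D}^\perp/\mathcal{D}$ from Lemma~\ref{lem:radical_data}, and the decomposition into exactly $p$ pieces via vanishing of $H^2(\Z_p,\C^\times)$---whereas the paper simply asserts the $p$-piece decomposition, implicitly invoking the standard cyclic-orbifold fact (cf.\ \cite{DongRenXu15}) that a $g$-stable irreducible $g^i$-twisted module splits into $|\langle g\rangle|$ inequivalent irreducible $V^{\langle g\rangle}$-modules. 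Your more explicit treatment is a genuine improvement in exposition; the underlying argument is the same.
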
  

\begin{proof}  
For simplicity, we set 
$W = V_{L_{\mathcal{C}\times \mathcal{D}}}^{\langle \widehat{\sigma} \rangle}$. 
Suppose $\mathcal{C} \ne \mathcal{C}^\perp$ and take $\huto{u}\in (\Z^{p-1})^{d}$ 
such that $\overline{\huto{u}} \in \mathcal{C}^\perp - \mathcal{C}$. 
Then as shown in the proof of Lemma \ref{lem:radical_data} (1), we have 
$(1-\sigma)\overline{\huto{u}} \not\in \mathcal{C}$. 
Hence $\sigma(\beta(\huto{u},\huto{0}))-\beta(\huto{u},\huto{0}) 
\not\in L_{\mathcal{C}\times \mathcal{D}}$ and 
$V_{L_{\mathcal{C}\times \mathcal{D}}+\beta(\huto{u},\huto{0})}\circ \sigma 
\not\cong V_{L_{\mathcal{C}\times \mathcal{D}}+\beta(\huto{u},\huto{0})}$ as 
$V_{L_{\mathcal{C}\times \mathcal{D}}}$-modules. 
Then we see from Theorem \ref{qdimthm} (4), (5) and 
Theorem \ref{thm:rep_lattice_VOA} (2) 
that $V_{L_{\mathcal{C}\times \mathcal{D}}+\beta(\huto{u},\huto{0})}$ 
is an irreducible $W$-module with quantum dimension $p$, and in particular it is not a 
simple current. 
Thus (1) implies (2). 

Suppose $\mathcal{C} = \mathcal{C}^\perp$. 
Then 
$(L_{\mathcal{C}\times \mathcal{D}})^\circ = L_{\mathcal{C}\times \mathcal{D}^\perp}$. 
Since $\sigma$ acts on $l^d$ trivially, every irreducible 
$V_{L_{\mathcal{C}\times\mathcal{D}}}$-module 
is $\widehat{\sigma}$-stable and so it decomposes as a direct sum of $p$ irreduclbe 
$W$-modules. 
Then the quantum dimension of each irreducible component is $1$ by 
Theorem \ref{qdimthm} (6). 

By Theorem \ref{thm:qdim_irred_twisted_mod}, we see that every irreducible 
$\widehat{\sigma}^s$-twisted $V_{L_{\mathcal{C}\times\mathcal{D}}}$-module 
is of quantum dimension $1$. 
Each irreducible $\widehat{\sigma}^s$-twisted 
$V_{L_{\mathcal{C}\times\mathcal{D}}}$-module 
also decomposes as a direct sum of $p$ irreducible $W$-modules. Hence 
the quantum dimension of each irreducible component is $1$ by Theorem \ref{qdimthm} (6). 
Since any irreducible $W$-module is an irreducible component of an irreducible 
$V_{L_{\mathcal{C}\times\mathcal{D}}}$-module or 
an irreducible $\widehat{\sigma}^s$-twisted $V_{L_{\mathcal{C}\times\mathcal{D}}}$-module 
for some $1 \le s \le p-1$, we have shown that every irreducible $W$-module is a simple current. 
Thus (2) implies (1). The proof is complete.
\end{proof}

\subsection{Group structure of 
$\Irr(V_{L_{\mathcal{C} \times \mathcal{D}}}^{\langle \widehat{\sigma} \rangle})$}
\label{Sect5.2} 

In this section, using the results in \cite{EMS}, we determine the group structure of 
$\Irr(V_{L_{\mathcal{C} \times \mathcal{D}}}^{\langle \widehat{\sigma} \rangle})$ 
with respect to the fusion product. 

\begin{theorem}
Let $p$ be an odd prime, $d \in \Zplus$  
and $\sigma = r_1\cdots r_{p-1} \in \mfS_p$ as in Section \ref{Sect4.2}. 
Let $\mathcal{C}$ and $\mathcal{D}$ be even $\Z$-submodules of $k^d$ and $l^d$, 
respectively. 
Assume that $\mathcal{C}$ is $\sigma$-invariant. 
Let $r$ be the dimension of $D$ over $\Z_p$, that is, $|D| = p^r$. 
Let $\widehat{\sigma} \in \Aut (V_{L_{\mathcal{C}\times \mathcal{D}}})$ be a lift of $\sigma$ 
of order $p$. 
Suppose $\mathcal{C}= \mathcal{C}^\perp$ and further $3|d$ in the case $p=3$.  
Then $\Irr(V_{L_{\mathcal{C}\times \mathcal{D}}}^{\langle \widehat{\sigma} \rangle})$ 
forms an elementary abelian group of order $p^{d-2r+2}$ with respect to the fusion product. 
\end{theorem}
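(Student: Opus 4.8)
The plan is to identify $\Irr(V_{L_{\mathcal{C}\times\mathcal{D}}}^{\langle\widehat\sigma\rangle})$ as a quadratic space (a finite abelian group with a quadratic form coming from conformal weights mod $\Z$) and to compute its order. By Theorem \ref{simplecurrent}, the hypothesis $\mathcal{C}=\mathcal{C}^\perp$ guarantees a group-like fusion, so $\Irr(V_{L_{\mathcal{C}\times\mathcal{D}}}^{\langle\widehat\sigma\rangle})$ is a finite abelian group under fusion product; the results of \cite{EMS} on abelian intertwining algebras and orbifolds of holomorphic-type VOAs (more precisely, their description of the orbifold by a cyclic group of order $p$) express this group in terms of the data of $V=V_{L_{\mathcal{C}\times\mathcal{D}}}$ together with the action of $\widehat\sigma$. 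First I would recall the general structure: the irreducible $W$-modules ($W=V^{\langle\widehat\sigma\rangle}$) are parametrized by pairs consisting of an $\widehat\sigma^s$-twisted sector ($0\le s\le p-1$) together with a character recording the $\widehat\sigma$-eigenvalue on the relevant irreducible $V$-module or twisted module. Counting: the untwisted sector $s=0$ contributes $p\cdot|\mathrm{Irr}(V)^{\widehat\sigma}|$, and in our situation $\widehat\sigma$ acts trivially on $l^d$ so \emph{every} irreducible $V$-module $V_{L_{\mathcal{C}\times\mathcal{D}}+\lambda}$, $\lambda\in (L_{\mathcal{C}\times\mathcal D})^\circ/L_{\mathcal{C}\times\mathcal D}\cong\mathcal{C}^\perp/\mathcal{C}\times\mathcal{D}^\perp/\mathcal{D}=\mathcal{D}^\perp/\mathcal{D}$ (using $\mathcal C=\mathcal C^\perp$) is $\widehat\sigma$-stable, each splitting into $p$ irreducible $W$-modules; this gives $p\cdot|\mathcal{D}^\perp/\mathcal{D}|$ modules. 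For each $s\in\{1,\dots,p-1\}$, by the discussion after Theorem \ref{thm:qdim_irred_twisted_mod} there are $|R_{L_{\mathcal{C}\times\mathcal D}}^{\sigma^s}/(1-\sigma^s)L_{\mathcal{C}\times\mathcal D}|=|\mathcal{D}^\perp/\mathcal{D}|$ inequivalent irreducible $\widehat\sigma^s$-twisted modules, each $\widehat\sigma$-stable and splitting into $p$ irreducible $W$-modules. Hence the total is
\begin{equation*}
|\Irr(W)| = p\cdot p^{d-2r} + (p-1)\cdot p\cdot p^{d-2r} = p^2\cdot p^{d-2r} = p^{d-2r+2},
\end{equation*}
using $|\mathcal{D}^\perp/\mathcal{D}|=p^{d-2r}$, which follows from $|l^d|=p^d$, $|\mathcal D|=p^r$ and nondegeneracy of $\cdot_{-2}$.

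The second half of the argument is to show the group is elementary abelian, i.e.\ every element has order dividing $p$. Here I would use the structure theory of \cite{EMS}: the fusion group of the orbifold is an extension built from $\mathrm{Irr}(V)^{\widehat\sigma}\cong\mathcal D^\perp/\mathcal D$ (which is already elementary abelian of exponent $p$, being a $\Z_p$-vector space), the cyclic group $\Z_p$ of twisted sectors, and the $\Z_p$ of $\widehat\sigma$-eigenvalue labels; the only danger is a nontrivial extension producing an element of order $p^2$. The key point is that the associated quadratic/bilinear form on $\Irr(W)$ — given by the conformal weights modulo $\Z$, which take values in $\frac1p\Z/\Z$ — together with the fusion rules, is controlled: one checks the quadratic form is $\Z_p$-valued and nondegenerate on the appropriate quotients and that the extension data (the "discriminant form" in the language of \cite{EMS}) is that of a $\Z_p$-vector space. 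Concretely, fusing an $\widehat\sigma^s$-twisted simple current with an $\widehat\sigma^t$-twisted one lands in the $\widehat\sigma^{s+t}$-twisted sector, and the eigenvalue labels add in $\Z_p$; since $p$ is prime, $p$-fold fusion of any generator returns to the untwisted sector with trivial eigenvalue label and trivial $\mathcal D$-part, hence to $[W]$. I would assemble this by exhibiting generators: the classes of the $p$ components of $V_{L_{\mathcal{C}\times\mathcal D}}$ itself (eigenvalue labels, contributing one $\Z_p$ factor), the classes of components of $V_{L_{\mathcal{C}\times\mathcal D}+\lambda}$ for $\lambda$ ranging over a basis of $\mathcal D^\perp/\mathcal D$ (contributing $\Z_p^{d-2r}$), and the class of one component of an irreducible $\widehat\sigma$-twisted module (contributing the last $\Z_p$ factor); their product structure is that of $\Z_p^{d-2r+2}$.

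The main obstacle I anticipate is verifying that the extension is \emph{split} in the sense that no order-$p^2$ element arises — in particular, that a $p$-fold fusion of a twisted-sector simple current does not pick up a nontrivial class from $\mathcal D^\perp/\mathcal D$ or from the eigenvalue torsor. This is exactly where the hypothesis "$3\mid d$ in the case $p=3$" should enter: for $p=3$ the relevant quadratic form on the twisted sectors can be anisotropic and the conformal weight of the twisted module, $\rho(M(1)(\sigma^s))=\frac{1}{4p^2}\sum i(p-i)r_i$ with $r_i=d$, equals $\frac{d(p^2-1)}{24}$, whose value mod $\Z$ (and mod $\frac1p\Z$) is sensitive to $d\bmod 3$ when $p=3$; the divisibility condition ensures the twisted sector contributes genuine exponent-$p$ elements rather than forcing an order-$p^2$ element. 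I would make this precise by invoking the explicit orbifold fusion-rule computation of \cite{EMS} (their main theorem on $\Z_p$-orbifolds of lattice-type holomorphic VOAs, or the cyclic orbifold construction therein), checking that the cocycle describing the extension of $\Z_p$ (sectors) by $\Z_p\times(\mathcal D^\perp/\mathcal D)\times\Z_p$ (eigenvalue labels and internal labels) is a coboundary under the stated hypotheses, and concluding that $\Irr(V_{L_{\mathcal{C}\times\mathcal D}}^{\langle\widehat\sigma\rangle})\cong\Z_p^{\,d-2r+2}$.
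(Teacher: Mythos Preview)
Your counting argument for $|\Irr(W)|=p^{d-2r+2}$ is correct and essentially identical to the paper's.

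For the exponent-$p$ claim, however, you take a significantly more roundabout route than the paper, and the route you sketch has a genuine gap. You correctly identify the two key inputs from \cite{EMS}: the quadratic form $q(M)=\rho(M)+\Z$ on $\Irr(W)$ and the fact that the associated bilinear form $b(M,N)=q(M\boxtimes N)-q(M)-q(N)$ is nondegenerate. You also correctly observe that the conformal weights lie in $\frac{1}{p}\Z$ (modulo your minor slip: the twisted lowest weight is $\frac{d(p^2-1)}{24p}$, not $\frac{d(p^2-1)}{24}$; this is exactly why $24\mid(p^2-1)$ for primes $p>3$ and why $3\mid d$ is needed when $p=3$). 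But you then abandon these two facts and propose instead to analyze an extension of $\Z_p$ by $\Z_p\times(\mathcal D^\perp/\mathcal D)\times\Z_p$, exhibit explicit generators, and check that a certain $2$-cocycle is a coboundary. You yourself flag the ``main obstacle'' as verifying the extension splits, and your resolution is to defer to an unspecified computation in \cite{EMS}. That is not a proof; it is a plan to look something up.

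The paper's argument is a one-liner once the two inputs are in hand, and you had them already. Since every irreducible $W$-module has conformal weight in $\frac{1}{p}\Z$, the bilinear form $b$ takes values in $\frac{1}{p}\Z/\Z$. Hence for any $M,N\in\Irr(W)$,
\[
b(M^{\boxtimes p},N)=p\,b(M,N)\equiv 0\pmod{\Z},
\]
and nondegeneracy of $b$ forces $M^{\boxtimes p}\cong W$. No extension analysis, no cocycle check, no explicit generators are needed. The lesson: once you know the discriminant form is $\frac{1}{p}\Z/\Z$-valued and nondegenerate, exponent $p$ is immediate.
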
  

\begin{proof}
For simplicity, we set 
$W = V_{L_{\mathcal{C}\times \mathcal{D}}}^{\langle \widehat{\sigma} \rangle}$. 
Since $\mathcal{C}= \mathcal{C}^\perp$, Theorem \ref{simplecurrent} implies that 
$\Irr (W)$ is an abelian group with respect to the fusion product. 

Since $(L_{\mathcal{C}\times \mathcal{D}})^\circ= L_{\mathcal{C} \times \mathcal{D}^\perp}$, 
there are $|\mathcal{D}^\perp/\mathcal{D}| = p^{d-2r}$ inequivalent irreducible 
$V_{L_{\mathcal{C}\times \mathcal{D}}}$-modules, which are all $\widehat{\sigma}$-stable. 
Hence each of these  irreducible $V_{L_{\mathcal{C}\times \mathcal{D}}}$-modules 
decomposes as a direct sum of $p$ irreducible $W$-modules. 
Moreover, Lemma \ref{lem:radical_data} (2) implies that there are $p^{d-2r}$ inequivalent irreducible 
${\widehat{\sigma}}^s$-twisted $V_{L_{\mathcal{C}\times \mathcal{D}}}$-modules 
for each  $1 \leq s\leq p-1$. 
Each of these irreducible 
${\widehat{\sigma}}^s$-twisted $V_{L_{\mathcal{C}\times \mathcal{D}}}$-modules 
decomposes as a direct sum of $p$ irreducible $W$-modules. 
All of these irreducible $W$-modules are inequivalent to each other. 
Therefore, $|\Irr(W)| = p^{d-2r+2}$. 
All irreducible modules except $W$ have positive lowest conformal weights. 

Now, recall from \cite{EMS} that the set $\Irr(W)$ forms a quadratic space with the quadratic form 
$q$ defined by the lowest conformal weight modulo $\Z$; 
$q(M) = \rho(M) + \Z \in \Q/\Z$. 
Moreover, the associated bilinear form 
\begin{equation*}
b(M,N)=q(M\boxtimes N) -q(M)-q(N)
\end{equation*}
 is nondegenerate. 

Since $\mathcal{C}$ is even, \eqref{eq:w_d_equiv_mod_2} and 
\eqref{eq:inner_d_prod_mod_2} imply that    
the lowest conformal weight of any irreducible  
$V_{L_{\mathcal{C}\times\mathcal{D}}}$-module is in $(1/p)\Z$. 
Moreover, the lowest conformal weight of an irreducible $\widehat{\sigma}^s$-twisted 
$V_{L_{\mathcal{C}\times\mathcal{D}}}$-module is given by \eqref{tw}. 
Since $p$ is a prime and $\sigma$ is fixed-point-free on $\h$, 
we have $r_i = \dim \mathfrak{h}^{(i; \sigma^s)} = d$ for $1\leq i\leq p-1$. 
Note that $d$ is an even integer as $\mathcal{C}$ is self-dual. 
Thus the lowest conformal weight of an irreducible $\widehat{\sigma}^s$-twisted 
$V_{L_{\mathcal{C}\times\mathcal{D}}}$-module is 
\[
\frac{d}{4p^2}\sum_{i=1}^{p-1}{i(p-i)} = 
 \frac{d(p-1)(p+1)}{24p}. 
\]
Since $24|(p^2-1)$ if $p$ is a prime with $p> 3$, the conformal weights of irreducible 
$\widehat{\sigma}^s$-twisted $V_{L_{\mathcal{C}\times\mathcal{D}}}$-modules 
are in $\frac{1}p \Z$.
In the case $p=3$, we have $\frac{d}{9}\in\frac{1}{3}\Z$ as $3|d$ by our hypothesis. 

Then, for any $M, N\in \Irr (W)$, we have 
\[
b(M^{\boxtimes p}, N) \equiv p b(M,N) \equiv 0 \quad \pmod{\Z}.
\]
Hence, $M^{\boxtimes p}\cong W$ and $\Irr (W)$ is an elementary abelian $p$-group. 
The proof is complete.
\end{proof}

\appendix
\section{$2$-cocycles and alternating maps}

In this appendix, we show that the $\sigma$-invariant alternating $\Z$-bilinear maps $c$ and 
$c^\sigma$ defined by \eqref{eq:c_cs} for a fixed-point-free isometry $\sigma$ of order $p$ 
can be extended to a rational lattice when $p$ is odd. 
Let $(L, \langle \,\cdot\,,\,\cdot\,\rangle)$ be a positive definite rational lattice 
and $\sigma$ a fixed-point-free isometry of $L$ of order $p$, where $p \ge 3$ is an odd integer.
Define two $\sigma$-invariant $\Z$-bilinear maps 
$\check{\varepsilon}$, $\check{\varepsilon}^\sigma : L \times L \to \Q$ by
\begin{equation*}
\check{\varepsilon}(\alpha,\beta) 
= \frac{1}{2} \sum_{i=1}^{(p-1)/2} \langle \sigma^{i}(\alpha),\beta\rangle, \qquad
\check{\varepsilon}^\sigma(\alpha,\beta) 
= \frac{1}{p} \sum_{i=1}^{(p-1)/2} \langle i \sigma^{i}(\alpha),\beta\rangle
\end{equation*}
and two $\sigma$-invariant alternating $\Z$-bilinear maps 
$\check{c}$, $\check{c}^\sigma : L \times L \to \Q$ by
\begin{equation*}
\check{c}(\alpha, \beta) 
= \check{\varepsilon}(\alpha,\beta) - \check{\varepsilon}(\beta,\alpha), \qquad
\check{c}^\sigma(\alpha, \beta) 
= \check{\varepsilon}^\sigma(\alpha,\beta) - \check{\varepsilon}^\sigma(\beta,\alpha)
\end{equation*}
for $\alpha, \beta \in L$.

Since $\langle\sigma^i(\beta), \alpha\rangle = \langle \beta,\sigma^{p-i}(\alpha) \rangle 
= \langle \sigma^{p-i}(\alpha), \beta \rangle$ and since
\begin{equation*}
1 + \sigma + \cdots + \sigma^{(p-1)/2} = - (\sigma^{(p+1)/2} + \cdots + \sigma^{p-1})
\end{equation*}
as $\sigma$ is fixed-point-free of order $p$, we have
\begin{equation*}
\check{c}(\alpha, \beta) 
= \frac{1}{2}  \langle \alpha, \beta \rangle 
+ \sum_{i=1}^{(p-1)/2} \langle \sigma^{i}(\alpha),\beta\rangle.
\end{equation*}

Let   
\begin{equation*}
f_p(t) = \sum_{i=1}^{(p-1)/2} i(t^i-t^{p-i})\in\Z[t].
\end{equation*}
Then 
$\check{c}^{\sigma}(\alpha,\beta) = \frac{1}{p}\langle f_p(\sigma)(\alpha),\beta\rangle$. 
Since 
\begin{equation*}
f_p(t)
= \sum_{i=1}^{(p-1)/2} i t^i+\sum_{i=(p+1)/2}^{p-1} (i-p) t^i
= \sum_{i=1}^{p-1} i t^i - p \sum_{i=(p+1)/2}^{p-1} t^i,
\end{equation*}
we have
\begin{equation*}
\check{c}^{\sigma}(\alpha,\beta) 
= \frac{1}{p}\sum_{i=1}^{p-1} \langle i \sigma^i(\alpha), \beta \rangle 
- \sum_{i=(p+1)/2}^{p-1} \langle \sigma^i(\alpha), \beta \rangle.
\end{equation*}

Take $s \in 2\Zplus$ such that $s \langle L, L \rangle \subset 2p\Z$ and define    
$\sigma$-invariant $\Z$-bilinear maps
$\varepsilon$, $\varepsilon^\sigma$, $c$, $c^\sigma : L \times L \to \Z_s$ by
\begin{align*}
\varepsilon(\alpha,\beta) &= s \check{\varepsilon}(\alpha,\beta) + s\Z, & \qquad 
\varepsilon^\sigma(\alpha,\beta) &= s \check{\varepsilon}^\sigma(\alpha,\beta) + s\Z,\\
c(\alpha,\beta) &= s \check{c}(\alpha,\beta) + s\Z, & \qquad 
c^\sigma(\alpha,\beta) &= s \check{c}^\sigma(\alpha,\beta) + s\Z
\end{align*}
for $\alpha, \beta \in L$. 
Then
\begin{equation*}
c(\alpha, \beta) 
= \varepsilon(\alpha,\beta) - \varepsilon(\beta,\alpha), \qquad
c^\sigma(\alpha, \beta) 
= \varepsilon^\sigma(\alpha,\beta) - \varepsilon^\sigma(\beta,\alpha).
\end{equation*}
In particular, $c$ and $c^\sigma$ are alternating. Moreover, $\varepsilon$ and $\varepsilon^\sigma$ 
are $2$-cocycles, for they are $\Z$-bilinear.

If $L$ is even, then $\langle \sigma^i(\alpha), \beta \rangle \in \Z$ for $\alpha, \beta \in L$ 
and so the $\sigma$-invariant 
alternating $\Z$-bilinear maps $c$ and $c^\sigma$ 
agree with the ones given in \cite[Remark 2.2]{DongLepowsky96} 
(see \cite[(4.1)]{Lepowsky85} also). 
Therefore, if we take $L = (N^\circ)^d$  and define a $\sigma^s$-invariant alternationg 
$\Z$-bilinear map on $(N^\circ)^d$ as above, then its restriction to 
$L_{\mathcal{C}\times \mathcal{D}}$ coincides with $c^{\sigma^s}$ defined by \eqref{eq:c-sigma-s}.

Similar $2$-cocycles and $\sigma$-invariant alternating $\Z$-bilinear maps were considered 
in \cite{ChenLam16} and \cite{TanabeYamada13} for the case $L$ is the dual lattice of 
$\sqrt{2}A_2$ and $p = 3$.

\end{document}